\numberwithin{equation}{section}
\newcommand{\A}{\mathscr{A}}
\newcommand{\C}{\mathcal{C}}
\newcommand{\F}{\mathcal{F}}
\newcommand{\G}{\mathscr{G}}
\newcommand{\K}{\mathscr{K}}
\renewcommand{\L}{\mathcal{L}}
\newcommand{\M}{\mathscr{M}}
\newcommand{\N}{\mathbb{N}}
\renewcommand{\P}{\mathscr{P}}
\newcommand{\Q}{\mathbb{Q}}
\newcommand{\R}{\mathbb{R}}
\renewcommand{\S}{\mathcal{S}}
\newcommand{\T}{\mathcal{T}}
\newcommand{\Z}{\mathbb{Z}}
\newcommand{\loc}{{\rm loc}}
\newcommand{\bequ}{\begin{equation}}
\newcommand{\nequ}{\end{equation}}
\newcommand{\BB}{\mathcal{B}}
\newcommand{\CC}{\mathscr{C}}
\newcommand{\LL}{\mathscr{L}}
\newcommand{\QQ}{\mathcal{Q}}
\newcommand{\XX}{\mathcal{X}}
\newcommand{\GG}{\mathcal{G}}
\newcommand*{\longhookrightarrow}{\ensuremath{\lhook\joinrel\relbar\joinrel\rightarrow}}
\DeclareMathOperator{\Per}{Per}
\DeclareMathOperator{\dist}{dist}
\theoremstyle{plain}
\newtheorem{definition}{Definition}[section]
\newtheorem{theorem}[definition]{Theorem}
\newtheorem{proposition}[definition]{Proposition}
\newtheorem{lemma}[definition]{Lemma}
\newtheorem{corollary}[definition]{Corollary}
\theoremstyle{definition}
\newtheorem{remark}[definition]{Remark}
\renewcommand{\le}{\leqslant}
\renewcommand{\ge}{\geqslant}
\begin{document}

\title[Interfaces in long-range Ising models]{Planelike interfaces in long-range Ising models \\ and connections
with nonlocal minimal surfaces}

\author{Matteo Cozzi,
Serena Dipierro, 
and Enrico Valdinoci}

\address{{\em Matteo Cozzi:} Departament 
de Matem\`atiques, Universitat Polit\`ecnica
de Catalunya, Diagonal 647, E-08028 Barcelona, Spain}
\email{matteo.cozzi@upc.edu}

\address{{\em Serena Dipierro:} School of Mathematics and Statistics,
University of Melbourne, 813 Swanston St, Parkville VIC 3010, Australia,
and
School of Mathematics and Statistics,
University of Western Australia,
35 Stirling Highway,
Crawley, Perth
WA 6009, Australia}
\email{sdipierro@unimelb.edu.au}

\address{{\em Enrico Valdinoci:} School of Mathematics and Statistics,
University of Melbourne, 813 Swanston St, Parkville VIC 3010, Australia,
School of Mathematics and Statistics,
University of Western Australia,
35 Stirling Highway,
Crawley, Perth
WA 6009, Australia,
Weierstra{\ss}-Institut f\"ur Angewandte
Analysis und Stochastik, Hausvogteiplatz 5/7, 10117 Berlin, Germany, and
Dipartimento di Matematica, Universit\`a degli studi di Milano,
Via Saldini 50, 20133 Milan, Italy}
\email{enrico@mat.uniroma3.it} 

\begin{abstract}
This paper contains three types of results:
\begin{itemize}
\item the construction of ground state solutions for a long-range
Ising model whose interfaces stay at a bounded distance from any
given hyperplane,
\item the construction of nonlocal minimal surfaces
which stay at a bounded distance from any
given hyperplane,
\item the reciprocal approximation of ground states
for long-range
Ising models and nonlocal minimal surfaces.
\end{itemize} 
In particular, we establish the existence of ground state
solutions for long-range Ising models
with planelike interfaces, which possess scale invariant properties
with respect to the periodicity size of the environment.
The range of interaction of the Hamiltonian is
not necessarily assumed to be
finite and also polynomial tails are taken into account
(i.e. particles can interact even if they are very far apart the one from the other).

In addition, we provide a rigorous bridge between
the theory of long-range Ising models and that of nonlocal minimal surfaces,
via some precise limit result.
\end{abstract}

\keywords{Planelike minimizers, phase transitions, spin models, Ising models,
long-range interactions, nonlocal minimal surfaces.}
\subjclass[2010]{82C20, 82B05, 35R11.}

\maketitle

\setcounter{tocdepth}{1}
\tableofcontents

\section{Introduction}

A very active field of research in mathematical physics
focuses
on a better understanding of magnetism and its
relation to phase transitions.

Roughly speaking, the magnetization of a material occurs
when, at a large scale, a large number of electrons
has the tendency to align their spin in the same direction.
The phenomenon for which these spins align, thus producing locally a net magnetic
moment which can be macroscopically measured, is called in jargon
ferromagnetism (the converse phenomenon in which spins have the
tendency to align in opposite directions, thus producing locally a vanishing
net magnetic moment,
is called antiferromagnetism).\medskip

A simple, but effective, model to study
the phenomenon of ferromagnetism
was introduced by W. Lenz and his student E. Ising
(see \cite{L20, I25}) and can be 
described as follows.
One considers a lattice, say $\Z^{d}$ for the sake of simplicity,
and assumes that 
a spin $u_i\in \{-1,+1\}$
can be associated to any element $i$
of the lattice.

The medium is immersed into an external magnetic field,
which, at any point $i$ of the lattice, has intensity equal
to $h_i$.
The sign of $h_i$ influences the spin $u_i$ of the site $i$:
namely, the energy associated to the external magnetic
field can be written (up to dimensional constants) as
\begin{equation}\label{ENERGY1}
E_{\rm ext} := \sum_{i\in\Z^d} h_i \,u_i.\end{equation}
In this sense, ground state solutions (i.e. minimizers)
would have the tendency to align their spin in dependence of the external
magnetic field, that is, for our sign convention,
in order
to make $ E_{\rm ext}$ as small
as possible, $u_i$ would be inclined to be equal to $-1$ whenever $h_i>0$
and equal to $+1$ whenever $h_i<0$.\medskip

In this framework,
the external magnetic fields with zero average
cannot, in general, be responsible
for the formation of large regions 
in which spins align (the so-called
Weiss magnetic domains). Hence, in order
to model spontaneous
magnetization,
one has to suppose that there is some type of interaction
among sites. This interaction between the sites $i$ and $j\in\Z^d$
is taken to be equal to $J_{ij}$ 
and the corresponding internal energy is given by
\begin{equation}\label{ENERGY2}
E_{\rm int}:= -\sum_{i,j\in\Z^d} J_{ij} \,u_i\,u_j.\end{equation}
If $J_{ij}$ is positive, then, to minimize the internal energy,
the ground states will have the tendency to mutually align their spins
(in this way, the product $J_{ij} \,u_i\,u_j$
would be positive and the energy lower).
This sign assumption of $J_{ij}$ is the one
called in the literature as ferromagnetic (the opposite
sign of $J_{ij}$ is called antiferromagnetic).
\medskip

In a sense, the ferromagnetic (or antiferromagnetic) behavior of a material 
can be seen, at a microscopic scale, as the combined effect of the Pauli 
Exclusion Principle and the Coulomb repulsion between electrons. Indeed, if 
two electrons have different spins, then they can occupy the same orbital. 
This allows the electrons to be closer to each other, thus having a stronger 
Coulomb repulsion. Viceversa,
if two electrons have the same spin, then they must
occupy different orbitals, thus reducing the Coulomb repulsion.
The description of the ferromagnetic or antiferromagnetic
behaviors of the different materials in terms
of their atomic distance is depicted by the so-called
Bethe-Slater curve (see e.g. page 125 in \cite{C97}):
the elements which lie on this curve
above the horizontal axis are ferromagnetic
and the ones below are antiferromagnetic
(for instance, iron, whose magnetic properties
depend
on its crystal structure, is usually located
in the vicinity of the meeting point of the Bethe-Slater curve
with the horizontal axis).
\medskip

In our setting, the total energy of the system is then the superposition
of the external energy $E_{\rm ext}$ produced by the magnetic
field and the internal energy $E_{\rm int}$ due to particle interactions:
then, in light of
the discussions above, we know that the ground states
have two types of tendencies:
\begin{itemize}
\item on the one hand, 
they are influenced by the magnetic field, and try to align their spin in
dependence of it as much
as possible,
\item on the other hand, 
each site is influenced by the others, and this interaction tries
to maintain
the spins aligned as much as possible.\end{itemize}
It is conceivable to imagine that, for magnetic fields with zero
average, on a large scale, the first of this tendency would
average out as well, 
and the particle interaction would then 
produce macroscopic regions
with aligned spins, in such a way to minimize the overall energy.
We refer to~\cite{G99} and \cite{R99}
for more exhaustive discussions on these topics.\medskip

In the model considered, a natural phenomenon to take into account
is the possibility of a phase transition -- and, in fact, two types of related,
but conceptually different, transitions must be carefully analyzed.
The first phase transition is mostly related to the formation
of large regions in which spins are aligned: at high temperature,
the interaction between sites becomes relatively
lower, due to thermal
fluctuations, and this phenomenon may
eliminate the spin alignment; conversely,
at low temperature, large regions
with the same spins may spontaneously arise. The detection of
this phenomenon and of the associated critical temperature is
the core of the study of this type of phase transition
(namely, of the transition related to spontaneous magnetization
in ferromagnetic materials in dependence of the temperature,
which in turn corresponds to a transition between
ordered and disordered organization of the substratum,
see \cite{P36, O44}).\medskip

A second type of phase transition -- or, better to say,
phase coexistence -- deals with the study of the ferromagnetic regions.
This study focuses on the analysis of the interfaces
between the regions with different spins, and this is the point of
view adopted in this paper. This type of phase transitions
can be, in our opinion, very efficiently studied
in view of the limit case
in which the lattice
approaches a continuous medium. In such a limit, the statistical
mechanics of the site is well approximated by partial differential
equations, the ferromagnetic effects 
become related to the fact that the associated equations
are (at least in some sense) elliptic,
and the interface between regions of different spins
can be better understood, at a large scale, 
from the perspective of (hyper)surfaces
which minimize some sort of perimeter functional
(and the
goal of this paper is exactly to formalize such heuristic
discussions).\medskip

Remarkably, the study of
this type of problems also provides a natural bridge between
different subjects. On the one hand, given the analytic
difficulties created by the model (especially in high dimension),
systems like the one discussed here
naturally led to the development of suitable Monte Carlo methods
and appropriate algorithms for efficient
numerical simulations (see e.g. \cite{NB99}).

Furthermore, models of this type naturally arise in other contexts.
Besides magnetization, the model describes spin glasses
(in which ferromagnetic and antiferromagnetic behaviors may also
occur randomly), see e.g. \cite{WSAD90}.

Other applications to this model
are related to lattice gas, in which each site may be either
occupied by an atom of the gas (which would correspond, in the 
discussion above, e.g. to the case $u_i=+1$) or it could be empty
(which would then
be the case $u_i=-1$). In this setting,
the ferromagnetic property corresponds to an attractive interaction
between atoms.

Similar models also arise in biology
to describe binding cellular and DNA behaviors,
and to model the activity (say, corresponding to $u_i=+1$)
or inactivity ($u_i=-1$) of neurons in a network,
see e.g. \cite{DB99, BBNPSMB10, T07, H82}.
\medskip

In this sense, the model that
we discuss in this paper provides
a nice simplification of reality\footnote{As a historical remark,
we also observe that the idea that
simple discrete models at the atomic scale could lead to
qualitative macroscopic modifications
may go back, in its
embryonic stages, at least to Democritus,
who is alleged to claim that 
``by convention sweet is sweet, bitter is bitter, 
hot is hot, cold is cold, color is color; 
but in truth there are only atoms and the void'', see \cite{D39}.}
which is accurate enough to detect
interesting and important phenomena, since the
basic microscopic interactions add up and
exhibit complex macroscopic effects. That is, the model is simple
enough to allow a rigorous mathematical study, but it is also rich enough
to allow the formation of complicated patterns of interfaces and phase transitions.
\medskip

In this paper, we consider an Ising model
whose Hamiltonian is obtained by the superposition of an energy of
ferromagnetic type and a magnetic potential, as described in \eqref{ENERGY1}
and \eqref{ENERGY2}, and we look
for the equilibria of
a discrete set of
variables that represent magnetic dipole 
moments of atomic spins
that can be in one of two states (which we denote by~$+1$ or~$-1$).
These spins are arranged in a $d$-dimensional
lattice that we take to be~$\Z^d$, with~$d\ge2$.
Here, we consider the case in which the Hamiltonian depends periodically
on the environment, that is, given~$\tau\in\N$, both the ferromagnetic
and the magnetic energy are invariant under integer translations
of length~$\tau$. Of course, this type of periodicity assumption
is very common in the statistical mechanics literature,
especially in view of applications to crystals.

Differently from most of the existing literature, we take into account
the possibility
that the particle interaction is not finite-range, but it possesses
a tail at infinity (in particular, tails with polynomial decays are
taken into consideration).
\medskip

We show that, if the magnetic potential averages to zero
in the fundamental domain of such crystal, one can construct ground states
in which the interface remains
uniformly close to any given hyperplane.
More precisely, fixed any hyperplane, we construct minimal interfaces
that stay at a distance from the hyperplane of the same order of the 
periodicity size of the model.

We stress that the vicinity to the prescribed hyperplane is uniform in 
the whole of the space and that the hyperplane can have rational or 
irrational slope (the corresponding solutions will then have 
accordingly periodic and
quasiperiodic features).
\medskip

Of course, the fact that the oscillation of the interface is proved to be 
of the same order of the crystalline scale has clear physical relevance.

Furthermore, it provides an additional scale invariance that we can use
to take suitable limits of the solution constructed.\medskip

More precisely, we will show that, if we scale
appropriately the planelike 
ground states 
of the Ising model, we obtain in the limit a minimal solution
for a nonlocal perimeter functional which has been intensively studied
in the recent literature (in particular, in this way
we show that there exist planelike nonlocal minimal surfaces).
\medskip

To make the picture complete, we also show that any unique minimizer 
of the nonlocal perimeter problem can be approximated by ground states
of the Ising model, thus providing a complete bridge between
the long-range statistical mechanics framework
and the geometric measure theory in nonlocal setting.\medskip

We recall that the construction of planelike solutions is
a classical topic in several areas of pure and applied mathematics.
This problem dates back, at least, to the construction of
planelike geodesics on surfaces of genus greater than one, see~\cite{M24}.
As pointed out in~\cite{H32},
geodesics in higher dimensional manifolds fail, in general,
to satisfy planelike conditions. Hence, the question
of finding planelike solutions eventually led to the generalization
of the notion of ``orbits'' with that of ``invariant measures'' in dynamical systems,
which in turn
gave a fundamental contribution to the birth of the Aubry-Mather (or weak KAM) theory,
see~\cite{AD83, M89, M91}. 

In addition, in \cite{M86}
the problem of finding suitable planelike solutions
was put in a new framework for elliptic partial differential
equations, where the question of finding suitable analogues
for hypersurfaces of minimal perimeter was also posed.

In turn, this question for minimal surfaces was successfully addressed
in \cite{CdlL01, AB01}. 
\medskip

See also \cite{CF96, RS04, V04, PV05, B08} for related results for
elliptic partial differential equations,
\cite{T04, BV08} for additional results
in Riemannian and sub-Riemannian settings,
\cite{CdlL05, dlLV07, dlLV10}
for results in statistical mechanics,
and \cite{CV15, CV16} for results for fractional equations.
\medskip

We now introduce the formal mathematical settings in which we work.
Let~$d \in \N$ with~$d \ge 2$. We endow~$\Z^d$ (and, more generally,~$\Q^d$) with its natural~$\ell^1$ norm, that will be simply denoted by~$| \cdot |$. For simplicity of exposition and rather uncharacteristically, we adopt this notation even for vectors in~$\R^d$. Thus, we write
$$
|i| = |i|_1 := \sum_{n = 1}^d |i_n| \quad \mbox{for any } i \in \R^d.
$$
Of course, for the vast majority of the arguments a different norm of~$\R^d$ could be considered as well, with no significant changes in the computations.

We call any function~$u: \Z^d \to \{ -1, 1 \}$ a~\emph{configuration}. 
Associated to any configuration~$u$ is its interface~$\partial u \subset \Z^d$ defined by
$$
\partial u := \Big\{ i \in \Z^d : u_i = 1 \mbox{ and there exists } j \in \Z^d \mbox{ such that } |i - j| = 1 \mbox{ and } u_j = -1 \Big\}.
$$

Given a configuration~$u$, we consider its (formal) Hamiltonian
$$
H(u) := \sum_{i, j \in \Z^d} J_{i j} \left( 1 - u_i u_j \right) + \sum_{i \in \Z^d} h_i u_i,
$$
where~$J : \Z^d \times \Z^d \to [0, +\infty)$ satisfies
\begin{align}
\label{Jsymm} J_{i j} = J_{j i} \quad & \mbox{for any } i, j \in \Z^d, \\
\label{Jzero} J_{i i} = 0 \quad & \mbox{for any } i \in \Z^d, \\
\label{Jferro} J_{i j} \ge \lambda \quad & \mbox{for any } i, j \in \Z^d \mbox{ such that } |i - j| = 1, \\
\label{Jfinite} \sum_{j \in \Z^d} J_{i j} \le \Lambda \quad & \mbox{for any } i \in \Z^d,
\end{align}
for some~$\Lambda \ge \lambda > 0$, while~$h: \Z^d \to \R$ is such that
\begin{align}
\label{hsup} \sup_{i \in \Z^d} |h_i| & \le \mu, \\
\label{hflux0} \sum_{i \in F} h_i & = 0,
\end{align}
for some~$\mu > 0$ and with~$F$ denoting any fundamental domain of the quotient space~$\Z^d / \tau \Z^d$, with~$\tau\in\N$.

We observe that the Hamiltonian $H$ is simply the sum of the external
magnetic energy and
the internal exchange interaction energy introduced in \eqref{ENERGY1} and \eqref{ENERGY2}
(plus formally a constant term).\medskip

Sometimes, we will require~$J$ to fulfill the following stronger assumption, in place of~\eqref{Jferro} and~\eqref{Jfinite}:
\begin{equation} \label{Jpowerlike}
\frac{\lambda}{|i - j|^{d + s}} \le J_{i j} \le \frac{\Lambda}{|i - j|^{d + s}} \quad \mbox{for any } i, j \in \Z^d \mbox{ with } i \ne j \mbox{ and for some } s \in (0, 1).
\end{equation}

We point out that long-range Ising models like the ones described by the above requirements are well-studied in the literature (see for instance~\cite{DRAW02, CDR09, P12, BPR13} and references therein), with particular attention given to those taking into account~\emph{power-like} interactions as in~\eqref{Jpowerlike}. The array of models covered by our choice of parameters (namely,~$s \in (0, 1)$) falls into the class of the so-called~\emph{weak} long-range interactions. Anyway, we stress that a wider generality (e.g. the case of~\eqref{Jpowerlike} with~$s \ge 1$) is already encompassed within the broader framework of hypotheses~\eqref{Jferro} and~\eqref{Jfinite}.

\smallskip

The periodicity of the medium is modeled by requiring that, given~$\tau \in \N$,
\begin{align}
\label{Jper} J_{i j} = J_{i' j'} \quad & \mbox{for any } i, j, i', j' \in \Z^d \mbox{ such that } i - i' = j - j' \in \tau \Z^d, \\
\label{hper} h_i = h_{i'} \quad & \mbox{for any } i, i' \in \Z^d \mbox{ such that } i - i' \in \tau \Z^d.
\end{align}

Associated to the interaction kernel~$J$, we consider the non-increasing function
\begin{equation} \label{sigmadef}
\sigma(R) := \sup_{i \in \Z^d} \sum_{ \substack{j \in \Z^d\\ |j - i|_\infty \ge R } } J_{i j},
\end{equation}
defined for any~$R \in \N$. Note that we indicate with~$| \cdot |_\infty$ the~$\ell^\infty$ norm in~$\Z^d$ and~$\R^d$, that is
\begin{equation}\label{1.10bis}
|i|_\infty := \sup_{n = 1, \ldots, d} |i_n| \quad \mbox{for any } i \in \R^d.
\end{equation}
Observe that~$\sigma$ quantifies the decay of the tails of~$J$.

Given a set~$\Gamma \subset \Z^d$, we introduce the restricted Hamiltonian~$H_\Gamma$, defined on any configuration~$u$ by
\begin{align*}
H_\Gamma(u) := & \, \sum_{(i, j) \in \Z^{2 d} \setminus (\Z^d \setminus \Gamma)^2} J_{i j} (1 - u_i u_j) + \sum_{i \in \Gamma} h_i u_i \\
= & \, \sum_{i \in \Gamma, \, j \in \Gamma} J_{i j} (1 - u_i u_j) + 2 \sum_{i \in \Gamma, \, j \in \Z^d \setminus \Gamma} J_{i j} (1 - u_i u_j) + \sum_{i \in \Gamma} h_i u_i.
\end{align*}
Note that~$H_\Gamma(u)$ is always well-defined when~$\Gamma$ is a finite set, as~\eqref{Jfinite} is in force.

It will be useful to have a notation for the interaction energy involving two subsets of~$\Z^d$. Given any two sets~$\Gamma, \Omega \subseteq \Z^d$, we consider the restricted interaction term
\begin{equation} \label{Idef}
I_{\Gamma, \Omega}(u) := \sum_{i \in \Gamma, \, j \in \Omega} J_{i j} (1 - u_i u_j).
\end{equation}
We also write
$$
I_\Gamma(u) := I_{\Gamma, \Gamma}(u) + I_{\Gamma, \, \Z^d \setminus \Gamma}(u) + I_{\Z^d \setminus \Gamma, \, \Gamma}(u).
$$
On the other hand, we indicate with~$B_\Gamma$ the part of the Hamiltonian~$H_\Gamma$ related to the magnetic field~$h$. That is,
\begin{equation} \label{Bdef}
B_\Gamma(u) := \sum_{i \in \Gamma} h_i u_i.
\end{equation}
With these notations, it holds that
$$
H_\Gamma(u) = I_\Gamma(u) + B_\Gamma(u).
$$

\begin{definition}
We say that a configuration~$u$ is a~\emph{minimizer} for~$H$ in a set~$\Gamma \subseteq \Z^d$ if it satisfies
$$
H_\Gamma(u) \le H_\Gamma(v),
$$
for any configuration~$v$ that agrees with~$u$ outside of~$\Gamma$.
\end{definition}

\begin{remark} \label{minincrmk}
We point out that, although perhaps not immediately evident from the way the interaction term~$I$ is defined, the definition of minimizer is consistent with set inclusion. With this we mean that, given two sets~$\Gamma \subseteq \Omega$, a minimizer in~$\Omega$ is also a minimizer in~$\Gamma$.

To see this, it suffices to observe that, if~$u$ and~$v$ are two configurations satisfying
$$
u_i = v_i \quad \mbox{for any } i \in \Z^d \setminus \Gamma,
$$
then
$$
H_\Omega(u) - H_\Omega(v) = H_\Gamma(u) - H_\Gamma(v).
$$
Of course, it is easy to check that such an identity is true for the magnetic term~$B$. On the other hand, the computation of the interaction term is slightly more involved, due to the presence of a double summation. However, it becomes more apparent once one notices that~$\left[ \Z^{2 d} \setminus (\Z^d \setminus \Gamma)^2 \right] \subseteq \left[ \Z^{2 d} \setminus (\Z^d \setminus \Omega)^2 \right]$ and
$$
u_i u_j = v_i v_j \quad \mbox{for any } (i, j) \in \left[ \Z^{2 d} \setminus (\Z^d \setminus \Omega)^2 \right] \setminus \left[ \Z^{2 d} \setminus (\Z^d \setminus \Gamma)^2 \right].
$$
\end{remark}

\begin{definition}\label{def:ground}
We say that a configuration~$u$ is a~\emph{ground state} for~$H$ if it is a minimizer for~$H$ in any finite set~$\Gamma \subset \Z^d$.
\end{definition}

With this setting, we are in the position of stating our
first result, which provides the existence of ground state solutions
for long-range Ising models with interfaces that remain at a bounded distance
from a given hyperplane (and, additionally, if~$J$ satisfies~\eqref{Jpowerlike},
such distance is of the same order of the size of periodicity of the medium):

\begin{theorem} \label{mainthm}
Suppose that~$J$ and~$h$ satisfy assumptions~\eqref{Jsymm},~\eqref{Jzero},~\eqref{Jferro},~\eqref{Jfinite},~\eqref{Jper} 
and~\eqref{hsup}~\eqref{hflux0},~\eqref{hper}, respectively.
Then, there exist a small constant~$\mu_0 > 0$, depending only on~$d$,~$\tau$ and~$\lambda$, and a large constant~$M > 0$, 
that may also depend on~$\Lambda$ and the function~$\sigma$, for which, given any direction~$\omega \in \R^d \setminus \{ 0 \}$, 
we can find a ground state~$u_\omega$ for~$H$ such that its interface~$\partial u_\omega$ satisfies the inclusion
\begin{equation} \label{interinc}
\partial u_\omega \subset \left\{ i \in \Z^d : 
\frac{\omega}{|\omega|} \cdot i \in [0, M] \right\},
\end{equation}
provided that~$\mu \le \mu_0$.

More precisely, for any~$i\in\Z^d$ with~$\frac{\omega}{|\omega|}\cdot i\ge M$ we have that~$u_{\omega,i}=-1$, 
and for any~$i\in\Z^d$ with~$\frac{\omega}{|\omega|}\cdot i\le 0$ we have that~$u_{\omega,i}=1$.

Furthermore, if~$J$ satisfies~\eqref{Jpowerlike}, in addition to the conditions already specified, 
and~$h$ vanishes identically, then the constant~$M$ may be chosen 
of the form
\begin{equation}\label{OTTIMA}
M = M_0 \tau,\end{equation} with~$M_0 > 0$ 
depending only on~$d$,~$s$,~$\lambda$ and~$\Lambda$.
\end{theorem}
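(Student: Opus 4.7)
The plan is to follow the Caffarelli--de la Llave strategy for planelike minimizers, adapted to the discrete long-range setting, in three blocks.

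\emph{Constrained minimizers in a slab.} First, for a rational direction $\omega$, fix a candidate slab width $M>0$ (to be determined) and let $\mathcal{A}_M^\omega$ be the class of configurations $v$ satisfying $v_i = +1$ when $\omega \cdot i/|\omega| \le 0$ and $v_i = -1$ when $\omega \cdot i/|\omega| \ge M$. The rationality of $\omega$ makes this class invariant under a $(d-1)$-dimensional lattice of translations orthogonal to $\omega$, and compatible with the $\tau\Z^d$-periodicity of $J$ and $h$ by \eqref{Jper}--\eqref{hper}. On a large box $\Gamma_R$ periodic in these orthogonal directions, there are only finitely many configurations in $\mathcal{A}_M^\omega$, so a minimizer $u^{(R)}$ of $H_{\Gamma_R}$ restricted to $\mathcal{A}_M^\omega$ exists; a diagonal extraction as $R\to\infty$ yields a configuration $u_\omega$ that minimizes $H_\Gamma$ among perturbations supported in any finite $\Gamma$ contained in the slab.

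\emph{Confinement of the interface (the heart of the proof).} The key step is to show that, once $M$ is chosen large enough (in terms of $d,\tau,\lambda,\Lambda,\sigma$) and $\mu\le\mu_0$, the interface of $u_\omega$ stays strictly inside the slab. Assume by contradiction that $u_{\omega,i_0}=+1$ at some $i_0$ close to the upper boundary $\{\omega\cdot i/|\omega|\approx M\}$. One then constructs a competitor $\tilde u$, obtained most cleanly by replacing $u_\omega$ by a sharp step function equal to $+1$ below a hyperplane $\omega\cdot i/|\omega|=M-\eta$ and $-1$ above it. The resulting energy difference decomposes into three contributions: a negative ferromagnetic gain of order $-\lambda N$ coming from nearest-neighbor bonds across the cut (where $N$ is the cross-section of the cut), using \eqref{Jferro}; a positive long-range tail cost of order $\sigma(M/2)\,N$ coming from interactions between the flipped region and the far $-1$ region, controlled by \eqref{sigmadef} and tending to zero as $M\to\infty$ thanks to \eqref{Jfinite}; and a magnetic contribution bounded by $\mu$ times the volume of the modification, which by \eqref{hflux0} applied on entire fundamental domains reduces to a boundary error of order $\mu\,\tau^{d-1}$. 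Choosing $M$ large enough so that $\sigma(M/2)\ll\lambda$, and $\mu_0$ small in terms of $\lambda,d,\tau$, makes the net change strictly negative, contradicting the minimality of $u_\omega$. Therefore the interface is strictly inside the slab, the constraint is inactive, and $u_\omega$ is a genuine ground state in the sense of Definition \ref{def:ground}. A Birkhoff-type comparability lemma for integer translates $u_\omega(\cdot+k)$, based on the fact that $\max(u,v)$ and $\min(u,v)$ of two minimizers are themselves minimizers (a standard ferromagnetic rearrangement), is required to propagate this confinement uniformly across $\Z^d$.

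\emph{Irrational directions and scaling.} For an irrational $\omega$, approximate by rational directions $\omega_k\to\omega$; since $\mu_0$ and $M$ are direction-independent, pointwise compactness of $\{u_{\omega_k}\}$ yields a limit configuration $u_\omega$ whose interface is still contained in a slab of width $M$ orthogonal to $\omega$, and the ground-state property passes to the limit thanks to \eqref{Jfinite} and the decay of $\sigma$ in \eqref{sigmadef}. Finally, the refinement $M=M_0\tau$ under \eqref{Jpowerlike} with $h\equiv0$ follows by scaling: the kernel $|i-j|^{-(d+s)}$ is exactly homogeneous, so the dilation $i\mapsto\tau i$ conjugates the $\tau\Z^d$-periodic problem to a $\Z^d$-periodic one (up to a global constant multiplying $H$ that does not affect the minimizers), transporting the slab bound $M_0$ obtained at $\tau=1$ into $M_0\tau$ for general $\tau$.

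\emph{Main obstacle.} The critical point is the isoperimetric-type balance in the confinement step: because interactions reach arbitrarily far, a local change of configuration has genuinely nonlocal energy cost, and one must ensure that the ferromagnetic surface gain along the cut still beats the long-range tail cost encoded by $\sigma$. This is also the step where the required size of $M$ becomes quantitative and where the pure power-law behavior yields the sharp scaling $M=M_0\tau$.
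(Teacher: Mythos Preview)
Your outline follows the right Caffarelli--de la Llave architecture, but two of the three blocks have genuine gaps.

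\textbf{The confinement step.} The energy balance you describe for the sharp-step competitor does not work as written. Replacing $u_\omega$ by a flat step at height $M-\eta$ \emph{creates} a full interface of cross-section $N$, costing at least of order $\lambda N$ in nearest-neighbor bonds; there is no ``negative ferromagnetic gain of order $-\lambda N$'' from this operation. If $u_\omega$ were itself already close to a flat step near the top of the slab (which is exactly what Birkhoff forces it to look like), the two interfaces have essentially the same ferromagnetic cost and your comparison is inconclusive. The paper's mechanism is different: it proves an \emph{a priori} upper bound $H_{Q_\ell}(u)\le C\ell^{d-1}\sum_{m=1}^{\ell}\sigma(m)$ for any minimizer in a cube $Q_\ell$ (Proposition~\ref{enestprop}), then argues by contradiction that if no $\tau$-subcube inside a large cube $\C_{N\tau}\subset\S_\omega^M$ is constant, each subcube contributes at least $2\lambda$ to the energy, giving $H_{\C_{N\tau}}(u)\ge \lambda N^d$. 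Comparing the two forces $\Sigma(N\tau)\ge \lambda/(c\tau^d)$, which contradicts $\Sigma(R)\to 0$ for $N$ large. The Birkhoff property and the fat-cube Lemma~\ref{fatBirkhofflem} are then used to upgrade ``some $\tau$-cube is constant'' to ``$u=-1$ near the top of the slab''. Your sketch invokes Birkhoff only at the end as an afterthought, but it is structurally essential here.

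\textbf{The scaling argument for $M=M_0\tau$.} This is simply incorrect in the discrete setting. The map $i\mapsto\tau i$ is not a bijection of $\Z^d$; it hits only the sublattice $\tau\Z^d$ and throws away all other sites, so it does not conjugate the $\tau$-periodic problem to a $1$-periodic one. There is no dilation symmetry of the lattice to exploit. The paper obtains~\eqref{OTTIMA} by a completely different route (Section~\ref{mainPLsec}): a discrete nonlocal isoperimetric inequality yields density estimates for both phases in any cube centered on the interface, these give a sharp lower bound $I_{Q_\ell,Q_\ell}(u)\ge c_\star\ell^{d-s}$ matching the upper bound~\eqref{enestbis}, and a covering argument (the ``clean ball'' Proposition~\ref{cleanballprop}) then produces a constant-sign cube of side $\kappa\ell$ with $\kappa$ depending only on $d,s,\lambda,\Lambda$. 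Plugging this into the confinement step is what makes $M_0$ proportional to $\tau$ rather than depending on $\sigma$.

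Your treatment of irrational directions and of the passage from truncated to infinite-range $J$ is fine and matches the paper.
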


We remark that, if \eqref{Jpowerlike} is satisfied,
than our estimate on the width of the strip given by~\eqref{OTTIMA}
is optimal (an explicit example will be presented in Appendix~\ref{OTTIMA:APP}).\medskip

In the case of finite-range periodic Ising models, the result in~\eqref{interinc}
was obtained in~\cite{CdlL05} (see in particular formula~(2) and Theorem~2.1 there). 
We also point the reader's attention to the more recent~\cite{B14}, 
where it is shown that such existence result fails when one considers 
coefficients that are only almost-periodic (i.e. that 
are the uniform limits of a family of periodic coefficients of increasing period).

We stress that the additional result that we obtain when
$J$ satisfies~\eqref{Jpowerlike} plays for us a crucial role, since
such scale invariance is the cornerstone to link the long-range Ising models
to the nonlocal minimal surfaces (and this will be the content of the
forthcoming
Theorems~\ref{Ising2KPerthm}
and~\ref{Ising2KPerconvthm}).\medskip

In order to deal with nonlocal minimal surfaces in periodic media,
it is convenient now to introduce the following auxiliary notation.
Let~$K: \R^d \times \R^d \to [0, +\infty]$ be a measurable function satisfying
\begin{equation} \label{Ksymm}
K(x, y) = K(y, x) \quad \mbox{for a.e.~} x, y \in \R^d,
\end{equation}
and
\begin{equation} \label{Kbounds}
\frac{\lambda}{|x - y|^{d + s}} \le K(x, y) \le \frac{\Lambda}{|x - y|^{d + s}} \quad 
\mbox{for a.e.~} x, y \in \R^d,
\end{equation}
for some exponent~$s \in (0, 1)$ and for some constants~$\Lambda \ge \lambda > 0$. We also assume~$K$ to be~$\Z^d$-periodic, that is
\begin{equation} \label{Kper}
K(x + z, y + z) = K(x,  y) \quad \mbox{for any } z \in \Z^d \mbox{ and a.e.~} x, y \in \R^d.
\end{equation}

For any open set~$\Omega \subseteq \R^d$ and any measurable function~$u : \R^d \to \R$, we define
$$
\K_K(u; \Omega) := \iint_{\CC_\Omega} |u(x) - u(y)| K(x, y) \, dx\, dy,
$$
where
$$
\CC_\Omega := \R^{2 d} \setminus \left( \R^d \setminus \Omega \right)^2.
$$
Given any two measurable sets~$A, B \subseteq \R^d$, we also write
\begin{equation}\label{1.16bis}
\K_K(u; A, B) := \int_A \int_B |u(x) - u(y)| K(x, y) \, dx\, dy,
\end{equation}
so that, recalling~\eqref{Ksymm}, it holds
$$
\K_K(u; \Omega) = \K_K(u; \Omega, \Omega) + 2 \K_K(u; \Omega, \R^d \setminus \Omega).
$$

The~\emph{K-perimeter} of a measurable set~$E \subseteq \R^d$ inside~$\Omega$ is defined by
\begin{equation} \label{PerKdef}
\Per_K(E; \Omega) := \L_K(E \cap \Omega, \Omega \setminus E) + \L_K(E \cap \Omega, \R^d \setminus (E \cup \Omega) ) + \L_K(E \setminus \Omega, \Omega \setminus E),
\end{equation}
where, for any two disjoint sets~$A, B \subset \R^d$,
\begin{equation} \label{LKdef}
\L_K(A, B) := \int_A \int_B K(x, y) \, dx \, dy.
\end{equation}
We observe that
\begin{equation} \label{KPerrelation}
\Per_K(E; \Omega) = \frac{1}{4} \, \K_K\left(\chi_E - \chi_{\R^d \setminus E}; \Omega\right).
\end{equation}
We recall that, when~$K(x, y):=|x-y|^{-d-s}$, the nonlocal perimeter in~\eqref{PerKdef}
reduces to that introduced in~\cite{CRS10}. In this sense,
the nonlocal perimeter in~\eqref{PerKdef}
is a natural notion of fractional perimeter in a non-homogeneous environment.
For a basic presentation of nonlocal minimal surfaces (i.e. surfaces
which locally minimize nonlocal perimeter functionals), see e.g.
pages~97--126 in~\cite{BV16} and~\cite{DV16}.
We also recall that the fractional perimeter provides a nonlocal
approximation of the classical perimeter and
so minimizers of the fractional perimeters
inherit several rigidity and regularity properties from the
classical case when~$s$ is close to~$1$ (see~\cite{BBM02, D02, CV13}
for general statements in this direction) .
\medskip

The concept of optimal set that we take into account here is rigorously described by the following definition:
\begin{definition}
Given an open set~$\Omega \subseteq \R^d$, a measurable 
set~$E \subseteq \R^d$ is said to be a~\emph{minimizer} 
(or a~\emph{minimal surface}\footnote{Here 
we adopt a partially misleading terminology, 
as the~\emph{boundary} $\partial E$, and not the set~$E$, should be regarded as the minimal~\emph{surface}, in conformity with the classical geometrical notion of perimeter. However, we have~$\Per_K(E; \Omega) = \Per_K(\R^d \setminus E; \Omega)$, for any set~$E$, and thus no confusion should arise from this slightly improper notation.}) for~$\Per_K$ in~$\Omega$ if~$\Per_K(E; \Omega) < +\infty$ and
$$
\Per_K(E; \Omega) \le \Per_K(F; \Omega) \quad \mbox{for any measurable set } F \subseteq \R^d \mbox{ such that } F \setminus \Omega = E \setminus \Omega.
$$
Furthermore,~$E$ is said to be a~\emph{class~A minimal surface} for~$\Per_K$ 
if it is a minimizer for~$\Per_K$ in every bounded open set~$\Omega \subset \R^d$.
\end{definition}

By means of an argument similar to that presented in Remark~\ref{minincrmk} 
for the discrete setting, one can easily convince himself or herself 
that to verify that a set~$E$ is a class~A minimal surface for~$\Per_K$ 
it is enough to check that~$E$ minimizes the~$K$-perimeter on each set 
of an exhaustion of~$\R^d$ that consists of bounded subsets, 
e.g. concentric balls or cubes of increasing diameters.
\medskip

In order to describe the similarity between the power-like long-range Ising model and the~$K$-perimeter, we associate to each kernel~$K$ a specific family of systems of coefficients~$J^{(\varepsilon)}$. Indeed, given~$\varepsilon > 0$, we set for any~$i, j \in \Z^d$
\begin{equation} \label{Jepsdef}
J_{i j}^{(\varepsilon)} := \begin{dcases}
\varepsilon^{- d + s} \int_{Q_{\varepsilon / 2}(\varepsilon i)} 
\int_{Q_{\varepsilon / 2}(\varepsilon j)} K(x, y) \, dx \, dy & \quad \mbox{if } i \ne j\\
0 & \quad \mbox{if } i = j.
\end{dcases}
\end{equation}
As we will see in the forthcoming Lemma~\ref{Jepspowerlem} in 
Section~\ref{Ising2KPersec}, the coefficients~$J^{(\varepsilon)}$ satisfy assumptions~\eqref{Jsymm},~\eqref{Jzero} and~\eqref{Jpowerlike}, uniformly in~$\varepsilon$.

Related to~$J^{(\varepsilon)}$ is then the Hamiltonian~$H^{(\varepsilon)}$ with zero magnetic flux, defined on every finite set~$\Gamma \subset \Z^d$ and any configuration~$u$ by
\begin{equation} \label{Hepsdef}
H^{(\varepsilon)}_\Gamma(u) := \sum_{(i, j) \in \Z^{2 d} 
\setminus (\Z^d \setminus \Gamma)^2} J^{(\varepsilon)}_{i j} (1 - u_i u_j).
\end{equation}
Moreover, to each configuration~$u$, we associate 
its~\emph{extension}~$\bar{u}_\varepsilon: \R^d \to \{ -1, 1 \}$ defined a.e.~by setting
\begin{equation} \label{barudef}
\bar{u}_\varepsilon(x) := u_i \quad \mbox{where } i \in \Z^d \mbox{ is the only site for which } x \in \mathring{Q}_{\varepsilon / 2}(\varepsilon i).
\end{equation}
Note that the above family of extensions allows us to understand configurations as characteristic functions in~$\R^d$, via the embedding
$$
\Z^d \longrightarrow \varepsilon \Z^d \longhookrightarrow \R^d,
$$
defined by
$$
\Z^d \ni i \longmapsto \varepsilon i \in \R^d.
$$
Clearly, the smaller the parameter~$\varepsilon$ is, 
the more densely the grid~$\Z^d$ is embedded in~$\R^d$, 
and so the closer the Hamiltonian~$H^{(\varepsilon)}$ looks to the~$K$-perimeter.

The following result addresses such similarity in a rigorous way,
by showing that the limit of ground states for the long-range Ising models with Hamiltonians~\eqref{Hepsdef} produces a nonlocal minimal surface:

\begin{theorem} \label{Ising2KPerthm}
Suppose that~$K$ satisfies assumptions~\eqref{Ksymm} and~\eqref{Kbounds}. 
Let~$\{ \varepsilon_n \}_{n \in \N} \subset (0, 1)$ be an infinitesimal sequence. 
For any~$n \in \N$, let~$u^{(n)}$ be a ground state for the 
Hamiltonian~$H^{(\varepsilon_n)}$ and 
let~$\bar{u}^{(n)} = \bar{u}^{(n)}_{\varepsilon_n}$ be its extension 
to~$\R^d$, according to~\eqref{barudef}. 

Then, there exists a diverging sequence~$\{ n_k \}_{k \in \N}$ of natural numbers such that
$$
\bar{u}^{(n_k)} \longrightarrow \chi_E - \chi_{\R^d \setminus E} \quad \mbox{a.e.~in } \R^d, \mbox{ as } k \rightarrow +\infty,
$$
where~$E \subseteq \R^d$ is a class~A minimal surface for~$\Per_K$.
\end{theorem}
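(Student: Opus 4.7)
The strategy rests on a precise scaling identity between the Hamiltonian $H^{(\varepsilon)}$ and the $K$-perimeter, which will let us transfer both compactness and minimality from the discrete to the continuous setting. To each configuration $u^{(n)}$ I associate the set $E_n := \{ \bar{u}^{(n)} = 1 \}$, so that $\bar{u}^{(n)} = \chi_{E_n} - \chi_{\R^d \setminus E_n}$. Given a finite set $\Gamma \subset \Z^d$, set $\Omega_\Gamma^{(n)} := \bigcup_{i \in \Gamma} \mathring{Q}_{\varepsilon_n / 2}(\varepsilon_n i)$. Using the elementary identity $|a - b| = 1 - a b$ valid for $a, b \in \{ -1, 1 \}$, together with the piecewise-constant structure of $\bar{u}^{(n)}$ on the cubes and the definition~\eqref{Jepsdef} of $J^{(\varepsilon_n)}$, a direct computation gives the exact identity
$$
\K_K\bigl(\bar{u}^{(n)}; \Omega_\Gamma^{(n)}\bigr) = \varepsilon_n^{d - s} \, H^{(\varepsilon_n)}_\Gamma(u^{(n)}),
$$
which in view of~\eqref{KPerrelation} becomes $\Per_K(E_n; \Omega_\Gamma^{(n)}) = \tfrac{1}{4} \varepsilon_n^{d - s} H^{(\varepsilon_n)}_\Gamma(u^{(n)})$. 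This is the cornerstone of the proof.

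Next I would obtain a uniform bound on $\Per_K(E_n; B_R)$ for any fixed ball $B_R \subset \R^d$. Testing the minimality of $u^{(n)}$ against the competitor $v$ obtained by setting $v_i = +1$ for $i \in \Gamma$ and $v_i = u^{(n)}_i$ for $i \notin \Gamma$, one bounds $H^{(\varepsilon_n)}_\Gamma(u^{(n)})$ by $4 \sum_{i \in \Gamma,\, j \notin \Gamma} J^{(\varepsilon_n)}_{ij}$. Since $J^{(\varepsilon_n)}$ satisfies~\eqref{Jpowerlike} uniformly in $\varepsilon_n$ (by Lemma~\ref{Jepspowerlem}), the classical nonlocal boundary estimate $\sum_{i \in \Gamma,\, j \notin \Gamma} |i - j|^{-d - s} \le C N^{d - s}$ for a discrete cube $\Gamma$ of side $N$ applies. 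Choosing $\Gamma_n$ so that $\Omega_{\Gamma_n}^{(n)} \approx B_R$, i.e. $N \asymp \varepsilon_n^{-1}$, the scaling identity yields $\Per_K(E_n; B_R) \le C(R)$ uniformly in $n$. A standard compactness result for sets of uniformly bounded fractional perimeter (based on the lower bound in~\eqref{Kbounds}) then produces a subsequence $E_{n_k}$ converging to some $E \subseteq \R^d$ in $L^1_{\loc}(\R^d)$ and, after a further extraction, almost everywhere; equivalently, $\bar{u}^{(n_k)} \to \chi_E - \chi_{\R^d \setminus E}$ a.e.

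Finally I would verify that $E$ is a class~A minimizer. Fix a bounded open $\Omega$ and a competitor $F$ with $F \setminus \Omega = E \setminus \Omega$. Choose an enlargement $\Omega \subset \Omega'$ and set $\Gamma_{n_k} := \{ i \in \Z^d : \varepsilon_{n_k} i \in \Omega' \}$; define the discrete competitor $v^{(n_k)}$ to agree with $u^{(n_k)}$ outside $\Gamma_{n_k}$ and, inside, to be $+1$ on $F$ and $-1$ on its complement. The ground state inequality $H^{(\varepsilon_{n_k})}_{\Gamma_{n_k}}(u^{(n_k)}) \le H^{(\varepsilon_{n_k})}_{\Gamma_{n_k}}(v^{(n_k)})$, combined with the scaling identity on both sides, translates into a $K$-perimeter comparison between the sets associated with $u^{(n_k)}$ and $v^{(n_k)}$ on $\Omega_{\Gamma_{n_k}}^{(n_k)}$. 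Passing to the limit, the left-hand side is controlled from below by $\Per_K(E; \Omega)$ via lower semicontinuity of the $K$-perimeter under $L^1_{\loc}$ convergence, while the right-hand side converges to $\Per_K(F^\ast; \Omega')$, where $F^\ast$ equals $F$ on $\Omega'$ and $E$ outside; shrinking $\Omega' \downarrow \Omega$ then delivers $\Per_K(E; \Omega) \le \Per_K(F; \Omega)$. The main obstacle is the treatment of the long-range boundary interactions: because of the slow polynomial decay in~\eqref{Kbounds}, the cross contributions between $\Gamma_{n_k}$ and its complement do not vanish in the limit, and mismatches between $\bar{u}^{(n_k)}$ and $\chi_E - \chi_{\R^d \setminus E}$ on the shell $\Omega' \setminus \Omega$ must be carefully absorbed into an error that vanishes as $\Omega' \downarrow \Omega$, using a uniform tail estimate descending from~\eqref{Kbounds} (akin to the role played by $\sigma$ in~\eqref{sigmadef}).
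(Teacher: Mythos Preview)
Your overall strategy matches the paper's: the scaling identity $\Per_K(E_n; Q_R) = \tfrac14 \varepsilon_n^{d-s} H^{(\varepsilon_n)}_{Q_{\ell_n}}(u^{(n)})$, the uniform energy bound yielding compactness via fractional Sobolev embedding, and the passage to the limit in the minimality inequality are all exactly what the paper does. Your energy bound argument (testing against the constant competitor) is a valid alternative to the paper's use of Proposition~\ref{enestprop}.

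There is, however, a genuine gap in the minimality step. You assert that the discretized competitor's perimeter converges to $\Per_K(F^\ast; \Omega')$, but this is precisely the delicate point, and it does \emph{not} follow from tail estimates or from shrinking $\Omega' \downarrow \Omega$. The obstruction is not at the outer boundary but in the interior: for a generic competitor $F$ with merely finite $K$-perimeter, the set $B_n$ of $\varepsilon_n$-cubes straddling $\partial F$ need not satisfy $|B_n| \to 0$ fast enough, and the inner interaction $\L_K(G_n^+, B_n \cap F)$ scales like $|B_n|/\varepsilon_n^s$, which can blow up. The paper resolves this by first invoking an approximation lemma (Lemma~\ref{smoothapprlem}) to replace $F$ by a set whose boundary is \emph{smooth} inside $Q_R$; only then does one get $|B_n| \le C\varepsilon_n$, which makes the discretization error $\L_K(G_n^+, B_n \cap F) \le C\varepsilon_n^{1-s} \to 0$. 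Without this reduction to smooth competitors your limit on the right-hand side is unjustified.

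Separately, the issue you do identify --- the mismatch between $\bar{u}^{(n_k)}$ and $\chi_E - \chi_{\R^d \setminus E}$ outside $\Omega$ --- is handled in the paper not by shrinking a neighborhood but by Lemma~\ref{KL1lem} ($K \in L^1(\Omega \times (\R^d \setminus \Omega))$ for Lipschitz $\Omega$) combined with dominated convergence; this is cleaner than a shell argument and worth adopting.
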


By combining Theorems~\ref{mainthm}
and~\ref{Ising2KPerthm}, we obtain the existence of planelike minimal surfaces,
as stated in the following result:

\begin{theorem} \label{PL4PerKthm}
Suppose that~$K$ satisfies assumptions~\eqref{Ksymm},~\eqref{Kbounds} and~\eqref{Kper}. Then, there exists a constant~$M_0 > 0$, depending only on~$d$,~$s$,~$\lambda$ and~$\Lambda$, for which, given any direction~$\omega \in \R^d \setminus \{ 0 \}$, we can construct a class~A minimal surface~$E_\omega$ for~$\Per_K$, such that
\begin{equation} \label{Eplanelike}
\left\{ x \in \R^d : \frac{\omega}{|\omega|} \cdot x < - M_0 \right\} \subset E_\omega \subset \left\{ x \in \R^d : \frac{\omega}{|\omega|} \cdot x \le M_0 \right\}.
\end{equation}
\end{theorem}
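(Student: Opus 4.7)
The plan is to construct $E_\omega$ as an a.e.\ limit of suitably rescaled extensions of planelike ground states provided by Theorem~\ref{mainthm}, and to identify this limit through Theorem~\ref{Ising2KPerthm}. The linear-in-$\tau$ estimate~\eqref{OTTIMA} is exactly what makes the rescaling procedure work.

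Setting $\varepsilon_n := 1/n$, I would first check that the family $J^{(\varepsilon_n)}$ defined in~\eqref{Jepsdef} fits the hypotheses of Theorem~\ref{mainthm} with period $\tau_n = n$, uniformly in $n$. Symmetry, diagonal vanishing, and the two-sided power-like bound~\eqref{Jpowerlike} (which in particular implies~\eqref{Jferro} and~\eqref{Jfinite}) will follow from the forthcoming Lemma~\ref{Jepspowerlem}, with constants depending only on $\lambda, \Lambda, s$. The $n$-periodicity~\eqref{Jper} follows from~\eqref{Kper}: shifting $i$ and $j$ in the integrals defining $J^{(\varepsilon_n)}_{ij}$ by the same element of $n \Z^d$ shifts the integration variables by an element of $\Z^d$ (since $\varepsilon_n n = 1$), and $K$ is invariant under such shifts. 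Taking $h \equiv 0$ trivially takes care of~\eqref{hsup},~\eqref{hflux0}, and~\eqref{hper}.

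For each prescribed direction $\omega \in \R^d \setminus \{0\}$, I would then apply the power-like version of Theorem~\ref{mainthm} to obtain a ground state $u^{(n)}$ for $H^{(\varepsilon_n)}$ whose interface lies in $\{i \in \Z^d : \tfrac{\omega}{|\omega|} \cdot i \in [0, M_0 n]\}$, with $u^{(n)}_i = +1$ below this slab and $u^{(n)}_i = -1$ above it, and with $M_0$ depending only on $d, s, \lambda, \Lambda$. Passing to the extension $\bar{u}^{(n)} := \bar{u}^{(n)}_{\varepsilon_n}$ from~\eqref{barudef} rescales the lattice by $\varepsilon_n = 1/n$. Since $\omega/|\omega|$ has $\ell^1$-norm equal to $1$ (in the paper's convention), the duality bound $|v \cdot w| \le |v|_1 |w|_\infty$ applied to $w = x - \varepsilon_n i \in [-\varepsilon_n/2, \varepsilon_n/2]^d$ yields
\begin{equation*}
\bar{u}^{(n)}(x) = +1 \text{ whenever } \tfrac{\omega}{|\omega|} \cdot x < -\tfrac{\varepsilon_n}{2}, \qquad \bar{u}^{(n)}(x) = -1 \text{ whenever } \tfrac{\omega}{|\omega|} \cdot x > M_0 + \tfrac{\varepsilon_n}{2}.
\end{equation*}

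Finally, Theorem~\ref{Ising2KPerthm} applied to $\{\varepsilon_n\}$ produces a subsequence along which $\bar{u}^{(n_k)} \to \chi_{E_\omega} - \chi_{\R^d \setminus E_\omega}$ a.e.\ in $\R^d$, with $E_\omega$ a class~A minimal surface for $\Per_K$. As $\varepsilon_{n_k} \to 0$, the pointwise information above passes to the limit: a.e.\ $x$ with $\tfrac{\omega}{|\omega|} \cdot x < 0$ satisfies $\bar{u}^{(n_k)}(x) \to 1$ and hence $x \in E_\omega$ modulo a null set, while a.e.\ $x$ with $\tfrac{\omega}{|\omega|} \cdot x > M_0$ satisfies $\bar{u}^{(n_k)}(x) \to -1$ and $x \notin E_\omega$. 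Since $\Per_K$ is insensitive to Lebesgue-null modifications, redefining $E_\omega$ on such a set preserves its class~A minimality and yields the inclusions $\{x : \tfrac{\omega}{|\omega|} \cdot x < 0\} \subseteq E_\omega \subseteq \{x : \tfrac{\omega}{|\omega|} \cdot x \le M_0\}$, which a fortiori imply~\eqref{Eplanelike}. The only substantive obstacle in this scheme is the uniform-in-$n$ character of~\eqref{OTTIMA}: were $M(\tau_n)$ permitted to grow faster than linearly in $\tau_n$, the rescaled slab width $M(\tau_n)/n$ would diverge and no planelike limit could be extracted. All other steps amount to transferring information across the rescaling $\Z^d \to \varepsilon_n \Z^d \hookrightarrow \R^d$.
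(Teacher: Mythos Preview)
Your proposal is correct and follows essentially the same route as the paper: choose $\varepsilon_n=1/n$, verify via Lemma~\ref{Jepspowerlem} and~\eqref{Kper} that $J^{(\varepsilon_n)}$ satisfies the hypotheses of Theorem~\ref{mainthm} with $\tau=n$, extract the planelike ground states with width $M_0 n$ from~\eqref{OTTIMA}, and pass to the limit through Theorem~\ref{Ising2KPerthm}. Your $\ell^1$--$\ell^\infty$ duality computation for the extensions $\bar u^{(n)}$ makes explicit what the paper summarizes as ``it can be readily checked from definition~\eqref{barudef}'', and your observation that the linear growth in~\eqref{OTTIMA} is precisely what keeps the rescaled slab width bounded is exactly the point.
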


The result in Theorem~\ref{PL4PerKthm} here
positively addresses a problem presented
in~\cite{C09}.

In the forthcoming paper~\cite{CV16}, we plan to obtain the same result of Theorem~\ref{PL4PerKthm}
by a different method, namely by approaching nonlocal minimal 
surfaces by nonlocal phase transitions
of Ginzburg-Landau-Allen-Cahn type: in this spirit,
we may consider the nonlocal minimal
surfaces as a natural ``pivot'', which joins,
in the limit, the
Ginzburg-Landau-Allen-Cahn phase transitions and the Ising models
in a rigorous way.\medskip

Also, as a partial counterpart to Theorem~\ref{Ising2KPerthm}, 
we have the following result, which states that a unique minimizer
of the nonlocal perimeter functional can be approximated by ground states
of long-range Ising models:

\begin{theorem} \label{Ising2KPerconvthm}
Suppose that~$K$ satisfies assumptions~\eqref{Ksymm} and~\eqref{Kbounds}. 
Let~$E$ be an open subset of~$\R^d$ and suppose that it is a strict minimizer 
for~$\Per_K$ in the cube\footnote{Throughout the whole paper,~$Q_R$ 
denotes the closed cube of~$\R^d$ having sides of length~$2 R$ and 
centered at the origin, i.e.
$$
Q_R := \Big\{ x \in \R^d : | x |_\infty \le R \Big\}.
$$
We use the same notation for cubes in~$\Z^d$. That is, for~$\ell \in \N \cup \{ 0 \}$, we write
$$
Q_\ell := \Big\{ i \in \Z^d : | i |_\infty \le \ell \Big\} = \big\{ -\ell, \ldots, -1, 0, 1, \ldots, \ell \big\}^d.
$$
Cubes not centered at the origin are indicated with~$Q_R(x) := x + Q_R$ and~$Q_\ell(q) := q + Q_\ell$, with~$x \in \R^d$ and~$q \in \Z^d$.
\label{cubesdef}}~$Q_R$, with~$R \ge 1$, that is~$\Per_K(E; \Omega) < +\infty$ and
$$
\Per_K(E; \Omega) < \Per_K(F; \Omega) \quad \mbox{for any } F \subseteq \R^d \mbox{ such that } F \setminus \Omega = E \setminus \Omega \mbox{ and } F \ne E.
$$
Let~$\{ \varepsilon_n \}_{n \in \N} \subset (0, 1)$ be an infinitesimal sequence. 

Then, for any~$n \in \N$, there exists a minimizer~$u^{(n)}$ for~$H^{(\varepsilon_n)}$ 
in the\footnote{As usual, we will denote by~$\lceil x \rceil$  
the smallest integer greater than or equal to~$x$,
and by~$\lfloor x\rfloor$ 
the largest integer less than or equal to~$x$.} 
cube~$Q_{\lceil R / \varepsilon_n \rceil}$, such that, denoting by~$\bar{u}^{(n)} = \bar{u}^{(n)}_{\varepsilon_n}$ its extension to~$\R^d$ given by~\eqref{barudef}, it holds
$$
\bar{u}^{(n_k)} \longrightarrow \chi_E - \chi_{\R^d \setminus E} \quad \mbox{a.e.~in } \R^d, \mbox{ as } k \rightarrow +\infty,
$$
for some diverging sequence~$\{n_k\}_{k \in \N}$ of natural numbers.
\end{theorem}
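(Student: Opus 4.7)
The plan is to run a $\Gamma$-convergence-type argument paralleling the proof of Theorem~\ref{Ising2KPerthm}, with the boundary data for the discrete problem tailored to the set $E$ and a final rigidity step exploiting strict minimality. Fix $n$, set $\ell_n:=\lceil R/\varepsilon_n\rceil$, $\Gamma_n:=Q_{\ell_n}\subset\Z^d$, and let $\Omega_n$ be the open cube $\mathrm{int}(\varepsilon_n\Gamma_n+Q_{\varepsilon_n/2})$, which satisfies $Q_R\subseteq\Omega_n\subseteq Q_{R+2\varepsilon_n}$. Pixelate $E$ by declaring $v^{(n)}_i:=+1$ if $\varepsilon_n i\in E$ and $-1$ otherwise, and let $u^{(n)}$ minimize $H^{(\varepsilon_n)}_{\Gamma_n}$ among configurations agreeing with $v^{(n)}$ on $\Z^d\setminus\Gamma_n$; this is a finite combinatorial optimization and Lemma~\ref{Jepspowerlem} guarantees that every admissible Hamiltonian is finite. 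Using $|u_i-u_j|=1-u_iu_j$ for $u_i,u_j\in\{-1,+1\}$ and expanding cube by cube, definitions~\eqref{Jepsdef} and~\eqref{KPerrelation} yield the scaling identity
\[
\varepsilon_n^{d-s}\,H^{(\varepsilon_n)}_{\Gamma_n}(u)=\K_K(\bar u_{\varepsilon_n};\Omega_n)=4\,\Per_K\bigl(\{\bar u_{\varepsilon_n}=+1\};\Omega_n\bigr)
\]
for every configuration $u$. Writing $\bar E^{(n)}:=\{\bar u^{(n)}_{\varepsilon_n}=+1\}$ and $\bar E^{(n)}_0:=\{\bar v^{(n)}_{\varepsilon_n}=+1\}$, the discrete minimality of $u^{(n)}$ against $v^{(n)}$ becomes $\Per_K(\bar E^{(n)};\Omega_n)\le\Per_K(\bar E^{(n)}_0;\Omega_n)$.

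Next, since $\bar E^{(n)}_0\triangle E$ is contained in an $\varepsilon_n$-collar of $\partial E$ and $\Per_K(E;Q_R)<+\infty$ by hypothesis, a careful splitting of the three contributions in~\eqref{PerKdef} combined with dominated convergence (based on the integrability of $K$ off the diagonal) yields the one-sided bound
\[
\limsup_{n\to\infty}\Per_K(\bar E^{(n)}_0;\Omega_n)\;\le\;\Per_K(E;Q_R).
\]
This furnishes a uniform bound on $\Per_K(\bar E^{(n)};\Omega_n)$, hence on $\K_K(\bar u^{(n)}_{\varepsilon_n};\Omega_n)$. A Fr\'echet--Kolmogorov-type compactness criterion adapted to the fractional seminorm (applicable since $K\ge\lambda|\cdot|^{-d-s}$ and $\bar u^{(n)}$ takes only two values) then provides precompactness of $\{\bar u^{(n)}\}$ in $L^1_\loc(\R^d)$. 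Outside $\Omega_n$, $\bar u^{(n)}\equiv\bar v^{(n)}$ converges pointwise a.e.\ to $\chi_E-\chi_{\R^d\setminus E}$, so along a subsequence $\bar u^{(n_k)}\to\chi_F-\chi_{\R^d\setminus F}$ a.e.\ in $\R^d$ for some measurable $F\subseteq\R^d$ with $F\setminus Q_R=E\setminus Q_R$ up to a null set.

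Combining Fatou's lemma with~\eqref{1.16bis} and~\eqref{PerKdef} gives $\Per_K(F;Q_R)\le\liminf_k\Per_K(\bar E^{(n_k)};\Omega_{n_k})\le\Per_K(E;Q_R)$, so $F$ is an admissible competitor realizing the same energy as $E$; strict minimality then forces $F=E$ a.e., and the claimed convergence follows. The main technical obstacle is precisely the one-sided estimate $\limsup_n\Per_K(\bar E^{(n)}_0;\Omega_n)\le\Per_K(E;Q_R)$: the $K$-perimeter is only lower (not upper) semicontinuous under $L^1_\loc$ convergence, so one cannot merely invoke $\bar E^{(n)}_0\to E$ in $L^1_\loc$; the proof must exploit the specific pixelation structure, together with the fact that $\bar E^{(n)}_0\triangle E$ is confined to a shrinking $\varepsilon_n$-layer around $\partial E$, in order to kill the contributions from the diagonal of $K$ in the limit.
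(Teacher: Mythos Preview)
Your overall strategy---pixelate $E$ to set boundary data, minimize the discrete Hamiltonian, pass to a limit, and invoke strict minimality---matches the paper's. However, the argument has a genuine gap at the very step you flag as the ``main technical obstacle'': the inequality
\[
\limsup_{n\to\infty}\Per_K\bigl(\bar E^{(n)}_0;\Omega_n\bigr)\le\Per_K(E;Q_R).
\]
Your proposed justification (dominated convergence plus the observation that $\bar E^{(n)}_0\triangle E$ lies in an $\varepsilon_n$-collar of $\partial E$) does not go through. Dominated convergence fails for the inner contribution $\L_K(\,\cdot\cap\Omega_n,\Omega_n\setminus\cdot\,)$ because $K\notin L^1(\Omega_n\times\Omega_n)$, and the collar argument used in Lemma~\ref{GtoFlem} relies on the bound $|B_n|\le C\varepsilon_n$, which in turn requires $\partial E\cap Q_R$ to be smooth (or at least $(d-1)$-rectifiable with finite $\Haus^{d-1}$-measure). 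Here $E$ is only assumed open with finite $K$-perimeter, so the number of ``bad'' cubes meeting $\partial E$ is not controlled, and the near-diagonal contribution need not vanish. This is precisely why the $\Gamma$-$\limsup$ in Theorem~\ref{Gammathm} is proved via the smooth approximation of Lemma~\ref{smoothapprlem} and a diagonal argument, \emph{not} by direct pixelation.

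The paper sidesteps the issue entirely. For compactness it does not compare $u^{(n)}$ with the pixelation of $E$ at all; it uses the a~priori energy estimate of Proposition~\ref{enestprop} (in the form~\eqref{enestbis}), which gives $\Per_K(E_n;Q_R)\le C R^{d-s}$ regardless of the boundary datum. Then, rather than trying to show $\Per_K(\widehat E;Q_R)\le\Per_K(E;Q_R)$ directly, it proves that the limit $\widehat E$ is a \emph{minimizer} in $Q_R$ by competing against arbitrary $F$---which, by Lemma~\ref{smoothapprlem}, may be taken to have smooth boundary, so that the analysis of Lemma~\ref{GtoFlem} applies. Strict minimality then forces $\widehat E=E$. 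Your shortcut (comparing only against $E$) would be more economical if the $\limsup$ estimate held, but as written the argument does not close; to repair it you would have to either first smooth $E$ (which disturbs the boundary datum and forces a further diagonal argument) or fall back on the paper's route.
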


We remark that, in view of
Theorems~\ref{Ising2KPerthm}
and~\ref{Ising2KPerconvthm},
there is a perfect
correspondence between the ground states of the Ising model and
the minimizers of the nonlocal perimeter, provided that
the latter ones are unique.
\smallskip

To make this correspondence even more explicit, 
we may rephrase it through the language of~$\Gamma$-convergence. 
We consider the topological space
$$
\XX := \Big\{ v \in L^\infty(\R^d) : \| v \|_{L^\infty(\R^d)} \le 1 \Big\},
$$
as endowed with the topology given by 
the convergence in~$ L^1_\loc(\R^d)$.

For any~$\varepsilon > 0$, we also introduce the subspace
\begin{equation}\label{1.23bis}
\XX_\varepsilon := \Big\{ v \in \XX : v \mbox{ is constant on the cube } 
\mathring{Q}_{\varepsilon / 2}(\varepsilon i), \mbox{ for any } i \in \Z^d \Big\}.
\end{equation}
Also, given any bounded open set~$\Omega \subset \R^d$, 
we consider the functionals~$\G_K(\cdot; \Omega) : \XX \to [0, +\infty]$ defined by
\begin{equation} \label{GKdef}
\G_K(v; \Omega) := \begin{cases}
\K_K(v; \Omega) & \quad \mbox{if } v|_\Omega = 
\chi_E - \chi_{\R^d \setminus E}, \mbox{ for some measurable } E \subseteq \Omega, \\
+\infty & \quad \mbox{otherwise},
\end{cases}
\end{equation}
and~$\G_K^{(\varepsilon)}(\cdot; \Omega): \XX_\varepsilon \to [0, +\infty]$ 
obtained by setting~$\G_K^{(\varepsilon)}(\cdot; \Omega) := 
\G_K(\cdot; \Omega)|_{\XX_\varepsilon}$. 

Observe that, in view of identity~\eqref{KPerrelation}, 
when~$v$ is globally the (modified) characteristic function of a set~$E$, 
then~$\G_K(v; \Omega)$ boils down to the~$K$-perimeter of~$E$ inside~$\Omega$.

Notice that the map defined in~\eqref{barudef} is actually a 
homeomorphism of the space of configurations (endowed with the 
standard pointwise convergence topology) onto the space~$\XX_\varepsilon$. 
Moreover, given any~$\ell \in \N$, we observe that any configuration~$u$, 
together with its extension~$\bar{u}_\varepsilon \in \XX_\varepsilon$ (as given 
by~\eqref{barudef}), satisfies the Hamiltonian-energy relation
\begin{equation}\label{1.24bis}
\varepsilon^{d - s} H_{Q_\ell}^{(\varepsilon)}(u) = \K_K(\bar{u}_\varepsilon, Q_R),
\end{equation}
where~$R = (\ell + 1/2) \varepsilon$. 
This identity completes the picture on the equivalence between 
the space of configurations with the associated Hamiltonian~$H^{(\varepsilon)}$ 
and~$\XX_\varepsilon$ with the energy~$\K_K$.

Thanks to this complete identification, it is legitimate to see the next result as an appropriate~$\Gamma$-convergence formulation of the asymptotic relation intervening between the~$\varepsilon$-Ising model~\eqref{Jepsdef}-\eqref{Hepsdef} and the~$K$-perimeter~\eqref{PerKdef}.

\begin{theorem} \label{Gammathm}
Suppose that~$K$ satisfies assumptions~\eqref{Ksymm} and~\eqref{Kbounds}. 
Let~$\Omega \subset \R^d$ be a bounded open set with Lipschitz 
boundary.\footnote{Actually, the Lipschitz regularity assumption on 
the boundary of~$\Omega$ can be omitted for the deduction of 
the~$\Gamma$-$\liminf$ inequality.} 

Then, the family of functionals~$\G_K^{(\varepsilon)}(\cdot, \Omega)$~$\Gamma$-converges to~$\G_K(\cdot, \Omega)$, as~$\varepsilon \rightarrow 0^+$. More precisely, we have
\begin{enumerate}[$\bullet$]
\item $(\Gamma$-$\liminf$ inequality$)$: for any~$u_\varepsilon \in \XX_{\varepsilon}$ converging to~$u \in \XX$, it holds
$$
\liminf_{\varepsilon \rightarrow 0^+} \G_K^{(\varepsilon)}(u_\varepsilon; \Omega) \ge \G_K(u; \Omega);
$$
\item $(\Gamma$-$\limsup$ inequality$)$: for any~$u \in \XX$, there exists~$u_\varepsilon \in \XX_{\varepsilon}$ converging to~$u$ and such that
$$
\limsup_{\varepsilon \rightarrow 0^+} \G_K^{(\varepsilon)}(u_\varepsilon; \Omega) \le \G_K(u; \Omega);
$$
\item $($Compactness$)$: given any infinitesimal sequence~$\{ \varepsilon_n \}_{n \in \N} \subset (0, 1)$, if~$u_n \in \XX_{\varepsilon_n}$ satisfies
$$
\sup_{n \in \N} \G_K^{(\varepsilon_n)}(u_n; \Omega) \le C,
$$
for some~$C \ge 0$, then there exist a measurable 
set~$E \subseteq \Omega$ and a diverging sequence~$\{ n_k \}_{k \in \N}$ 
of natural numbers such that~$u_{n_k}$ converges to~$\chi_E - \chi_{\R^d \setminus E}$ 
a.e.~in~$\Omega$, as~$k\to+\infty$.
\end{enumerate}
\end{theorem}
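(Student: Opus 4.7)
The plan is to exploit the structural observation that $\G_K^{(\varepsilon)}(\cdot;\Omega)$ is simply the restriction of $\G_K(\cdot;\Omega)$ to $\XX_\varepsilon$ (with no $\varepsilon$-rescaling), so that the $\Gamma$-$\liminf$ inequality becomes a lower semicontinuity statement for $\G_K$ on $\XX$ with respect to the $L^1_\loc$-topology. Given $u_\varepsilon \in \XX_\varepsilon$ converging to $u \in \XX$, if the $\liminf$ is $+\infty$ the estimate is vacuous; otherwise, along a subsequence I can ensure that $\G_K^{(\varepsilon_n)}(u_{\varepsilon_n};\Omega)$ is uniformly bounded, which forces each $u_{\varepsilon_n}|_\Omega$ to be $\{\pm 1\}$-valued a.e. Passing to a further subsequence that converges a.e.\ in $\R^d$ then shows that $u|_\Omega$ is $\{\pm 1\}$-valued as well, so that $\G_K(u;\Omega) = \K_K(u;\Omega)$, and applying Fatou's lemma to the nonnegative integrands $|u_{\varepsilon_n}(x)-u_{\varepsilon_n}(y)|\,K(x,y)$ on the domain $\CC_\Omega$ concludes.

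The compactness assertion follows from standard fractional-Sobolev estimates. A uniform bound $\G_K^{(\varepsilon_n)}(u_n;\Omega) \le C$ first forces $u_n|_\Omega$ to be $\{\pm 1\}$-valued and, by the lower bound in~\eqref{Kbounds},
$$
\int_\Omega\!\int_\Omega \frac{|u_n(x)-u_n(y)|}{|x-y|^{d+s}} \, dx \, dy \le \frac{C}{\lambda}.
$$
Combined with $\|u_n\|_{L^\infty(\Omega)} \le 1$, this yields boundedness of $\{u_n\}$ in $W^{s,1}(\Omega)$, and the compact embedding $W^{s,1}(\Omega) \hookrightarrow L^1(\Omega)$ (valid since $\partial\Omega$ is Lipschitz) provides a subsequence $u_{n_k}$ converging a.e.\ in $\Omega$ to some $u^\ast$. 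Because each $u_{n_k}$ is $\{\pm 1\}$-valued, so is $u^\ast$, which determines the measurable set $E := \{u^\ast = 1\} \subseteq \Omega$ asserted in the statement.

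For the $\Gamma$-$\limsup$ inequality, if $\G_K(u;\Omega) = +\infty$ it is enough to take $u_\varepsilon$ to equal on each cube $\mathring Q_{\varepsilon/2}(\varepsilon i)$ the mean value of $u$ over that cube: this belongs to $\XX_\varepsilon$ and converges to $u$ in $L^1_\loc(\R^d)$ by Lebesgue's differentiation theorem, making the estimate trivial. When $\G_K(u;\Omega) < +\infty$, so that $u|_\Omega = \chi_E - \chi_{\R^d \setminus E}$ for some measurable $E \subseteq \Omega$, I would construct the recovery sequence via a two-regime discretization: on each cube meeting a small neighbourhood of $\overline\Omega$ I would impose a majority rule (setting $u_\varepsilon = \pm 1$ according to the sign of the cube-mean of $u$), while on cubes well inside $\R^d\setminus\Omega$ I would simply use the cube-mean itself. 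The Lipschitz regularity of $\partial\Omega$ ensures that the band of cubes straddling $\partial\Omega$ has total Lebesgue measure $O(\varepsilon)$, so that the majority-rule modification does not spoil the $L^1_\loc$-convergence, and the convergence of the energies is then obtained via dominated convergence on $\CC_\Omega$, using $K(x,y) \le \Lambda|x-y|^{-d-s}$ from~\eqref{Kbounds} together with the finiteness of $\K_K(u;\Omega)$.

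The hardest step I expect to be precisely this $\Gamma$-$\limsup$ construction. The combination of cube-constancy and $\{\pm 1\}$-valuedness on $\Omega$ clashes with the freedom of $u$ on $\R^d \setminus \Omega$ (where $|u|\le 1$ but $u$ is not forced to be $\pm 1$), and the majority-rule assignment may a priori create spurious $\pm 1/\mp 1$ interfaces in the boundary layer near $\partial\Omega$; keeping the extra energy caused by these interfaces infinitesimal as $\varepsilon\to 0^+$ is where the Lipschitz hypothesis is essentially invoked, in agreement with the footnote following the statement.
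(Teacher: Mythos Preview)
Your treatment of the $\Gamma$-$\liminf$ inequality and of compactness is correct and coincides with the paper's: both follow from Fatou's lemma and from the compact embedding $W^{s,1}(\Omega)\hookrightarrow L^1(\Omega)$, respectively.

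The $\Gamma$-$\limsup$ sketch, however, has a genuine gap. You propose to obtain the convergence of $\K_K(u_\varepsilon;\Omega)$ to $\K_K(u;\Omega)$ ``via dominated convergence on $\CC_\Omega$''. This works for the outer piece $\K_K(u_\varepsilon;\Omega,\R^d\setminus\Omega)$, since $K\in L^1(\Omega\times(\R^d\setminus\Omega))$ by Lemma~\ref{KL1lem}. But on the inner piece $\Omega\times\Omega$ the kernel $K$ is \emph{not} integrable, and there is no reason for $|u_\varepsilon(x)-u_\varepsilon(y)|K(x,y)$ to be dominated by a fixed integrable function: the majority rule can create a $+1/-1$ interface between two neighbouring cubes on which $|E\cap Q|/|Q|$ is, say, $0.6$ and $0.4$, strictly increasing the local interaction relative to that of $u$. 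Your diagnosis locates the difficulty only in the band of cubes straddling $\partial\Omega$, but the real obstruction lies along $\partial E\cap\Omega$, where $E$ may be arbitrarily rough and the set $\{u_\varepsilon\ne u\}\cap\Omega$ need not have measure $O(\varepsilon)$.

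The paper resolves this by inserting an intermediate smooth approximation: it first replaces $E$ (inside a slightly larger set $\Omega'$) by a set with smooth boundary using Lemma~\ref{smoothapprlem}, and simultaneously replaces $u$ outside $\Omega'$ by a continuous function. For such regularised data the discretisation $u_\varepsilon:=\inf_{Q_{\varepsilon/2}(\varepsilon i)}u$ satisfies $|\{u_\varepsilon\ne u\}\cap\Omega|\le C\varepsilon$, and the inner energy difference is then controlled by a direct computation of the type carried out in Lemma~\ref{GtoFlem} (the problematic term is of order $|B_\varepsilon|/\varepsilon^s=O(\varepsilon^{1-s})$). A diagonal argument over the approximation parameter then yields the recovery sequence for general $u$. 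Your construction is salvageable if you insert this smooth-approximation step; as written, the dominated-convergence claim for the inner term does not go through.
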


\medskip

The rest of the paper follows this organization:
in Section~\ref{mainsec}
and~\ref{mainPLsec} we give the
proof of Theorem~\ref{mainthm}, by considering
as a special case the one of
power-like interactions
with no magnetic term (which leads to additional, scale invariant,
results).

Then, in Section~\ref{intersec}, we present some ancillary
results on nonlocal perimeter functionals.
The link between Ising models and nonlocal minimal surfaces
is discussed in Sections~\ref{Ising2KPersec}
and~\ref{YUI:ASDA:2},
where we give the proofs of Theorems~\ref{Ising2KPerthm}
and~\ref{Ising2KPerconvthm}, respectively. In between, in Section~\ref{YUI:ASDA},
we also prove Theorem~\ref{PL4PerKthm}, thus
obtaining the existence of planelike 
nonlocal minimal surfaces as a byproduct
of our analysis of the Ising model. 

Finally, Section~\ref{Gammasec} is devoted to the proof of 
the~$\Gamma$-convergence result given by Theorem~\ref{Gammathm}.

\section{Proof of Theorem~\ref{mainthm} in the general setting} \label{mainsec}

In this section we include the proof of Theorem~\ref{mainthm} in the general case of~$J$ 
and~$h$ satisfying~\eqref{Jsymm},~\eqref{Jzero},~\eqref{Jferro},~\eqref{Jfinite},~\eqref{Jper} 
and~\eqref{hsup}~\eqref{hflux0},~\eqref{hper}, respectively. The more specific scenario given by 
hypothesis~\eqref{Jpowerlike} and~$h = 0$, described in the latter claim of the statement of Theorem~\ref{mainthm}, 
will be considered in the next Section~\ref{mainPLsec}.

As the construction is rather involved, we split the argument into eight subsections.

First, we consider the case of a rational~$\omega \in \Q^d \setminus \{ 0 \}$. For any such direction, 
we build a ground state for~$H$ whose interface satisfies the inclusion~\eqref{interinc}, for some~$M > 0$. 
As will be evident by following the steps of the construction, the constant~$M$ is indeed independent of the 
chosen direction~$\omega$. As a result, an approximation argument displayed in the conclusive Subsection~\ref{omegairrsub} 
will show that Theorem~\ref{mainthm} can be extended to general directions~$\omega \in \R^d \setminus \{ 0 \}$.

Although the existence of ground states will be eventually carried out in the generality announced in the 
statement of Theorem~\ref{mainthm}, we need to initially impose an additional condition on the interaction 
coefficients~$J$. Throughout Subsections~\ref{constrsub}-\ref{unconstrsub}, we always assume that~$J$ satisfies
\begin{equation} \label{Jtruncated}
J_{i j} = 0 \quad \mbox{for any } i, j \in \Z^d \mbox{ such that } |i - j| > R,
\end{equation}
for some~$R > 0$. Assumption~\eqref{Jtruncated} allows us to avoid some technical complications related 
to the presence of tails in the interaction term of the Hamiltonian~$H$. The estimates performed in the next subsections 
under hypothesis~\eqref{Jtruncated} will however turn out to be independent of the range of positivity~$R > 0$. 
Therefore, in Subsection~\ref{infrangesub} we will be able to remove such assumption with the help of an easy 
limiting argument and thus recover the validity of Theorem~\ref{mainthm} in its full generality.

\subsection{Constrained minimizers} \label{constrsub}

Let~$\omega \in \Q^d \setminus \{ 0 \}$ and~$m \in \N$. We consider the~$\Z$-modules
$$
\LL_\omega := \Big\{ i \in \tau \Z^d : \omega \cdot i = 0 \Big\},
$$
and
$$
\LL_{m, \omega} := m \LL_\omega.
$$
We indicate with~$\F_{m, \omega}$ any fundamental domain of the quotient space~$\Z^d / \LL_{m, \omega}$. 
Given any two real numbers~$A < B$, we divide~$\F_{m, \omega}$ into the three subregions
\begin{align*}
\F_{m, \omega}^{A, B} & := \left\{ i \in \F_{m, \omega} : \frac{\omega}{|\omega|} \cdot i \in [A, B] \right\}, \\
\F_{m, \omega}^{A, -} & := \left\{ i \in \F_{m, \omega} : \frac{\omega}{|\omega|} \cdot i < A \right\} \\
{\mbox{and }}\quad \F_{m, \omega}^{B, +} & := \left\{ i \in \F_{m, \omega} : \frac{\omega}{|\omega|} \cdot i > B \right\}.
\end{align*}

A configuration~$u$ is said to be~\emph{$(m, \omega)$-periodic} if
\begin{equation}\label{starstar}
u_{i + k} = u_i \quad \mbox{for any } i \in \Z^d \mbox{ and any } k \in \LL_{m, \omega}.
\end{equation}
We denote by~$\P_{m, \omega}$ the set of all~$(m, \omega)$-periodic configurations. Furthermore, we consider the class~$\A_{m, \omega}^{A, B}$ of~\emph{admissible configurations}, defined by
$$
\A_{m, \omega}^{A, B} := \Big\{ u \in \P_{m, \omega} : u_i = 1 \mbox{ for any } i \in \F_{m, \omega}^{A, -} \mbox{ and } u_i = -1 \mbox{ for any } i \in \F_{m, \omega}^{B, +} \Big\}.
$$

Recalling the notation in~\eqref{Idef} and~\eqref{Bdef}, 
we introduce the auxiliary functional~$G_{m, \omega}^{A, B}$, defined on any configuration~$u$ by
\begin{equation} \label{Gdef}
\begin{aligned}
G_{m, \omega}^{A, B}(u) := & \, I_{\F_{m, \omega}, \, \Z^d}(u) + B_{\F_{m, \omega}^{A, B}}(u) \\
= & \, \sum_{i \in \F_{m, \omega}, \, j \in \Z^d} J_{i j} (1 - u_i u_j) 
+ \sum_{i \in \F_{m, \omega}^{A, B}} h_i u_i.
\end{aligned}
\end{equation}
Observe that the interaction term of this functional differs from that of~$H_{\F_{m, \omega}}$ for 
the fact that in~$H_{\F_{m, \omega}}$ the interactions between the regions~$\F_{m, \omega}$ and~$\Z^d \setminus \F_{m, \omega}$ 
are counted twice. Also note that~$G_{m, \omega}^{A, B}$ 
is well-defined on any configuration in~$\A_{m, \omega}^{A, B}$,
as, in this case,
the series defining the first interaction in~\eqref{Gdef} involves a sum of 
only a finite number of terms, thanks to~\eqref{Jtruncated}
(and the second interaction is always a finite sum).

Moreover, we denote by~$\M_{m, \omega}^{A, B}$ the subset of~$\A_{m, \omega}^{A, B}$ 
composed by the minimizers of~$G_{m, \omega}^{A, B}$. That is,
$$
\M_{m, \omega}^{A, B} := \Big\{ u \in \A_{m, \omega}^{A, B} : G_{m, \omega}^{A, B}(u) \le G_{m, \omega}^{A, B}(v) 
\mbox{ for any } v \in \A_{m, \omega}^{A, B} \Big\}.
$$
Observe that~$\M_{m, \omega}^{A, B}$ is non-empty, since~$\A_{m, \omega}^{A, B}$ is made up of a finite number of configurations.

Now we introduce a couple of operations on the space of configurations.
Given two configurations~$u$,~$v$ we define their minimum~$\min \{ u, v \}$ and maximum~$\max \{ u, v \}$ by setting
\begin{equation}\begin{split}\label{minmax}
\left( \min \{ u, v \} \right)_i & := \min \{ u_i, v_i \}, \\
\left( \max \{ u, v \} \right)_i & := \max \{ u_i, v_i \},
\end{split}\end{equation}
for any~$i \in \Z^d$. Analogously, one defines the minimum and maximum of a finite number of configurations. 

We present the following simple result which shows that the interaction energy~\eqref{Idef} 
always decreases when considering minima and maxima.

\begin{lemma} \label{minmaxdecreaselem}
Given any two subsets~$\Gamma, \Omega \subseteq \Z^d$ and any two configurations~$u, v$, it holds
\begin{equation} \label{minmaxdecrease}
I_{\Gamma, \Omega}(\min \{ u, v \}) + I_{\Gamma, \Omega}(\max \{ u, v \}) \le I_{\Gamma, \Omega}(u) + I_{\Gamma, \Omega}(v).
\end{equation}
\end{lemma}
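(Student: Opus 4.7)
The plan is to reduce the inequality to a summand-wise statement and verify it by an elementary case analysis. Since $I_{\Gamma,\Omega}$ is defined in~\eqref{Idef} as a sum over pairs $(i,j) \in \Gamma \times \Omega$ of the nonnegative quantities $J_{ij}(1 - u_i u_j)$, one can rewrite
$$
I_{\Gamma,\Omega}(u) + I_{\Gamma,\Omega}(v) - I_{\Gamma,\Omega}(\min\{u,v\}) - I_{\Gamma,\Omega}(\max\{u,v\})
= \sum_{i \in \Gamma, \, j \in \Omega} J_{ij} \, \Delta_{ij},
$$
where
$$
\Delta_{ij} := \min\{u_i,v_i\} \min\{u_j,v_j\} + \max\{u_i,v_i\} \max\{u_j,v_j\} - u_i u_j - v_i v_j.
$$
Since $J_{ij} \ge 0$ by hypothesis, it is enough to prove $\Delta_{ij} \ge 0$ for every pair $(i,j)$.

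This reduces the lemma to the following elementary claim: for any four real numbers $a,b,c,d$ one has
$$
\min\{a,b\} \min\{c,d\} + \max\{a,b\} \max\{c,d\} \ge ac + bd.
$$
I would verify this by splitting into two cases according to whether the orderings of the pair $(a,b)$ and of the pair $(c,d)$ agree. If $a \le b$ and $c \le d$ (or both reverse inequalities hold), the left-hand side equals exactly $ac + bd$ and the inequality is an equality. In the mixed case, say $a \le b$ and $d \le c$, the left-hand side equals $ad + bc$, so the inequality becomes $(b-a)(c-d) \ge 0$, which holds because both factors are nonnegative. The remaining mixed case is symmetric. Of course, in our setting it suffices to apply this to $a = u_i$, $b = v_i$, $c = u_j$, $d = v_j$, all of which belong to $\{-1,+1\}$, but the inequality holds in full generality.

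There is no real obstacle here: the only mildly delicate point is bookkeeping, namely making sure that the sum in~\eqref{Idef} really does split summand by summand in the way claimed, which is immediate because $I_{\Gamma,\Omega}$ is an honest (finite or convergent) sum of nonnegative terms — under the standing truncation~\eqref{Jtruncated} only finitely many $J_{ij}$ are nonzero, and even without it the decomposition is valid as soon as each of the four sums is finite, because we are rearranging summands of a common sign. Once the pointwise inequality $\Delta_{ij} \ge 0$ is in hand, summing against the nonnegative weights $J_{ij}$ yields~\eqref{minmaxdecrease}.
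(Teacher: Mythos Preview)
Your proof is correct and follows essentially the same approach as the paper's: both reduce~\eqref{minmaxdecrease} to a pointwise inequality on each pair $(i,j)$ and verify it by a case analysis on whether the orderings of $(u_i,v_i)$ and $(u_j,v_j)$ agree. The only cosmetic difference is that the paper computes the mixed case explicitly using the values $\pm 1$, whereas you phrase it as the real-valued inequality $(b-a)(c-d)\ge 0$; the content is identical.
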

\begin{proof}
Simply write~$m$ and~$M$ for~$\min \{ u, v \}$ and~$\max \{ u, v \}$. We suppose that the right-hand side of~\eqref{minmaxdecrease} is finite, since otherwise the inequality is trivially satisfied.

Take~$i \in \Gamma$ and~$j \in \Omega$. Then, one of the following four situations necessarily occurs:
\begin{enumerate}[(i)]
\item $u_i \le v_i$ and~$u_j \le v_j$;
\item $u_i < v_i$ and~$u_j > v_j$;
\item $u_i > v_i$ and~$u_j < v_j$;
\item $u_i \ge v_i$ and~$u_j \ge v_j$.
\end{enumerate}
If either~(i) or~(iv) is true, then~$u$ and~$v$ are equally ordered at both sites~$i$ and~$j$. Hence, the identity
$$
(1 - m_i m_j) + (1 - M_i M_j) = (1 - u_i u_j) + (1 - v_i v_j),
$$
easily follows. Thus, we only need to inspect what happens when either~(ii) or~(iii) is verified. By symmetry, we may in fact restrict our attention to case~(ii) only. In this case, we have~$m_i = u_i = -1$,~$M_i = v_i = 1$,~$M_j = u_j = 1$ and~$m_j = v_j = -1$. Therefore,
$$
(1 - m_i m_j) + (1 - M_i M_j) = 0 < 2 + 2 = (1 - u_i u_j) + (1 - v_i v_j).
$$
Consequently, both series on the left-hand side of~\eqref{minmaxdecrease} converge and the inequality follows.
\end{proof}

We conclude the subsection by investigating the relationship existing between the minimizers of 
the functionals~$G^{A,B}_{m,\omega}$ and~$H$. 
The following proposition shows that the periodic minimizers of~$G_{m, \omega}^{A, B}$ just described 
are indeed minimizers of~$H$ with respect to perturbations supported inside~$\F_{m, \omega}^{A, B}$.

\begin{proposition} \label{GminisHmin}
Let~$u \in \M_{m, \omega}^{A, B}$. Then,~$u$ is a minimizer for~$H$ in~$\F_{m, \omega}^{A, B}$.
\end{proposition}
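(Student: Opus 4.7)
The plan is to reduce an arbitrary competitor to \emph{one-sided} perturbations of $u$ via a min--max decomposition, and then dispatch each one-sided case by comparing $H$ with $G_{m, \omega}^{A, B}$ through an $(m, \omega)$-periodic extension.

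Set $\Gamma := \F_{m, \omega}^{A, B}$ and let $v$ agree with $u$ outside $\Gamma$. Define the auxiliary configurations $v_+ := \max\{v, u\}$ and $v_- := \min\{v, u\}$ according to~\eqref{minmax}. Applying Lemma~\ref{minmaxdecreaselem} to each of the three blocks $I_{\Gamma, \Gamma}$, $I_{\Gamma, \Z^d \setminus \Gamma}$, $I_{\Z^d \setminus \Gamma, \Gamma}$ that compose $I_\Gamma$, together with the pointwise identity $v_+ + v_- = v + u$ handling the magnetic term $B_\Gamma$, one obtains the subadditivity relation
$$
H_\Gamma(v_+) + H_\Gamma(v_-) \le H_\Gamma(v) + H_\Gamma(u).
$$
Once I know $H_\Gamma(u) \le H_\Gamma(v_\pm)$, rearranging yields the desired $H_\Gamma(u) \le H_\Gamma(v)$. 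The task thus reduces to showing minimality of $u$ against any configuration $w$ which agrees with $u$ outside $\Gamma$ and satisfies either $w \ge u$ or $w \le u$ pointwise.

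Fix such a $w$ and assume, by symmetry, $w \ge u$. Let $\tilde w$ be the $(m, \omega)$-periodic extension of $w|_{\F_{m, \omega}}$. Since $w = u$ on $\F_{m, \omega} \setminus \Gamma$ and $u$ carries the required values ($+1$ on $\F_{m, \omega}^{A, -}$ and $-1$ on $\F_{m, \omega}^{B, +}$), the extension $\tilde w$ lies in $\A_{m, \omega}^{A, B}$ and still satisfies $\tilde w \ge u$ throughout $\Z^d$. Minimality of $u$ in $\M_{m, \omega}^{A, B}$ therefore gives $G_{m, \omega}^{A, B}(\tilde w) \ge G_{m, \omega}^{A, B}(u)$. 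The computational heart of the argument is then the identity
$$
\bigl[ H_\Gamma(w) - H_\Gamma(u) \bigr] - \bigl[ G_{m, \omega}^{A, B}(\tilde w) - G_{m, \omega}^{A, B}(u) \bigr] = \sum_{i, j \in \Gamma} \left( \sum_{k \in \LL_{m, \omega} \setminus \{0\}} J_{i, j + k} \right) (u_i - w_i)(u_j - w_j),
$$
which I would establish by expanding both differences, exploiting the $(m, \omega)$-periodicity of $u$ and $\tilde w$ to rewrite $\sum_{j \in \Z^d}$ appearing in $I_{\F_{m, \omega}, \Z^d}$ as $\sum_{j' \in \F_{m, \omega}}$ against the folded kernel $\sum_{k \in \LL_{m, \omega}} J_{i, j' + k}$, matching the contributions from the central copy with those from the translates, and finally symmetrizing in $i, j$ (the magnetic parts cancel exactly, since $\tilde w = w$ on $\F_{m, \omega} \supset \Gamma$). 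Because $w \ge u$ pointwise, both factors $u_i - w_i$ and $u_j - w_j$ are non-positive, so their product is non-negative; coupled with the manifest $J_{i, j + k} \ge 0$ and with the $G$-inequality above, this yields $H_\Gamma(w) \ge H_\Gamma(u)$, as required.

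The main obstacle is executing the bookkeeping for this identity cleanly: the $H$-side records the perturbation within the single copy $\Gamma$, while after periodization the $G$-side records it in every translate $k + \Gamma$ together with cross-interactions between distinct translates, and one must carefully disentangle these contributions and reassemble the residue as a sign-definite quadratic form over $\Gamma$. Once this is done, the non-negativity of the right-hand side (which is the whole point of routing the comparison through the one-sided case $w \ge u$, rather than attacking a generic $v$ directly) delivers the proposition.
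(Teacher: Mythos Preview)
Your plan is correct and follows essentially the same route as the paper: reduce to one-sided perturbations via the min/max trick (the paper's~\eqref{phisign}), pass to the $(m,\omega)$-periodic extension, and exploit that the discrepancy between the $H$-difference and the $G$-difference is a quadratic form in $\varphi = w - u$ with non-negative coefficients, hence sign-definite when $\varphi$ has a sign. The paper carries this out as an inequality (its~\eqref{GHclaim2}) routed through the auxiliary functional~$\widetilde{H}_{\F_{m,\omega}^{A,B}}$ on the full fundamental domain, whereas you package the same cancellation directly as an exact identity on~$H_\Gamma$; your formulation is arguably cleaner, but the content is the same.
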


\begin{proof}
Let~$v$ be a configuration that coincides with~$u$ outside~$\F_{m, \omega}^{A, B}$. We claim that
\begin{equation} \label{GHclaim1}
\widetilde{H}_{\F_{m, \omega}^{A, B}}(u) \le \widetilde{H}_{\F_{m, \omega}^{A, B}}(v),
\end{equation}
where, for any configuration~$w$, we 
set\footnote{Note that~$\widetilde{H}_{\F_{m, \omega}^{A, B}}$ 
differs from~$H_{\F_{m, \omega}}$ only with respect to the region 
over which the magnetic term~$B$ is extended. We take into 
account this slight modification, since~$B_{\F_{m, \omega}}$ 
might not be well-defined even under assumption~\eqref{hflux0}, 
as the set~$\F_{m, \omega}$ is not finite. 

The main reason to consider the auxiliary
functional~$\widetilde{H}_{\F_{m, \omega}^{A, B}}$ is due 
to the presence of the magnetic term, which is not null in general
but it has zero average on a particular domain. 
To take advantage of this feature, one can either select an appropriate 
order of summation, or perform a truncation argument. We chose to follow 
this latter strategy.} (recall notations~\eqref{Idef} and~\eqref{Bdef})
\begin{equation} \label{Htildef}
\widetilde{H}_{\F_{m, \omega}^{A, B}}(w) := I_{\F_{m, \omega}, \, \F_{m, \omega}}(w) 
+ 2 I_{\F_{m, \omega}, \, \Z^d \setminus \F_{m, \omega}}(w) + B_{\F_{m, \omega}^{A, B}}(w).
\end{equation}
To prove~\eqref{GHclaim1}, we write~$v = u + \varphi$, 
with~$\varphi: \Z^d \to \{ -2, 0, 2 \}$ such that~$\varphi_i = 0$ for any~$i \in \Z^d \setminus \F_{m, \omega}^{A, B}$. 
We first restrict ourselves to the case in which~$\varphi$ has a sign, i.e.
\begin{equation} \label{phisign}
\mbox{either } \varphi_i \ge 0 \mbox{ for any } i \in \Z^d, \mbox{ or } \varphi_i \le 0 \mbox{ for any } i \in \Z^d.
\end{equation}
Define~$\tilde{v}$ and~$\tilde{\varphi}$ as the~$(m, \omega)$-periodic extensions of~$v|_{\F_{m, \omega}}$ and~$\varphi|_{\F_{m, \omega}}$, respectively. That is,
\begin{equation}\label{star}
\tilde{v}_{i + k} := v_i \quad {\mbox{ and }}\quad 
\tilde{\varphi}_{i + k} := \varphi_i \quad \mbox{for any } i \in \F_{m, \omega} \mbox{ and } k \in \LL_{m, \omega}.
\end{equation}
Notice that~$\tilde{v} \in \A_{m, \omega}^{A, B}$.

We now compare the functionals~$G_{m,\omega}^{A,B}$ and~$\widetilde{H}_{\F_{m,\omega}^{A,B}}$, 
when evaluated at~$u$,~$v$ and~$u$,~$\tilde{v}$, respectively. We claim that
\begin{equation} \label{GHclaim2}
\widetilde{H}_{\F_{m, \omega}^{A, B}}(u) - \widetilde{H}_{\F_{m, \omega}^{A, B}}(v) \le G_{m, \omega}^{A, B}(u) - G_{m, \omega}^{A, B}(\tilde{v}).
\end{equation}

To check the validity of~\eqref{GHclaim2}, we begin by evaluating the contributions coming from the magnetic field. Recalling the definitions of~$v$ and~$\tilde{v}$, we have
\begin{equation} \label{techGH1}
B_{\F_{m, \omega}^{A, B}}(u) - B_{\F_{m, \omega}^{A, B}}(v) = B_{\F_{m, \omega}^{A, B}}(u) - B_{\F_{m, \omega}^{A, B}}(\tilde{v}).
\end{equation}
We now address the interaction terms. Let~$(i, j) \in \Z^{2 d} \setminus (\Z^d \setminus \F_{m, \omega})^2$. 
If~$i \in \F_{m, \omega}$, then~$\tilde{v}_i = v_i$. Hence,
\begin{equation} \label{techGH2}
I_{\F_{m, \omega}, \, \F_{m, \omega}}(u) - I_{\F_{m, \omega}, \, \F_{m, \omega}}(v) = I_{\F_{m, \omega}, \, \F_{m, \omega}}(u) - I_{\F_{m, \omega}, \, \F_{m, \omega}}(\tilde{v}).
\end{equation}
On the other hand, if~$i \in \F_{m, \omega}$ and~$j \in \Z^d \setminus \F_{m, \omega}$, 
then we can write~$j = j' + k$, with~$j' \in \F_{m, \omega}$ and~$k \in \LL_{m, \omega} \setminus \{ 0 \}$ uniquely determined. 
Notice that~$i-k\not\in\F_{m, \omega}$, therefore~$u_i=u_{i-k}=v_{i-k}$, due to~\eqref{starstar}. 
Therefore, using again~\eqref{starstar} and~\eqref{star},
\begin{align*}
1 - v_i v_j & = 1 - \tilde{v}_i \tilde{v}_j + v_i( \tilde{v}_j - u_j ) \\
& = (1 - \tilde{v}_i \tilde{v_j}) + v_i \varphi_{j'} \\
& = (1 - \tilde{v}_i \tilde{v_j}) + (1 - u_i u_{j'}) - (1 - u_i v_{j'}) + \varphi_i \varphi_{j'} \\
& = (1 - \tilde{v}_i \tilde{v_j}) + (1 - u_i u_j) - (1 - v_{i - k} v_{j'}) + \varphi_i \varphi_{j'}.
\end{align*}
Then, by taking advantage of~\eqref{Jper} and~\eqref{phisign}, we have
\begin{align*}
I_{\F_{m, \omega}, \, \Z^d \setminus \F_{m, \omega}}(v) & = I_{\F_{m, \omega}, \, \Z^d \setminus \F_{m, \omega}}(\tilde{v}) + I_{\F_{m, \omega}, \, \Z^d \setminus \F_{m, \omega}}(u) \\
& \quad - \sum_{k \in \LL_{m, \omega} \setminus \{ 0 \}} \sum_{i, j' \in \F_{m, \omega}} \left[ J_{(i - k) j'} (1 - v_{i - k} v_{j'}) - J_{i (j' + k)} \varphi_i \varphi_{j'} \right] \\
& \ge I_{\F_{m, \omega}, \, \Z^d \setminus \F_{m, \omega}}(\tilde{v}) + I_{\F_{m, \omega}, \, \Z^d \setminus \F_{m, \omega}}(u) - I_{\F_{m, \omega}, \, \Z^d \setminus \F_{m, \omega}}(v),
\end{align*}
that may be in turn rewritten as
$$
2 \left[ I_{\F_{m, \omega}, \, \Z^d \setminus \F_{m, \omega}}(u) - I_{\F_{m, \omega}, \, \Z^d \setminus \F_{m, \omega}}(v) \right] \le I_{\F_{m, \omega}, \, \Z^d \setminus \F_{m, \omega}}(u) - I_{\F_{m, \omega}, \, \Z^d \setminus \F_{m, \omega}}(\tilde{v}).
$$
By this,~\eqref{techGH2},~\eqref{techGH1} and the definition~\eqref{Htildef} 
of~$\widetilde{H}^{A,B}_{m,\omega}$, claim~\eqref{GHclaim2} follows immediately. 

As a consequence of~\eqref{GHclaim2}, since~$u \in \M_{m, \omega}^{A, B}$ and~$\tilde{v} \in \A_{m, \omega}^{A, B}$, 
we deduce inequality~\eqref{GHclaim1} under the sign assumption~\eqref{phisign} on~$\varphi$.

In order to finish the proof of the proposition, we now only need to show that~\eqref{phisign} is in fact unnecessary for the validity of~\eqref{GHclaim1}. 
To do this, we consider a general~$v = u + \varphi$ and define~$\varphi_+ := \max \{ \varphi, 0 \}$ and~$\varphi_- := \min \{ \varphi, 0 \}$. 
Both~$\varphi_+$ and~$\varphi_-$ satisfy~\eqref{phisign} and therefore
$$
2 \widetilde{H}_{\F_{m, \omega}^{A, B}}(u) \le \widetilde{H}_{\F_{m, \omega}^{A, B}}(u + \varphi_+) + \widetilde{H}_{\F_{m, \omega}^{A, B}}(u + \varphi_-).
$$
But then, by Lemma~\ref{minmaxdecreaselem}, we have
\begin{align*}
\widetilde{H}_{\F_{m, \omega}^{A, B}}(u + \varphi_+) + \widetilde{H}_{\F_{m, \omega}^{A, B}}(u + \varphi_-) & = \widetilde{H}_{\F_{m, \omega}^{A, B}}(\max \{ u, v \}) + \widetilde{H}_{\F_{m, \omega}^{A, B}}(\min \{ u, v \}) \\
& \le \widetilde{H}_{\F_{m, \omega}^{A, B}}(u) + \widetilde{H}_{\F_{m, \omega}^{A, B}}(v),
\end{align*}
and~\eqref{GHclaim1} follows. 

Thanks to~\eqref{GHclaim1}, by arguing as in Remark~\ref{minincrmk} one can conclude the proof of Proposition~\ref{GminisHmin}.
\end{proof}

\subsection{The minimal minimizer}

We now select a specific element of~$\M_{m, \omega}^{A, B}$ that will be proved to have further minimizing properties in the forthcoming subsections. 
To do this, we recall the definitions given in~\eqref{minmax}, and 
we introduce the main ingredient of this subsection and discuss its minimizing properties. 
We define the~\emph{minimal minimizer}~$u_{m, \omega}^{A, B}$ as the minimum within the (finite) class~$\M_{m, \omega}^{A, B}$. That is, we set
$$
\left( u_{m, \omega}^{A, B} \right)_i := \min \Big\{ u_i : u \in \M_{m, \omega}^{A, B} \Big\},
$$
for any~$i \in \Z^d$. Clearly,~$u_{m, \omega}^{A, B}$ belongs to the class~$\A_{m, \omega}^{A, B}$ of admissible configurations. 
To check that~$u_{m, \omega}^{A, B}$ is actually a minimizer, we first need an auxiliary lemma.

More precisely, by applying Lemma~\ref{minmaxdecreaselem} to minimizers of~$G^{A,B}_{m,\omega}$, 
we see that the operations of minimum and maximum are closed in the set~$\M_{m, \omega}^{A, B}$. 
A thorough proof of this fact is contained in the next result.

\begin{lemma} \label{minmaxinMlem}
Let~$u, v \in \M_{m, \omega}^{A, B}$. Then,~$\min \{ u, v \}, \max \{ u, v \} \in \M_{m, \omega}^{A, B}$.
\end{lemma}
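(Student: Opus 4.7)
The plan is to verify that both $\min\{u,v\}$ and $\max\{u,v\}$ lie in the admissible class $\A_{m,\omega}^{A,B}$, to exploit Lemma~\ref{minmaxdecreaselem} on the interaction part of $G_{m,\omega}^{A,B}$, and to observe that the magnetic part splits exactly as a sum. The minimality will then follow by a simple averaging argument.

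First I would check admissibility. Since $u,v \in \P_{m,\omega}$, it is immediate from~\eqref{minmax} that $\min\{u,v\}$ and $\max\{u,v\}$ are also $(m,\omega)$-periodic. Moreover, on $\F_{m,\omega}^{A,-}$ we have $u_i = v_i = 1$, so both $\min$ and $\max$ equal $1$ there; similarly both equal $-1$ on $\F_{m,\omega}^{B,+}$. Hence $\min\{u,v\}, \max\{u,v\} \in \A_{m,\omega}^{A,B}$.

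Next, I would decompose $G_{m,\omega}^{A,B}$ as in~\eqref{Gdef} into the interaction term $I_{\F_{m,\omega},\,\Z^d}$ and the magnetic term $B_{\F_{m,\omega}^{A,B}}$. Applying Lemma~\ref{minmaxdecreaselem} with $\Gamma = \F_{m,\omega}$ and $\Omega = \Z^d$ yields
$$
I_{\F_{m,\omega},\,\Z^d}(\min\{u,v\}) + I_{\F_{m,\omega},\,\Z^d}(\max\{u,v\}) \le I_{\F_{m,\omega},\,\Z^d}(u) + I_{\F_{m,\omega},\,\Z^d}(v).
$$
On the other hand, using the elementary identity $\min\{a,b\} + \max\{a,b\} = a + b$ for real numbers applied pointwise, together with the definition~\eqref{Bdef}, one gets the exact equality
$$
B_{\F_{m,\omega}^{A,B}}(\min\{u,v\}) + B_{\F_{m,\omega}^{A,B}}(\max\{u,v\}) = B_{\F_{m,\omega}^{A,B}}(u) + B_{\F_{m,\omega}^{A,B}}(v).
$$
Adding the two displays gives
$$
G_{m,\omega}^{A,B}(\min\{u,v\}) + G_{m,\omega}^{A,B}(\max\{u,v\}) \le G_{m,\omega}^{A,B}(u) + G_{m,\omega}^{A,B}(v).
$$

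Finally, denote by $\mathfrak{m}$ the minimum value of $G_{m,\omega}^{A,B}$ on $\A_{m,\omega}^{A,B}$. Since $u,v \in \M_{m,\omega}^{A,B}$, the right-hand side equals $2\mathfrak{m}$, while by admissibility of $\min\{u,v\}$ and $\max\{u,v\}$ each summand on the left is at least $\mathfrak{m}$. Both must then equal $\mathfrak{m}$, and the conclusion follows. There is no real obstacle here: the only point requiring a moment of care is the observation that the magnetic term is linear in $u_i$, so it passes to the sum $\min+\max$ as an equality, which is what makes the lattice argument work despite the presence of the external field.
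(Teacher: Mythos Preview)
Your proof is correct and follows essentially the same route as the paper's: both derive the key inequality $G_{m,\omega}^{A,B}(\min\{u,v\}) + G_{m,\omega}^{A,B}(\max\{u,v\}) \le G_{m,\omega}^{A,B}(u) + G_{m,\omega}^{A,B}(v)$ from Lemma~\ref{minmaxdecreaselem} and then conclude by comparing with the minimum value over the admissible class. You simply spell out in more detail what the paper leaves implicit, namely the admissibility check and the fact that the magnetic term, being linear in $u_i$, satisfies $B(\min)+B(\max)=B(u)+B(v)$ exactly.
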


\begin{proof}
Recalling~\eqref{Gdef}, by Lemma~\ref{minmaxdecreaselem}, one has
$$
G_{m, \omega}^{A, B}(\min \{ u, v \}) + G_{m, \omega}^{A, B}(\max \{ u, v \}) \le G_{m, \omega}^{A, B}(u) + G_{m, \omega}^{A, B}(v).
$$
Moreover, since~$\min \{u, v\}, \max \{u, v\} \in \A_{m, \omega}^{A, B}$, we easily deduce that
$$
G_{m, \omega}^{A, B}(\min \{ u, v \}), G_{m, \omega}^{A, B}(\max \{ u, v \}) \ge G_{m, \omega}^{A, B}(u) = G_{m, \omega}^{A, B}(v),
$$
and the thesis follows.
\end{proof}

By iterating Lemma~\ref{minmaxinMlem}, we finally obtain the minimality of the minimal minimizer~$u_{m, \omega}^{A, B}$.

\begin{corollary}\label{coro:min}
$u_{m, \omega}^{A, B} \in \M_{m, \omega}^{A, B}$.
\end{corollary}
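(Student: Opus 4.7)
The plan is to deduce the corollary by a finite iteration of Lemma~\ref{minmaxinMlem}. The key observation is that the class $\M_{m,\omega}^{A,B}$ is finite. Indeed, every configuration in $\A_{m,\omega}^{A,B}$ is $(m,\omega)$-periodic and therefore uniquely determined by its values on a fundamental domain $\F_{m,\omega}$ of $\Z^d/\LL_{m,\omega}$, which is a finite set; since each site takes values in $\{-1,+1\}$, there are at most finitely many admissible configurations, and a fortiori finitely many elements of $\M_{m,\omega}^{A,B}$.

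Write $\M_{m,\omega}^{A,B}=\{u^{(1)},\ldots,u^{(N)}\}$ for some $N\in\N$. Define recursively
$$
v^{(1)}:=u^{(1)},\qquad v^{(k+1)}:=\min\bigl\{v^{(k)},\,u^{(k+1)}\bigr\}\quad\text{for }k=1,\ldots,N-1,
$$
where the minimum is taken sitewise as in~\eqref{minmax}. I would then prove by induction on $k$ that $v^{(k)}\in\M_{m,\omega}^{A,B}$: the base case $k=1$ is immediate, and the inductive step follows directly from Lemma~\ref{minmaxinMlem} applied to $v^{(k)}$ and $u^{(k+1)}$, both of which lie in $\M_{m,\omega}^{A,B}$ by the inductive hypothesis. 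In particular, $v^{(N)}\in\M_{m,\omega}^{A,B}$.

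Finally, by its very definition, the configuration $u_{m,\omega}^{A,B}$ coincides sitewise with the pointwise minimum of $u^{(1)},\ldots,u^{(N)}$, which is exactly $v^{(N)}$. Hence $u_{m,\omega}^{A,B}=v^{(N)}\in\M_{m,\omega}^{A,B}$, proving the corollary.

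The argument is essentially a bookkeeping exercise once Lemma~\ref{minmaxinMlem} is available; the only subtle point to verify is the finiteness of $\M_{m,\omega}^{A,B}$, which follows at once from the $(m,\omega)$-periodicity encoded in~\eqref{starstar} together with the boundedness of a fundamental domain. No further obstacle is expected.
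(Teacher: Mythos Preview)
Your approach is exactly the paper's: it simply states that the corollary follows ``by iterating Lemma~\ref{minmaxinMlem}'', and your inductive construction $v^{(k+1)}=\min\{v^{(k)},u^{(k+1)}\}$ makes that iteration explicit.

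One small correction: the fundamental domain $\F_{m,\omega}$ of $\Z^d/\LL_{m,\omega}$ is \emph{not} a finite set, since $\LL_{m,\omega}$ has rank $d-1$ and the quotient is an infinite strip. The correct reason $\A_{m,\omega}^{A,B}$ is finite (already noted in the paper just before the definition of $\M_{m,\omega}^{A,B}$) is that each admissible configuration is prescribed to be $+1$ on $\F_{m,\omega}^{A,-}$ and $-1$ on $\F_{m,\omega}^{B,+}$, and is therefore determined by its values on the bounded region $\F_{m,\omega}^{A,B}$, which \emph{is} finite. This does not affect the rest of your argument.
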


\subsection{The doubling property}

The minimal minimizer introduced in the previous subsection enjoys important geometrical properties. 
The first of such properties is often referred to in the literature as~\emph{no-symmetry-breaking} or~\emph{doubling property}. 
It asserts that the minimal minimizers~$u_{m, \omega}^{A, B}$ corresponding to different multiplicities~$m \in \N$ do in fact all coincide.

In order to prove this result, the following notation will be helpful. 
Given any~$k \in \Z^d$, we define the translation~$\T_k u$ of a configuration~$u$ along the vector~$k$ as
\begin{equation}\label{TK}
\left( \T_k u \right)_i := u_{i - k},
\end{equation}
for any~$i \in \Z^d$.

Also, from now on, we drop reference to the multiplicity~$m$ when we deal with objects for which~$m = 1$. 
That is, we write e.g.~$\F_\omega, G_\omega^{A, B}, \M_\omega^{A, B}, u_\omega^{A, B}$ instead 
of~$\F_{1, \omega}, G_{1, \omega}^{A, B}, \M_{1, \omega}^{A, B}, u_{1, \omega}^{A, B}$.

The doubling property for the minimal minimizer is proved in the following result. 

\begin{proposition} \label{umomega=uomega}
$u_{m, \omega}^{A, B} = u_\omega^{A, B}$, for any~$m \in \N$.
\end{proposition}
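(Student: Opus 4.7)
The plan is to establish the $\LL_\omega$-periodicity of $u_{m,\omega}^{A,B}$ and then exploit it to transfer minimality between the functionals $G_{m,\omega}^{A,B}$ and $G_\omega^{A,B}$.

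First, I would observe that for every $k\in\LL_\omega$, the translate $\T_k u_{m,\omega}^{A,B}$ still lies in $\M_{m,\omega}^{A,B}$. Indeed, $(m,\omega)$-periodicity is preserved by any translation; admissibility is preserved because $\omega\cdot k=0$ keeps the defining half-space conditions unchanged; and $G_{m,\omega}^{A,B}$ is invariant under $\T_k$ since $k\in\LL_\omega\subset\tau\Z^d$ and both $J$ and $h$ are $\tau\Z^d$-periodic by \eqref{Jper}--\eqref{hper}. Then Lemma~\ref{minmaxinMlem} yields $\min\{u_{m,\omega}^{A,B},\T_k u_{m,\omega}^{A,B}\}\in\M_{m,\omega}^{A,B}$, and the defining property of $u_{m,\omega}^{A,B}$ as the pointwise minimum of $\M_{m,\omega}^{A,B}$ forces $u_{m,\omega}^{A,B}\le \T_k u_{m,\omega}^{A,B}$. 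Repeating the argument with $-k\in\LL_\omega$ gives the reverse inequality, hence $\T_k u_{m,\omega}^{A,B}=u_{m,\omega}^{A,B}$, i.e., $u_{m,\omega}^{A,B}\in\A_\omega^{A,B}$.

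Next, I would derive the scaling relation $G_{m,\omega}^{A,B}(w)=m^{d-1}G_\omega^{A,B}(w)$ valid for every $\LL_\omega$-periodic configuration $w$. This follows by decomposing $\F_{m,\omega}$ as the disjoint union of $m^{d-1}$ translates of $\F_\omega$ along representatives of $\LL_\omega/\LL_{m,\omega}$, and then using $\tau\Z^d$-periodicity of $J$ and $h$ combined with $\LL_\omega$-periodicity of $w$ to see that each of the $m^{d-1}$ pieces in the decomposed sums contributes identically. Together with the inclusion $\A_\omega^{A,B}\subset\A_{m,\omega}^{A,B}$ and the minimality of $u_{m,\omega}^{A,B}$ in $\M_{m,\omega}^{A,B}$, this shows that $u_{m,\omega}^{A,B}\in\M_\omega^{A,B}$, and the defining property of $u_\omega^{A,B}$ gives $u_\omega^{A,B}\le u_{m,\omega}^{A,B}$. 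For the reverse, observe that $u_{m,\omega}^{A,B}$ and $u_\omega^{A,B}$, both lying in $\M_\omega^{A,B}$, share the same optimal value of $G_\omega^{A,B}$, and hence of $G_{m,\omega}^{A,B}$ by the scaling; since $u_\omega^{A,B}\in\A_{m,\omega}^{A,B}$, this forces $u_\omega^{A,B}\in\M_{m,\omega}^{A,B}$, and minimality of $u_{m,\omega}^{A,B}$ in $\M_{m,\omega}^{A,B}$ yields $u_{m,\omega}^{A,B}\le u_\omega^{A,B}$.

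The main technical point is the careful verification of the identity $G_{m,\omega}^{A,B}=m^{d-1}G_\omega^{A,B}$ on $\LL_\omega$-periodic configurations: this requires tracking the double sum $I_{\F_{m,\omega},\Z^d}$ and the magnetic sum $B_{\F_{m,\omega}^{A,B}}$ through the decomposition of $\F_{m,\omega}$, using $\tau\Z^d$-periodicity to recognize that each translate produces an identical contribution. Once that bookkeeping is secured, the remainder of the proof reduces to a clean interplay between admissibility inclusions, the two different minimality notions, and the basic closure of minimizers under $\min$ and $\max$ provided by Lemma~\ref{minmaxinMlem}.
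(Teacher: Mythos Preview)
Your proof is correct and follows essentially the same strategy as the paper's. The only cosmetic difference is that the paper packages your first step into the auxiliary configuration $v:=\min\{\T_k u_{m,\omega}^{A,B}:k\in\LL_\omega\}$, which is automatically $\LL_\omega$-periodic, and then shows $v\in\M_{m,\omega}^{A,B}$ via Lemma~\ref{minmaxinMlem}; you instead argue directly that each $\T_k u_{m,\omega}^{A,B}$ lies in $\M_{m,\omega}^{A,B}$ and invoke the minimal-minimizer property, which yields $u_{m,\omega}^{A,B}=\T_k u_{m,\omega}^{A,B}$ (and hence $u_{m,\omega}^{A,B}=v$). From there both arguments run identically through the scaling identity $G_{m,\omega}^{A,B}=m^{d-1}G_\omega^{A,B}$ on $\LL_\omega$-periodic configurations, exactly as in \eqref{doubtech}, to transfer minimality back and forth.
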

\begin{proof}
Let~$m \ge 2$. We define the configuration
$$
v:= \min \Big\{ \T_k u_{m, \omega}^{A, B} : k \in \LL_\omega \Big\}.
$$
Clearly,~$v \in \A_\omega^{A, B} \subset \A_{m, \omega}^{A, B}$. 
Furthermore, as~$\T_k u_{m, \omega}^{A, B} \in \M_{m, \omega}^{A, B}$ for any~$k \in \LL_\omega$, 
by applying Lemma~\ref{minmaxinMlem} we also obtain\footnote{In this regard, observe that the family 
of configurations appearing in the definition of~$v$ is actually finite, thanks to the periodicity of~$u_{m, \omega}^{A, B}$.} 
that~$v \in \M_{m, \omega}^{A, B}$. Since~$u_\omega^{A, B} \in \A_{m, \omega}^{A, B}$, 
recalling the definition~\eqref{Gdef} of the functional~$G^{A,B}_{m,\omega}$, we compute
\begin{equation} \label{doubtech}
G_\omega^{A, B}(v) = \frac{1}{m^{d - 1}} \, G_{m, \omega}^{A, B}(v) \le \frac{1}{m^{d - 1}} G_{m, \omega}^{A, B}(u_\omega^{A, B}) = G_\omega^{A, B}(u_\omega^{A, B}).
\end{equation}
Accordingly, by Corollary~\ref{coro:min}, we deduce that
\begin{equation}\label{disp}
v \in \M_\omega^{A, B}
\end{equation}
and hence~$u_\omega^{A, B} \le v$, by definition of minimal minimizer. 
In particular, we conclude that
\begin{equation}\label{pyurv}
u_\omega^{A, B} \le u_{m, \omega}^{A, B}.\end{equation}

To check the validity of the converse inequality it suffices to notice that, in light of~\eqref{disp}, 
the first and the last terms of~\eqref{doubtech} are equal. Consequently, the middle 
inequality in~\eqref{doubtech} is indeed an identity and thus~$u_\omega^{A, B} \in \M_{m, \omega}^{A, B}$. 
Therefore,~$u_{m, \omega}^{A, B} \le u_\omega^{A, B}$. This and~\eqref{pyurv} 
imply the desired result.
\end{proof}

As a corollary of the doubling property and Proposition~\ref{GminisHmin}, 
we immediately deduce that the minimal minimizer is a local minimizer in the whole~\emph{strip}
\begin{equation} \label{Somegadef}
\S_\omega^{A, B} := \Big\{ i \in \Z^d : \omega \cdot i \in [A, B] \Big\}.
\end{equation}

\begin{corollary} \label{GHcor}
The minimal minimizer~$u_\omega^{A, B}$ is a minimizer for~$H$ in every finite subset~$\Gamma$ of~$\S_\omega^{A, B}$.
\end{corollary}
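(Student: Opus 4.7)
The plan is to obtain Corollary~\ref{GHcor} as a direct consequence of chaining together three previously established results: the local minimization property of $\M_{m,\omega}^{A,B}$-minimizers (Proposition~\ref{GminisHmin}), the doubling property for minimal minimizers (Proposition~\ref{umomega=uomega}), and the compatibility of minimization with set inclusion (Remark~\ref{minincrmk}). The underlying idea is simple: the minimal minimizer associated to multiplicity $1$ is, by the doubling result, the same configuration as the minimal minimizer for any multiplicity $m$; by taking $m$ large we can enlarge the fundamental cell enough to swallow any prescribed finite subset of the strip.

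More precisely, given a finite set $\Gamma \subset \S_\omega^{A,B}$, the first step is geometric: I would show that for $m \in \N$ sufficiently large the fundamental domain $\F_{m,\omega}$ can be chosen (up to a translation by an element of $\Z^d$) so that $\Gamma \subset \F_{m,\omega}^{A,B}$. This is true because $\LL_{m,\omega} = m\LL_\omega$ is a sublattice of $\tau\Z^d$ whose rank spans the hyperplane orthogonal to $\omega$, so a fundamental parallelepiped for $\Z^d/\LL_{m,\omega}$ looks like an infinite tube in the $\omega$-direction with cross-section growing linearly in $m$; intersecting with the slab $\{A \le \omega \cdot i/|\omega| \le B\}$ yields the finite set $\F_{m,\omega}^{A,B}$, whose diameter in the hyperplane directions tends to infinity as $m \to \infty$. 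A suitable translate of this cell therefore contains the finite set $\Gamma$ for all $m$ large enough.

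Once this inclusion $\Gamma \subset \F_{m,\omega}^{A,B}$ is secured, the remainder is purely a matter of composing known facts. Corollary~\ref{coro:min} gives $u_{m,\omega}^{A,B} \in \M_{m,\omega}^{A,B}$, so Proposition~\ref{GminisHmin} ensures that $u_{m,\omega}^{A,B}$ is a minimizer for $H$ in $\F_{m,\omega}^{A,B}$. The doubling property (Proposition~\ref{umomega=uomega}) identifies $u_{m,\omega}^{A,B}$ with $u_\omega^{A,B}$, so in fact $u_\omega^{A,B}$ is itself a minimizer for $H$ in $\F_{m,\omega}^{A,B}$. Finally, invoking Remark~\ref{minincrmk} with the inclusion $\Gamma \subseteq \F_{m,\omega}^{A,B}$ yields that $u_\omega^{A,B}$ is a minimizer for $H$ in $\Gamma$, which is the desired conclusion.

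The only genuinely nontrivial step is the geometric one of fitting $\Gamma$ inside $\F_{m,\omega}^{A,B}$ for some $m$ and some choice of fundamental domain; the rest is essentially a citation to earlier results. I expect this geometric verification to be the mild technical point, but it is routine once one writes down the structure of $\LL_\omega$ as a sublattice of $\tau\Z^d$ of rank $d-1$ and uses that $\F_{m,\omega}$ can be translated freely. No new estimate or compactness argument is required.
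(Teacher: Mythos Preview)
Your proposal is correct and follows essentially the same route as the paper's own proof: choose $m$ large enough so that $\Gamma \subseteq \F_{m,\omega}^{A,B}$, then combine Proposition~\ref{GminisHmin}, Proposition~\ref{umomega=uomega}, and Remark~\ref{minincrmk}. The only difference is that you spell out the geometric reason why such an $m$ and fundamental domain exist, which the paper simply asserts.
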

\begin{proof}
Given any finite~$\Gamma \subset \S_\omega^{A, B}$, we may find a large enough~$m \in \N$ 
and a fundamental region~$\F_{m, \omega}$ for which~$\Gamma \subseteq \F_{m, \omega}^{A, B}$.
 By Propositions~\ref{GminisHmin} and~\ref{umomega=uomega},~$u_\omega^{A, B} = u_{m, \omega}^{A, B}$ 
is a minimizer for~$H$ in~$\F_{m, \omega}^{A, B}$ and the result follows by recalling Remark~\ref{minincrmk}.
\end{proof}

\subsection{The Birkhoff property}

Here, we concentrate on another property of the minimal minimizer
(that is also related to a similar feature in dynamical systems): the~\emph{Birkhoff property}. 
This trait essentially refers to a kind of discrete monotonicity of~$u_\omega^{A, B}$.

Recalling the notation introduced in the previous subsection (in particular~\eqref{TK}), 
we may state the validity of the Birkhoff property for the minimal minimizer as follows.

\begin{proposition} \label{Birkhoffprop}
Let~$k \in \tau \Z^d$. Then,
\begin{equation} \label{Birkhoff}
\begin{aligned}
\T_k u_\omega^{A, B} & \le u_\omega^{A, B} \quad \mbox{if } \omega \cdot k \le 0, \\
\T_k u_\omega^{A, B} & \ge u_\omega^{A, B} \quad \mbox{if } \omega \cdot k \ge 0.
\end{aligned}
\end{equation}
\end{proposition}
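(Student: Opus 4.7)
The strategy is a discrete, magnetized version of the classical min/max comparison argument for planelike minimizers. I focus on the case $\omega\cdot k<0$; the case $\omega\cdot k=0$ is handled directly by applying minimal-minimality to both $\T_k u_\omega^{A,B}$ and $\T_{-k}u_\omega^{A,B}$ (which lie in $\M_\omega^{A,B}$ because $k\in\tau\Z^d$ and $J,h$ are $\tau$-periodic), and the case $\omega\cdot k>0$ reduces to $\omega\cdot k<0$ by exchanging $k$ with $-k$. Set $u:=u_\omega^{A,B}$, $A':=A+\omega\cdot k/|\omega|<A$, $B':=B+\omega\cdot k/|\omega|<B$. A short check using the $\tau$-periodicity of $J$ and $h$ together with the $(1,\omega)$-periodicity of admissible configurations shows that $\T_k$ is an energy-preserving bijection from $\A_\omega^{A,B}$ onto $\A_\omega^{A',B'}$ which carries $\M_\omega^{A,B}$ onto $\M_\omega^{A',B'}$; in particular $\T_k u\in\M_\omega^{A',B'}$. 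I then form $v:=\min\{u,\T_k u\}$ and $w:=\max\{u,\T_k u\}$ and read off from the boundary conditions that $v\in\A_\omega^{A',B'}$ and $w\in\A_\omega^{A,B}$.

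The heart of the argument is the \emph{cross} sub-additivity
\begin{equation*}
G_\omega^{A,B}(w)+G_\omega^{A',B'}(v)\le G_\omega^{A,B}(u)+G_\omega^{A',B'}(\T_k u).
\end{equation*}
For the interaction piece this is immediate from Lemma~\ref{minmaxdecreaselem} applied with $\Gamma=\F_\omega$, $\Omega=\Z^d$. For the magnetic piece, the pointwise identity $v+w=u+\T_k u$ collapses the difference of the two sides to a sum over the symmetric difference $\F_\omega^{A,B}\triangle\F_\omega^{A',B'}$; on either component of this symmetric difference the strip constraints force either $u_i=1$ or $(\T_k u)_i=-1$, so that $\max\{(\T_k u)_i-u_i,0\}=0$ and the magnetic mismatch vanishes. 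Combining the interaction bound and this exact magnetic identity with the minimality of $u$ in $\A_\omega^{A,B}$ and of $\T_k u$ in $\A_\omega^{A',B'}$ forces equality throughout the sub-additivity, and in particular $v\in\M_\omega^{A',B'}$.

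To close, I would identify $\T_k u$ with the minimal minimizer $u_\omega^{A',B'}$: minimal-minimality in $\M_\omega^{A',B'}$ yields $u_\omega^{A',B'}\le\T_k u$, while $\T_{-k}u_\omega^{A',B'}\in\M_\omega^{A,B}$ combined with minimal-minimality of $u$ in $\M_\omega^{A,B}$ yields $u\le\T_{-k}u_\omega^{A',B'}$, i.e.\ $\T_k u\le u_\omega^{A',B'}$. Hence $\T_k u=u_\omega^{A',B'}$, and since $v\in\M_\omega^{A',B'}$ with $v\le u$ by construction, one more application of minimal-minimality gives $\T_k u=u_\omega^{A',B'}\le v\le u$, which is the desired inequality. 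The main obstacle is the magnetic cancellation in the sub-additivity step: the interaction part shrinks automatically under min/max by Lemma~\ref{minmaxdecreaselem}, but the magnetic term is genuinely non-monotone under such rearrangement, and the argument only balances because one pairs $G$ with itself on the \emph{different} strips $[A,B]$ and $[A',B']$, confining the mismatch to a region where the spins are already pinned by the boundary data.
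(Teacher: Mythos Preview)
Your proof is correct and follows essentially the same route as the paper's: both identify $\T_k u_\omega^{A,B}$ with the minimal minimizer $u_\omega^{A',B'}$ on the shifted strip, form the min/max pair, use Lemma~\ref{minmaxdecreaselem} for the interaction together with the pinning of spins on $\F_\omega^{A,B}\triangle\F_\omega^{A',B'}$ to handle the magnetic term, and then invoke minimal-minimality. The only difference is organizational: you package the two steps of the paper (first $G_\omega^{A,B}(m)\le G_\omega^{A,B}(\T_k u)$ as in Lemma~\ref{minmaxinMlem}, then transfer to $G_\omega^{A',B'}$ via the observation $m=\T_k u$ on the symmetric difference) into a single ``cross'' sub-additivity inequality, and you spell out the identification $\T_k u_\omega^{A,B}=u_\omega^{A',B'}$ that the paper records as an observation.
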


\begin{proof}
We prove only the first inequality in~\eqref{Birkhoff}, the second being completely analogous.

Let~$k \in \tau \Z^d$ be such that~$\omega \cdot k \le 0$. 
Observe that~$\T_k u_{\omega}^{A, B} \in \A_\omega^{A + \omega \cdot k, B + \omega \cdot k}$ and that, actually,
\begin{equation}\label{2.12bis}
\T_k u_\omega^{A, B} = u_\omega^{A + \omega \cdot k, B + \omega \cdot k}. 
\end{equation}
Write~$m := \min \{ u_\omega^{A, B}, \T_k u_\omega^{A, B} \}$ and~$M := \max \{ u_\omega^{A, B}, \T_k u_\omega^{A, B} \}$. We have that~$m \in \A_\omega^{A + \omega \cdot k, B + \omega \cdot k}$ and~$M \in \A_\omega^{A, B}$. By arguing as in the proof of Lemma~\ref{minmaxinMlem}, we easily see that
\begin{equation} \label{techBirk}
G_\omega^{A, B}(m) \le G_\omega^{A, B}(\T_k u_\omega^{A, B}).
\end{equation}
We now claim that
\begin{equation}\label{2.13bis}
m_i = \left( \T_k u_\omega^{A, B} \right)_i \quad \mbox{for any } i \in \F_\omega^{A, B} \Delta \, \F_\omega^{A + \omega \cdot k, B + \omega \cdot k}.
\end{equation}
Indeed~$\left( \T_k u_\omega^{A, B} \right)_i = -1$ for 
any~$i \in \F_\omega^{B + \omega \cdot k, +} \supset \F_\omega^{A, B} \setminus \F_\omega^{A + \omega \cdot k, B + \omega \cdot k}$ and, 
on the other 
hand,~$u_\omega^{A, B} = 1$ for any~$i \in \F_\omega^{A, -} \supset \F_\omega^{A + \omega \cdot k, B + \omega \cdot k} \setminus \F_\omega^{A, B}$, 
which implies~\eqref{2.13bis}. 

Recalling definitions~\eqref{Gdef} and~\eqref{Bdef} and using formulas~\eqref{techBirk} and~\eqref{2.13bis}, we conclude that
\begin{align*}
G_\omega^{A + \omega \cdot k, B + \omega \cdot k}(m) & - G_\omega^{A + \omega \cdot k, B + \omega \cdot k}(\T_k u_\omega^{A, B}) \\
& = G_\omega^{A, B}(m) - G_\omega^{A, B}(\T_k u_\omega^{A, B}) \\
& \quad + B_{\F_\omega^{A + \omega \cdot k, B + \omega \cdot k} \setminus \F_\omega^{A, B}}(m - \T_k u_\omega^{A, B}) - B_{\F_\omega^{A, B} \setminus \F_\omega^{A + \omega \cdot k, B + \omega \cdot k}}(m - \T_k u_\omega^{A, B}) \\
& \le 0.
\end{align*}
Therefore, by~\eqref{2.12bis}, we have that~$m \in \M_\omega^{A + \omega \cdot k, B + \omega \cdot k}$ 
and~$\T_k u_\omega^{A, B} \le m$, as~$\T_k u_\omega^{A, B}$ is a minimal minimizer. The first inequality in~\eqref{Birkhoff} then follows.
\end{proof}

\subsection{An energy estimate}\label{sub:energy}

We collect in this subsection a rather general proposition, that quantifies 
the energy of the minimizers of~$H$ inside large cubes. 
We stress that no periodicity of the coefficients is necessary for the validity of the results presented here. 
That is,~\eqref{Jper} and~\eqref{hper} are not required to hold.

We begin by recalling the terminology adopted in footnote~\ref{cubesdef} at page~\pageref{cubesdef} for cubes in~$\Z^d$. Given~$\ell \in \N \cup \{ 0 \}$, we denote 
by~$Q_\ell$ the cube having sides made up of~$2 \ell + 1$ sites and center located at the origin, i.e.
\begin{equation} \label{cubeQdef}
Q_\ell := \{ - \ell, \ldots, -1, 0, 1, \ldots, \ell \}^d.
\end{equation}
A general cube centered at~$q \in \Z^d$ will be indicated with~$Q_\ell(q) := q + Q_\ell$. 
We also write~$S_\ell$ for the~\emph{boundary} of~$Q_\ell$, that is
\begin{equation}\begin{split}\label{boundaryS}
S_\ell & := Q_\ell \setminus Q_{\ell - 1} \quad \mbox{if } \ell \ge 1,\\
S_0 & := Q_0 = \{ 0 \}.
\end{split}\end{equation}
Again,~$S_\ell(q) := q + S_\ell$.

In order to obtain the energy estimate, we plan to compare the Hamiltonian~$H_{Q_\ell}$ 
of a minimizer in the cube~$Q_\ell$ with that of a suitable competitor. 
Such auxiliary function will be modeled on the configuration~$\psi^{(\ell)}$ defined by
$$
\left( \psi^{(\ell)} \right)_i := \begin{cases}
-1 & \quad \mbox{if } i \in Q_\ell, \\
 1 & \quad \mbox{if } i \in \Z^d \setminus Q_\ell.
\end{cases}
$$
Recalling~\eqref{sigmadef}, 
the following lemma provides an upper bound for the energy of~$\psi^{(\ell)}$.

\begin{lemma} \label{psienestlem}
There exists a constant~$C \ge 1$, depending only on~$d$,~$\mu$ and~$\tau$, for which
$$
H_{Q_\ell}(\psi^{(\ell)}) \le C \ell^{d - 1} \left( 1 + \sum_{m = 1}^{\ell+1} \sigma(m) \right).
$$
\end{lemma}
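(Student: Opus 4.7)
My plan is to split $H_{Q_\ell}(\psi^{(\ell)}) = I_{Q_\ell}(\psi^{(\ell)}) + B_{Q_\ell}(\psi^{(\ell)})$ and estimate each piece separately, the first by slicing $Q_\ell$ into concentric shells and the second by using the zero-flux hypothesis \eqref{hflux0} on a tiling of $Q_\ell$ by fundamental domains.

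For the interaction term, I would first observe that inside $Q_\ell$ all spins of $\psi^{(\ell)}$ are equal, so the intra-$Q_\ell$ contribution to $I_{Q_\ell}$ vanishes. Since the spins at interior and exterior sites have opposite sign, the cross term gives $1 - u_i u_j = 2$, yielding
$$
I_{Q_\ell}(\psi^{(\ell)}) \;=\; 4\sum_{i \in Q_\ell}\sum_{j \in \Z^d \setminus Q_\ell} J_{ij}.
$$
Writing $Q_\ell$ as the disjoint union of shells $S_{\ell-k}$, a site $i \in S_{\ell-k}$ has $\ell^\infty$-distance at least $k+1$ from $\Z^d \setminus Q_\ell$, so by definition \eqref{sigmadef} of $\sigma$ its total interaction with the exterior is at most $\sigma(k+1)$. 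Combined with $|S_{\ell-k}| \le C(\ell-k+1)^{d-1} \le C\ell^{d-1}$ (valid for $\ell \ge 1$), I would get
$$
I_{Q_\ell}(\psi^{(\ell)}) \;\le\; 4\sum_{k=0}^{\ell} |S_{\ell-k}|\,\sigma(k+1) \;\le\; C\ell^{d-1}\sum_{m=1}^{\ell+1}\sigma(m),
$$
which is exactly the shape we want.

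For the magnetic term $B_{Q_\ell}(\psi^{(\ell)}) = -\sum_{i \in Q_\ell} h_i$, the key observation is that for any $k \in \Z^d$ the shifted cube $k + \{0,1,\dots,\tau-1\}^d$ is a fundamental domain of $\Z^d/\tau\Z^d$, hence by \eqref{hflux0} the sum of $h$ over it is zero. Thus I would tile the largest possible sub-box of $Q_\ell$ of sidelength $N\tau$ (with $N = \lfloor (2\ell+1)/\tau\rfloor$) by such $\tau^d$-cubes, whose combined contribution vanishes. The remaining sites lie in a boundary shell of $Q_\ell$ of thickness $\le \tau$, so there are at most $C\tau\ell^{d-1}$ of them; using \eqref{hsup} then gives $|B_{Q_\ell}(\psi^{(\ell)})| \le C\mu\tau\ell^{d-1}$.

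Adding the two bounds and absorbing the constants (which depend on $d$, $\mu$, $\tau$ as allowed) into a single $C$ yields the claim. The only slightly delicate point is the magnetic estimate: one might be tempted to use periodicity of $h$ for the cancellation, but since the lemma is stated without \eqref{hper}, one must really exploit the stronger formulation of \eqref{hflux0} asserting vanishing flux over \emph{every} fundamental domain, not just one. Everything else is a straightforward volume-versus-surface counting on shells.
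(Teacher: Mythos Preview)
Your proposal is correct and follows essentially the same argument as the paper: the paper also splits $H_{Q_\ell}(\psi^{(\ell)})$ into $I_{Q_\ell}$ and $B_{Q_\ell}$, bounds the interaction term by the same shell decomposition (indexing shells by $m$ rather than $\ell-k$, but arriving at the identical sum $\ell^{d-1}\sum_{m=1}^{\ell+1}\sigma(m)$), and bounds the magnetic term by tiling $Q_\ell$ with fundamental domains of $\Z^d/\tau\Z^d$ and using~\eqref{hflux0} plus~\eqref{hsup} on the $O(\tau\ell^{d-1})$ leftover sites. Your closing remark about relying on~\eqref{hflux0} for \emph{every} fundamental domain rather than on~\eqref{hper} is exactly the point the paper makes when noting that the results of this subsection hold without periodicity of the coefficients.
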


\begin{proof}
First, observe that~$Q_\ell$ may be written as the disjoint union of a possibly empty family~$\GG$ 
of fundamental domains for the quotient~$\Z^d / \tau \Z^d$, leaving out at most~$N$ sites~$\{ i^{(n)} \}_{n = 1}^N$. 
It is not hard to see that we can take~$N \le c_1 \tau \ell^{d - 1}$, for some dimensional constant~$c_1 > 0$. 
Accordingly, recalling~\eqref{Bdef} and
using~\eqref{hflux0} and~\eqref{hsup}, we have
\begin{equation} \label{Bpsiest}
B_{Q_\ell}(\psi^{(\ell)}) = - \sum_{F \in \GG} \sum_{i \in F} h_i - \sum_{n = 1}^N h_{i^{(n)}} \le 0 + \mu N \le c_1 \mu \tau \ell^{d - 1}.
\end{equation}
We now estimate the interaction term~$I_{Q_\ell}$. Recalling definition~\eqref{sigmadef}, we compute
\begin{equation} \label{Ipsiest}
\begin{aligned}
I_{Q_\ell}(\psi^{(\ell)}) & = 4 \sum_{i \in Q_\ell, j \in \Z^d \setminus Q_\ell} J_{i j} = 4 \sum_{m = 0}^\ell \sum_{i \in S_m} \sum_{|j|_\infty \ge \ell + 1} J_{i j} \le 4 \sum_{m = 0}^\ell \sum_{i \in S_m} \sum_{|j - i|_\infty \ge \ell + 1 - m} J_{i j} \\
& \le 8 d \sum_{m = 0}^\ell (2 m + 1)^{d - 1} \sigma(\ell + 1 - m) \le c_2 \ell^{d - 1} \sum_{m = 1}^{\ell+1} \sigma(m),
\end{aligned}
\end{equation}
for some dimensional constant~$c_2 > 0$. The combination of~\eqref{Bpsiest} and~\eqref{Ipsiest} leads to the thesis.
\end{proof}

Now we show that each minimizer satisfies the same energy growth.

\begin{proposition} \label{enestprop}
Let~$u$ be a minimizer for~$H$ in~$Q_\ell(q)$, for some~$q \in \Z^d$ and~$\ell \in \N$. Then,
\begin{equation} \label{enest}
H_{Q_\ell(q)}(u) \le \bar{C} \ell^{d - 1} \left( 1 + \sum_{m = 1}^\ell \sigma(m) \right),
\end{equation}
for some constant~$\bar{C} \ge 1$ depending only on~$d$,~$\mu$ and~$\tau$.
\end{proposition}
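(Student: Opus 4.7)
I would prove~\eqref{enest} by testing the minimality of $u$ against a simple reference configuration modelled on the $\psi^{(\ell)}$ of Lemma~\ref{psienestlem}, so that the bound becomes an essentially immediate consequence of the estimate already performed there. Concretely, define the competitor $v:\Z^d\to\{-1,+1\}$ by setting $v_i := -1$ for $i \in Q_\ell(q)$ and $v_i := u_i$ for $i \in \Z^d \setminus Q_\ell(q)$. Since $v$ coincides with $u$ outside $Q_\ell(q)$, the minimality of $u$ gives $H_{Q_\ell(q)}(u) \le H_{Q_\ell(q)}(v)$, reducing the task to estimating the right-hand side.

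Next I would expand
\[
H_{Q_\ell(q)}(v) = I_{Q_\ell(q),Q_\ell(q)}(v) + 2\, I_{Q_\ell(q),\Z^d\setminus Q_\ell(q)}(v) + B_{Q_\ell(q)}(v).
\]
Because $v\equiv -1$ on $Q_\ell(q)$, the self-interaction vanishes; the cross term equals $\sum_{i\in Q_\ell(q),\,j\notin Q_\ell(q)} J_{ij}(1+u_j)$, which is bounded by $2\sum_{i\in Q_\ell(q),\,j\notin Q_\ell(q)} J_{ij}$ since $1+u_j\in\{0,2\}$; and the magnetic term is $B_{Q_\ell(q)}(v) = -\sum_{i\in Q_\ell(q)} h_i$. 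Thus both surviving pieces are controlled, up to a harmless factor of $2$, by the very same quantities one encounters in the proof of Lemma~\ref{psienestlem} when computing $H_{Q_\ell}(\psi^{(\ell)})$.

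At this stage I would simply re-run the argument of Lemma~\ref{psienestlem} on the translated cube $Q_\ell(q)$: stratify $Q_\ell(q)=\bigsqcup_{m=0}^{\ell} S_m(q)$, apply the translation-invariant tail bound $\sum_{j:\,|j-i|_\infty \ge \ell+1-m} J_{ij} \le \sigma(\ell+1-m)$ valid for any $i\in S_m(q)$, combine with $|S_m(q)|\le C (2m+1)^{d-1}$, and perform the change of index $n=\ell+1-m$ to recover a factor $\ell^{d-1}\sum_{n=1}^{\ell+1}\sigma(n)$. For the magnetic part, tile $Q_\ell(q)$ by integer translates of a fundamental domain of $\Z^d/\tau\Z^d$, killing each complete tile thanks to~\eqref{hflux0} and bounding the at most $O(\tau\ell^{d-1})$ leftover boundary sites by $\mu$ via~\eqref{hsup}. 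A final cosmetic adjustment replaces $\sum_{n=1}^{\ell+1}\sigma(n)$ by $\sum_{m=1}^{\ell}\sigma(m)$ using that $\sigma$ is non-increasing (so $\sigma(\ell+1)\le \sigma(1)\le \sum_{m=1}^\ell \sigma(m)$ when $\ell\ge 1$), at the expense of doubling the constant.

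The only delicate (but not at all serious) point is that in the cross-interaction the exterior values of $u$ enter through the factor $1+u_j$; however this factor is non-negative and bounded by $2$, so it can only improve the estimate, and the analysis reduces cleanly to the purely geometric bookkeeping already carried out in Lemma~\ref{psienestlem}. Consistently with the remark preceding the statement, the argument uses neither~\eqref{Jper} nor~\eqref{hper}, only~\eqref{hflux0} and~\eqref{hsup} for the magnetic field and the tail function $\sigma$ for the interaction.
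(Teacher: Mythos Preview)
Your proof is correct and in fact slightly more direct than the paper's. Both arguments start from the same competitor: your $v$ (equal to $-1$ on $Q_\ell(q)$ and to $u$ outside) is precisely the paper's $v=\min\{u,\psi^{(\ell)}\}$, and both use the minimality inequality $H_{Q_\ell(q)}(u)\le H_{Q_\ell(q)}(v)$. The difference is in how $H_{Q_\ell(q)}(v)$ is then controlled. You observe that since $v\equiv -1$ on the cube, the self-interaction vanishes and the cross term $\sum J_{ij}(1+u_j)$ is dominated by $2\sum J_{ij}$, so the computation reduces immediately to the geometric bookkeeping of Lemma~\ref{psienestlem}. The paper instead introduces $w=\max\{u,\psi^{(\ell)}\}$, uses Lemma~\ref{minmaxdecreaselem} together with the minimality inequality to deduce $H_{Q_\ell}(w)\le H_{Q_\ell}(\psi^{(\ell)})$, and then compares $u$ with $w$ via a separate cross-term estimate before invoking Lemma~\ref{psienestlem}. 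Your route thus bypasses the min/max inequality of Lemma~\ref{minmaxdecreaselem} entirely; the paper's detour through $w$ is unnecessary for this particular statement, and your argument yields the same bound with one fewer ingredient.
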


\begin{proof}
Without loss of generality, we may assume the center~$q$ to be the origin. Let~$\psi^{(\ell)}$ be the configuration 
considered in Lemma~\ref{psienestlem} and define~$v := \min \{ u, \psi^{(\ell)} \}$,~$w := \max \{ u, \psi^{(\ell)} \}$. Observe that~$v$ and~$u$ agree outside of~$Q_\ell$. Consequently, the minimality of~$u$ implies that
\begin{equation} \label{enesttech1}
H_{Q_\ell}(u) \le H_{Q_\ell}(v).
\end{equation}
Now we compare the energies of~$u$ and~$w$. As~$u$ coincides with~$w$ in~$Q_\ell$, we have
\begin{equation} \label{enesttech2}
I_{Q_\ell, \, Q_\ell}(u) = I_{Q_\ell, \, Q_\ell}(w) \quad \mbox{and} \quad B_{Q_\ell}(u) = B_{Q_\ell}(w).
\end{equation}
On the other hand, by taking advantage of the computation~\eqref{Ipsiest},
\begin{align*}
I_{Q_\ell, \, \Z^d \setminus Q_\ell}(u) - I_{Q_ \ell, \, \Z^d \setminus Q_\ell}(w) \le 2 
\sum_{i \in Q_\ell, j \in \Z^d \setminus Q_\ell} J_{i j} \le \frac{c_1}{2} \, \ell^{d - 1} \sum_{m = 1}^{\ell+1} \sigma(m),
\end{align*}
for some~$c_1 > 0$. By this and~\eqref{enesttech2}, we conclude that
\begin{equation} \label{enesttech3}
H_{Q_\ell}(u) \le H_{Q_\ell}(w) + c_1 \ell^{d - 1} \sum_{m = 1}^{\ell+1} \sigma(m).
\end{equation}
On the other hand, using Lemma~\ref{minmaxdecreaselem} and~\eqref{enesttech1}, we see that 
$$ H_{Q_\ell}(v) +H_{Q_\ell}(w)\le H_{Q_\ell}(u)+H_{Q_\ell}(\psi^{(\ell)})\le H_{Q_\ell}(v)+ H_{Q_\ell}(\psi^{(\ell)}), $$
which gives that~$H_{Q_\ell}(w)\le H_{Q_\ell}(\psi^{(\ell)})$. 
This and Lemma~\ref{psienestlem} imply that  
$$
H_{Q_\ell}(w) \le H_{Q_\ell}(\psi^{(\ell)}) \le c_2 \ell^{d - 1} \left( 1 + \sum_{m = 1}^{\ell+1} \sigma(m) \right),
$$
for some~$c_2 > 0$. This and~\eqref{enesttech3} imply estimate~\eqref{enest}.
\end{proof}

\begin{remark} \label{enestrmk}
By inspecting the proofs of Lemma~\ref{psienestlem} and Proposition~\ref{enestprop}, it is clear that when the magnetic field~$h$ vanishes in~$Q_\ell(q)$, the constant~$\bar{C}$ appearing in~\eqref{enest} may be chosen to depend only on the dimension~$d$.
\end{remark}

\subsection{Unconstrained minimizers and ground states} \label{unconstrsub}

In this last subsection, we show that the minimal minimizer is actually a ground state, 
according to Definition~\ref{def:ground}, 
if the oscillation of its transition is chosen sufficiently large. 
This will finish the proof of Theorem~\ref{mainthm} for the case of rational directions and truncated interactions.

From now on, we mostly restrict ourselves to the minimal minimizers that display a transition bounded in the strip~$\S_\omega^{0, M}$, with~$M > 0$ (recall~\eqref{Somegadef}). For this reason, we slightly simplify our notation and denote with~$\F_\omega^M, \A_\omega^M, \S_\omega^M, u_\omega^M, \ldots$ the quantities~$\F_\omega^{0, M}, \A_\omega^{0, M}, \S_\omega^{0, M}, u_\omega^{0, M}, \ldots$

Our main goal is to show that the minimal minimizer~$u_\omega^M$ becomes~\emph{unconstrained}, provided~$M$ is large enough. To do this, we need a few auxiliary results.

First, we present a technical lemma related to the quantity~$\sigma$ introduced in~\eqref{sigmadef}.

\begin{lemma} \label{Sigmagoesto0}
Set
\begin{equation} \label{Sigmadef}
\Sigma(R) := \frac{1}{R} \sum_{m = 1}^R \sigma(m),
\end{equation}
for any~$R \in \N$. Then, it holds
$$
\lim_{R \rightarrow +\infty} \Sigma(R) = 0.
$$
\end{lemma}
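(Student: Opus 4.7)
The plan is to observe that $\Sigma(R)$ is simply the Cesàro average of $\sigma(m)$, so it suffices to prove that $\sigma(R) \to 0$ as $R \to +\infty$ and then conclude by a standard averaging argument. Recall that, by \eqref{sigmadef} and \eqref{Jfinite}, the function $\sigma$ is non-increasing and bounded above by $\Lambda$.

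First I would establish the pointwise decay $\sigma(R) \to 0$. Fix any site $i \in \Z^d$. Since $\sum_{j \in \Z^d} J_{ij} \le \Lambda < +\infty$ by \eqref{Jfinite}, the tails
$$
T_i(R) := \sum_{\substack{j \in \Z^d \\ |j-i|_\infty \ge R}} J_{ij}
$$
satisfy $T_i(R) \to 0$ as $R \to +\infty$ for each fixed $i$. To upgrade this to uniformity in $i$ (needed because $\sigma$ involves a supremum over $i \in \Z^d$), I would invoke the periodicity assumption \eqref{Jper}: the translation invariance $J_{ij} = J_{(i+\tau e)(j+\tau e)}$ for $e \in \Z^d$ ensures that $T_{i+k}(R) = T_i(R)$ for all $k \in \tau\Z^d$. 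Hence the supremum defining $\sigma(R)$ is achieved over the finite set of representatives of a fundamental domain of $\Z^d/\tau\Z^d$, and a finite maximum of sequences tending to zero tends to zero. This gives $\sigma(R) \to 0$.

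Next, given any $\varepsilon > 0$, choose $R_0 \in \N$ such that $\sigma(m) \le \varepsilon/2$ for every $m \ge R_0$. Splitting the sum defining $\Sigma$ into the first $R_0$ terms and the remainder, and using the crude bound $\sigma(m) \le \Lambda$ on the initial block, I obtain for $R > R_0$
$$
\Sigma(R) \le \frac{1}{R} \sum_{m=1}^{R_0} \Lambda + \frac{1}{R} \sum_{m=R_0+1}^{R} \frac{\varepsilon}{2} \le \frac{R_0 \Lambda}{R} + \frac{\varepsilon}{2}.
$$
Taking $R \ge 2 R_0 \Lambda / \varepsilon$ yields $\Sigma(R) \le \varepsilon$, and since $\varepsilon$ is arbitrary this proves $\Sigma(R) \to 0$.

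The only nontrivial point is the uniformity in step one: without the periodicity \eqref{Jper}, the sup in \eqref{sigmadef} could in principle fail to go to zero even when each individual tail $T_i(R)$ does. The periodicity assumption is precisely what collapses the supremum to a finite maximum, and this is the step I would be most careful about. Everything else is a routine Cesàro-type argument.
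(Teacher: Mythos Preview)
Your proof is correct and follows essentially the same approach as the paper: first use \eqref{Jfinite} together with the periodicity \eqref{Jper} to deduce $\sigma(R)\to 0$, then split the sum at some $R_0$ and bound the initial block by $\Lambda R_0/R$. Your treatment is in fact slightly more explicit than the paper's, since you spell out why periodicity is needed to pass from pointwise decay of the tails $T_i(R)$ to decay of the supremum $\sigma(R)$.
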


\begin{proof}
Let~$\varepsilon > 0$ be any small number. In view of~\eqref{Jfinite} and~\eqref{Jper}, we know that
$$
\lim_{R \rightarrow +\infty} \sigma(R) = 0.
$$
Hence, we may select~$R_0 \in \N$ such that, for any~$m \ge R_0$, it holds~$\sigma(m) \le \varepsilon / 2$. 
Using again~\eqref{Jfinite}, we see that~$\sigma(m) \le \Lambda$, for any~$m$. Hence, taking~$R \ge 2 \Lambda R_0 / \varepsilon$, we have
$$
\Sigma(R) = \frac{1}{R} \sum_{m = 1}^{R_0} \sigma(m) + \frac{1}{R} \sum_{m = R_0 + 1}^R \sigma(m) \le \frac{R_0}{R} \Lambda + \frac{R - R_0}{R} \frac{\varepsilon}{2} \le \frac{\varepsilon}{2} + \frac{\varepsilon}{2} = \varepsilon,
$$
and the conclusion follows.
\end{proof}

Then, we have a rigidity result for configurations that satisfy the Birkhoff property and display~\emph{fat} plateaux.

We remark that in the remainder of the subsection we slightly modify the notation fixed in~\eqref{cubeQdef} and denote with~$\C_\ell$ any cube of~$\Z^d$ with sides composed by~$\ell$ sites, i.e.
$$
\C_\ell = \C_\ell(q) := q + \{ 0, 1, \ldots, \ell - 1 \}^d.
$$
Note that now~$q$ denotes the lower vertex, instead of the center. The reference to~$q$ will be however often neglected.

\begin{lemma} \label{fatBirkhofflem}
Let~$u$ be a configuration satisfying the Birkhoff property with respect to~$\omega$, i.e.~for which inequalities~\eqref{Birkhoff} are fulfilled. Assume that there exists a cube~$\C_\tau(q)$ such that
$$
u_i = - 1 \quad \mbox{for any } i \in \C_\tau(q).
$$
Then,
$$
u_i = - 1 \quad \mbox{for any } i \in \Z^d \mbox{ such that } \frac{\omega}{|\omega|} \cdot i \ge \frac{\omega}{|\omega|} \cdot q + \sqrt{d} \tau.
$$
\end{lemma}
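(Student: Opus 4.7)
The plan is to exploit the Birkhoff property by transporting the fixed sign on $\C_\tau(q)$ to any site $j$ lying sufficiently far on the ``$\omega$-positive'' side, using the fact that $\C_\tau(q)$ is a fundamental domain for the sublattice $\tau \Z^d$.

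\medskip

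First I would unpack what the Birkhoff inequalities~\eqref{Birkhoff} say pointwise: taking the second alternative, for any $k \in \tau \Z^d$ with $\omega \cdot k \ge 0$ one has $u_{m - k} \ge u_{m}$ for every $m \in \Z^d$. The goal therefore reduces to this: given $j$ with $\frac{\omega}{|\omega|} \cdot j \ge \frac{\omega}{|\omega|} \cdot q + \sqrt{d}\,\tau$, produce a site $i \in \C_\tau(q)$ (where $u_i = -1$ by hypothesis) such that $k := j - i \in \tau \Z^d$ and $\omega \cdot k \ge 0$. Once these are in place, applying the Birkhoff inequality with $m = j$ gives $-1 = u_i = u_{j - k} \ge u_j$, forcing $u_j = -1$.

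\medskip

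Second, I would observe that $\C_\tau(q) = q + \{0, 1, \dots, \tau - 1\}^d$ is a (complete) set of representatives of $\Z^d / \tau \Z^d$, so there is a unique $i \in \C_\tau(q)$ with $j - i \in \tau \Z^d$. Writing $i = q + r$ with $r \in \{0, \dots, \tau - 1\}^d$, the remaining task is the inequality $\omega \cdot j \ge \omega \cdot i$, or equivalently $\omega \cdot r \le \omega \cdot (j - q)$. By Cauchy--Schwarz and the elementary comparison between the $\ell^1$ and $\ell^2$ norms,
$$
|\omega \cdot r| \le \|\omega\|_2 \, \|r\|_2 \le |\omega| \cdot \sqrt{d}\,(\tau - 1) \le |\omega| \, \sqrt{d}\, \tau,
$$
where I use $\|\omega\|_2 \le \|\omega\|_1 = |\omega|$ and $\|r\|_2 \le \sqrt{d}\,(\tau-1)$. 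Dividing the hypothesis on $j$ through by $|\omega|$ and combining with the above yields
$$
\frac{\omega \cdot r}{|\omega|} \le \sqrt{d}\,\tau \le \frac{\omega \cdot (j - q)}{|\omega|},
$$
which is precisely $\omega \cdot k \ge 0$, completing the argument.

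\medskip

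The only mildly delicate point is the norm bookkeeping in the last step: the notation $|\omega|$ in the statement refers to the $\ell^1$ norm (as fixed at the beginning of Section~1 and used throughout), while the natural Cauchy--Schwarz bound lives in $\ell^2$. The inequality $\|\omega\|_2 \le \|\omega\|_1$ bridges the two and is precisely what makes the constant $\sqrt{d}\,\tau$ appear (rather than a smaller one). Everything else is formal: the Birkhoff property, the fact that $\C_\tau$ tiles $\Z^d$ under $\tau \Z^d$-translations, and the dichotomy $u_j \in \{-1, 1\}$.
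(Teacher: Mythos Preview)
Your argument is correct and complete; the paper itself does not give a proof but simply refers to~\cite[Proposition~3.5]{CdlL05}, and the reasoning you present is precisely the standard one underlying that reference (tile~$\Z^d$ by~$\tau\Z^d$-translates of~$\C_\tau(q)$, pick the representative of~$j$ in~$\C_\tau(q)$, and check the sign of~$\omega\cdot k$ via Cauchy--Schwarz together with~$\|\omega\|_2\le|\omega|$). Your remark on the norm bookkeeping is on point and is exactly why the constant~$\sqrt{d}\,\tau$ appears.
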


\begin{proof}
See~\cite[Proposition~3.5]{CdlL05}.
\end{proof}

With the aid of these lemmata and the energy estimate obtained in Subsection~\ref{sub:energy}, 
we are now able to prove the key result of this subsection.

\begin{proposition} \label{uncprop}
There exist two real numbers~$\mu_0 > 0$, depending only on~$d$,~$\tau$ and~$\lambda$, and~$M_0 > 0$, that may also depend on~$\Lambda$ and the function~$\sigma$, for which
$$
\left( u_\omega^M \right)_i = - 1 \quad \mbox{for any } i \in \Z^d \mbox{ such that } \frac{\omega}{|\omega|} \cdot i \ge M - \sqrt{d} \tau,
$$
provided~$\mu \le \mu_0$ and~$M \ge M_0$.
\end{proposition}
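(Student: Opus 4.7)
The plan is to argue by contradiction, combining the Birkhoff monotonicity of Proposition~\ref{Birkhoffprop}, the plateau-propagation Lemma~\ref{fatBirkhofflem}, the a priori energy estimate in Proposition~\ref{enestprop}, and the decay $\Sigma(R)\to 0$ from Lemma~\ref{Sigmagoesto0}.

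\emph{Reduction.} Proposition~\ref{Birkhoffprop} guarantees that $u_\omega^M$ satisfies the Birkhoff hypothesis of Lemma~\ref{fatBirkhofflem}; hence it is enough to exhibit a cube $\C_\tau(q^\star)\subset\S_\omega^M$ with $\omega/|\omega|\cdot q^\star\le M-2\sqrt d\,\tau$ on which $u_\omega^M\equiv -1$, because Lemma~\ref{fatBirkhofflem} will then propagate this plateau upward and force $(u_\omega^M)_i=-1$ for every $i$ with $\omega/|\omega|\cdot i \ge M-\sqrt d\,\tau$.

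\emph{Contradiction setup.} Assume no such plateau cube exists in the slab $T := \{i\in\Z^d : \omega/|\omega|\cdot i\in[0, M-2\sqrt d\,\tau]\}\cap \S_\omega^M$, so that every $\tau$-cube contained in $T$ carries at least one site where $u_\omega^M=+1$. By Proposition~\ref{Birkhoffprop}, along each one-dimensional column parallel to $\omega$ indexed by a coset of $\tau\Z^d$ the values of $u_\omega^M$ form a non-increasing step ($+1$ below a unique transition height, $-1$ above), and the contradiction assumption pins the transition heights of a $\tau\Z^d$-dense family of columns across $T$ to lie near the top of the strip.

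\emph{Energy comparison.} Pick a cube $Q_\ell\subset\S_\omega^M$ with $\ell$ comparable to $M$. Corollary~\ref{GHcor} makes $u_\omega^M$ a minimizer for $H$ in $Q_\ell$, so Proposition~\ref{enestprop} gives the upper bound
\begin{equation*}
H_{Q_\ell}(u_\omega^M)\le \bar C\,\ell^{d-1}\bigl(1+\ell\,\Sigma(\ell)\bigr),
\end{equation*}
which is $o(\ell^d)$ by Lemma~\ref{Sigmagoesto0}. On the other hand, the contradiction assumption and the structure of Birkhoff-monotone columns, combined with the ferromagnetic bound~\eqref{Jferro} on nearest-neighbor couplings and the fact that $u_\omega^M=-1$ above the strip, should yield a careful interface count producing a lower bound of the form
\begin{equation*}
H_{Q_\ell}(u_\omega^M) \ge \bigl(c_1\lambda\tau^{-d}-c_2\mu\bigr)\,M\,\ell^{d-1},
\end{equation*}
for dimensional constants $c_1,c_2>0$. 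Setting $\mu_0 := c_1\lambda/(2c_2\tau^d)$ keeps the lower bound of order $\lambda M\ell^{d-1}/\tau^d\sim M^d$, which dwarfs the $o(M^d)$ upper bound as soon as $M$ exceeds a threshold $M_0$ depending on $d,\lambda,\Lambda,\tau$ and on how fast $\Sigma$ vanishes at infinity --- a contradiction that closes the argument.

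\emph{Main obstacle.} The delicate step is the lower bound above, specifically the extra multiplicative factor of~$M$: a naive count of the interfaces of $u_\omega^M$ in $Q_\ell$ produces merely a quantity of order $\ell^{d-1}$, which only matches the upper bound and cannot force a contradiction. The required factor of $M$ arises by \emph{stacking} the interface contributions of the $\sim M/\tau$ layers of thickness $\tau$ filling the slab $T$, each layer being populated with $\tau$-cubes carrying a $+1$ forced to generate a local $(+1,-1)$ pair via the column-monotonicity of Proposition~\ref{Birkhoffprop}. Carrying out this bookkeeping uniformly in the slope $\omega\in\R^d\setminus\{0\}$ --- recalling that Birkhoff propagates only along $\tau\Z^d$-shifts, not along all of $\Z^d$ --- is the principal technical hurdle of the proof.
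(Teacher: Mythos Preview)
Your overall strategy---Birkhoff property, plateau propagation via Lemma~\ref{fatBirkhofflem}, and the energy upper bound versus $\Sigma\to0$---matches the paper's. The gap is exactly where you flag it: the lower bound cannot be obtained from the contradiction hypothesis you set up.

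Your contradiction hypothesis is that no $\tau$-cube in the slab $T$ is identically $-1$, i.e.\ every such cube carries at least one $+1$. From this alone you cannot extract $\sim (M/\tau)^d$ interface pairs. Consider the configuration that equals $+1$ on $\{\omega/|\omega|\cdot i\le M-2\sqrt d\,\tau\}$ and $-1$ above: it lies in $\A_\omega^M$, satisfies the Birkhoff inequalities, satisfies your hypothesis (every $\tau$-cube in $T$ is in fact constant $+1$), yet has only $\sim (M/\tau)^{d-1}$ interface pairs---one per column, not one per $\tau$-cube. Your ``stacking'' idea does not manufacture extra interfaces here, because all columns share a single transition near the top of the strip. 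So the claimed lower bound fails for configurations consistent with your hypothesis, and the contradiction does not close.

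The paper avoids this by a two-step argument. First it proves the \emph{weaker} claim that some $\tau$-cube in a large cube $\C_{N\tau}\subset\S_\omega^M$ is \emph{constant} (either sign). The negation of \emph{this} says every $\tau$-cube contains both values, hence an adjacent $(+1,-1)$ pair; this gives $N^d$ interface contributions and the lower bound $H_{\C_{N\tau}}(u)\ge 2\lambda N^d-\mu(N\tau)^d\ge\lambda N^d$ once $\mu\le\mu_0:=\lambda\tau^{-d}$, which beats the upper bound as $\Sigma(N\tau)\to0$. Second, the constant-$+1$ alternative is ruled out by a separate, non-energetic argument: Lemma~\ref{fatBirkhofflem} applied to $-u$ propagates the $+1$-plateau downward past the lower constraint, so a translate $\T_k u$ with $\omega\cdot k<0$ lands in $\A_\omega^M$ with $G_\omega^M(\T_k u)=G_\omega^M(u)$ by~\eqref{hflux0}; since $u$ is the \emph{minimal} minimizer this forces $\T_k u=u$, which is impossible. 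This second step (or an equivalent device exploiting that $u_\omega^M$ is the minimal minimizer, not just a Birkhoff configuration) is what your proposal is missing.
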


\begin{proof}
For shortness, we write~$u = u_\omega^M$. In view of Lemma~\ref{fatBirkhofflem} and Proposition~\ref{Birkhoffprop}, it suffices to show that
\begin{equation} \label{uncclaim1}
u_i = - 1 \quad \mbox{for any } i \in \C_\tau(q), \mbox{ with } q \in \Z^d \mbox{ satisfying } \frac{\omega}{|\omega|} \cdot q \le M - 2 \sqrt{d} \tau.
\end{equation}

In order to check the validity of claim~\eqref{uncclaim1}, we first prove a weaker fact. Take
\begin{equation} \label{mu0def}
\mu \le \mu_0 := \lambda \tau^{-d},
\end{equation}
where~$\lambda$ is given in~\eqref{Jferro}. Consider the strip
$$
\widehat{\S}_\omega^M := \S_\omega^{\frac{M}{8}, \frac{7 M}{8}} = \left\{ i \in \Z^d : \frac{\omega}{|\omega|} \cdot i \in \left[ \frac{M}{8}, \frac{7 M}{8} \right] \right\} \subset \S_\omega^M,
$$
and a cube~$\C_{N \tau} \subset \widehat{\S}_\omega^M$ of sides~$N \tau$, with~$N \in \N$. It is not hard to see that~$N$ can be taken in such a way that
$$
\frac{M}{2} \le N \tau \le \frac{3 M}{4}.
$$
Divide the cube~$\C_{N \tau}$ in a partition~$\{ \C_\tau^{(n)} \}_{n = 1}^{N^d}$ of~$N^d$ non-overlapping, smaller cubes of sides~$\tau$. We claim that
\begin{equation} \label{uncclaim2}
\begin{gathered}
\mbox{there exists an index } \bar{n} \in \left\{ 1, \ldots, N^d \right\} \mbox{ for which} \\
\mbox{either } u_i = - 1 \mbox{ for any } i \in \C_\tau^{(\bar{n})} \mbox{ or } u_i = 1 \mbox{ for any } i \in \C_\tau^{(\bar{n})}.
\end{gathered}
\end{equation}

To prove~\eqref{uncclaim2} we argue by contradiction and suppose that, for any~$n = 1, \ldots, N^d$, 
we can find two sites~$i^{(n)}, j^{(n)} \in \C_\tau^{(n)}$ at which~$u_{i^{(n)}} = -1$ and~$u_{j^{(n)}} = 1$. 
Observe that we can take~$i^{(n)}$ and~$j^{(n)}$ to be adjacent, i.e.~such that~$|i^{(n)} - j^{(n)}| = 1$. 
Using~\eqref{Jferro},~\eqref{hsup} and~\eqref{mu0def}, we compute
\begin{equation} \label{uncenestbelow}
\begin{split}
& H_{\C_{N \tau}}(u) \ge I_{\C_{N \tau}, \, \C_{N \tau}}(u) + B_{\, \C_{N \tau}}(u) \ge \sum_{n = 1}^{N^d} \sum_{i, j \in \C_\tau^{(n)}} J_{i j} (1 - u_i u_j) 
+ \sum_{i \in \C_{N \tau}} h_i u_i \\
&\qquad\qquad  \ge \sum_{n = 1}^{N^d} J_{i^{(n)} j^{(n)}} \left( 1 - u_{i^{(n)}} u_{j^{(n)}} \right) - \sum_{i \in \C_{N \tau}} |h_i| \ge 2 \lambda N^d - \mu \left( N \tau \right)^d 
\ge \lambda N^d.
\end{split}
\end{equation}
On the other hand, the energy estimate established in Proposition~\ref{enestprop} (recall that~$u$ is a minimizer for~$H$ in~$\C_{N \tau}$, thanks to Corollary~\ref{GHcor}) gives that
$$
H_{\C_{N \tau}}(u) \le c_1 \left ( \frac{N \tau - 1}{2} \right)^{d - 1} \sum_{m = 1}^{\left\lfloor \frac{N \tau - 1}{2} \right\rfloor + 1} \sigma(m) \le c_2 N^{d - 1} \tau^{d - 1} \sum_{m = 1}^{N \tau} \sigma(m),
$$
for some constants~$c_1, c_2 > 0$. By comparing this with~\eqref{uncenestbelow} and recalling definition~\eqref{Sigmadef}, we find that
$$
\Sigma(N \tau) \ge \frac{\lambda}{c_2 \tau^d},
$$
which clearly contradicts Lemma~\ref{Sigmagoesto0}, if~$N$ (and hence~$M$) is chosen sufficiently large. Therefore, claim~\eqref{uncclaim2} is true, provided we take~$M \ge M_0$, with~$M_0$ only depending on~$d$,~$\tau$,~$\lambda$,~$\Lambda$ and the function~$\sigma$.

Denote by~$\bar{q}$ the lower vertex of the cube~$\C_\tau^{(\bar{n})}$, so that~$\C_\tau^{(\bar{n})} = \C_\tau(\bar{q})$. As~$\C_\tau(\bar{q}) \subset \C_{N \tau} \subset \widehat{\S}_\omega^M$, we have that~$\omega \cdot \bar{q} \le 7 M |\omega| / 8 \le (M - 2 \sqrt{d} \tau) |\omega|$, by possibly enlarging~$M_0$. Hence,~\eqref{uncclaim1} follows from~\eqref{uncclaim2}, once we rule out the possibility that
\begin{equation} \label{uncclaim3}
u_i = 1 \quad \mbox{for any } i \in \C_\tau(\bar{q}).
\end{equation}

Assume by contradiction that~\eqref{uncclaim3} holds true. 
By applying Lemma~\ref{fatBirkhofflem} (to~$-u$ instead of~$u$, which has the Birkhoff property with respect to~$-\omega$), we deduce that
$$
u_i = 1 \quad \mbox{for any } i \in \Z^d \mbox{ such that } \frac{\omega}{|\omega|} \cdot i \le \frac{\omega}{|\omega|} \cdot \bar{q} - \sqrt{d} \tau.
$$
Again, by possibly taking a larger~$M_0$, we see that the above fact is valid in particular for any site~$i$ satisfying~$\omega \cdot i < \tau |\omega|$. 
Supposing with no loss of generality that~$\omega_1 > 0$ (as one can relabel the axes and invert their orientation) 
and setting~$k = (-\tau, 0, \ldots, 0) \in \tau \Z^d$, we have that~$\omega \cdot k < 0$ and, for the observation made just before,~$\T_k u \in \A_\omega^M$. 
On the one hand, Proposition~\ref{Birkhoffprop} implies that~$\T_k u \le u$. On the other hand, using~\eqref{hflux0} one can check that~$G_\omega^M(\T_k u) = G_\omega^M(u)$. Consequently,~$\T_k u \in \M_\omega^M$ and~$\T_k u \ge u$, by the fact that~$u$ is the minimal minimizer. By putting together these two inequalities, we end up with the identity~$\T_k u = u$, which clearly cannot occur.

As a result,~\eqref{uncclaim3} is false and claim~\eqref{uncclaim1} plainly follows. The proof of the proposition is therefore complete.
\end{proof}

\begin{corollary} \label{unccor}
Let~$\mu_0$ and~$M_0$ be as in Proposition~\ref{uncprop}. If~$\mu \le \mu_0$, then~$u_\omega^{M_0} = u_\omega^{M_0 + a}$ for any~$a \in \tau \N$.
\end{corollary}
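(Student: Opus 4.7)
The plan is to establish the one-step identity $u_\omega^M = u_\omega^{M+\tau}$ for every $M \ge M_0$, and then to iterate: this immediately yields $u_\omega^{M_0} = u_\omega^{M_0 + \tau} = u_\omega^{M_0 + 2 \tau} = \ldots = u_\omega^{M_0 + a}$ for every $a \in \tau \N$.

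The first step I would take is to verify a cross-admissibility, namely that $u_\omega^{M+\tau} \in \A_\omega^M$ and $u_\omega^M \in \A_\omega^{M+\tau}$. The second inclusion is immediate from Proposition~\ref{uncprop} applied to $M$, which gives $u_\omega^M = -1$ on $\F_\omega^{M - \sqrt{d} \tau, +} \supset \F_\omega^{M + \tau, +}$. For the first inclusion, I would apply Proposition~\ref{uncprop} instead with the parameter $M + \tau$: this yields $u_\omega^{M+\tau} = -1$ at every site $i$ with $\frac{\omega}{|\omega|} \cdot i \ge M + \tau - \sqrt{d} \tau = M - (\sqrt{d} - 1) \tau$. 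Because $\sqrt{d} \ge 1$, the latter threshold is at most $M$, so $u_\omega^{M+\tau} = -1$ on $\F_\omega^{M, +}$, and together with the boundary value $u_\omega^{M+\tau} = 1$ on $\F_\omega^{0, -}$ inherited from $\A_\omega^{M+\tau}$, this places $u_\omega^{M+\tau}$ in $\A_\omega^M$.

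Next I would compare the two energies by observing that $u_\omega^M$ and $u_\omega^{M+\tau}$ are both identically $-1$ on the slab $\F_\omega^{M, M+\tau}$. This means the magnetic-term correction passing from $G_\omega^M$ to $G_\omega^{M+\tau}$ is the same constant $-\sum_{i \in \F_\omega^{M, M+\tau}} h_i$ for either configuration, so that
\[
G_\omega^{M+\tau}(u_\omega^M) - G_\omega^{M+\tau}(u_\omega^{M+\tau}) \;=\; G_\omega^M(u_\omega^M) - G_\omega^M(u_\omega^{M+\tau}).
\]
The left-hand side is $\ge 0$ by the minimality of $u_\omega^{M+\tau}$ in $\A_\omega^{M+\tau}$ (applied to the competitor $u_\omega^M$), and the right-hand side is $\le 0$ by the minimality of $u_\omega^M$ in $\A_\omega^M$ (applied to the competitor $u_\omega^{M+\tau}$); hence both vanish. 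In particular, $u_\omega^M \in \M_\omega^{M+\tau}$ and $u_\omega^{M+\tau} \in \M_\omega^M$, and the very definition of the minimal minimizer as the pointwise minimum of the corresponding class of minimizers forces $u_\omega^M \le u_\omega^{M+\tau} \le u_\omega^M$, concluding the one-step identity.

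The only point that is genuinely delicate is the cross-admissibility in the first step. For $d \ge 2$, Proposition~\ref{uncprop} does not \emph{directly} guarantee that the transition of $u_\omega^{M+\tau}$ fits within a single-period slab, since $\sqrt{d} \tau > \tau$. The saving grace is the slightly weaker bound $M + \tau - \sqrt{d} \tau \le M$, which is still just enough to pull $u_\omega^{M+\tau}$ back into the smaller class $\A_\omega^M$. Once this admissibility is secured, the remainder is a purely algebraic manipulation exploiting the fact that both candidates agree on $\F_\omega^{M, M+\tau}$, so the entire magnetic contribution on this new slab cancels out of the comparison.
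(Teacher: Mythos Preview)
Your proof is correct and follows essentially the same route as the paper: iterate the one-step identity, establish cross-admissibility via Proposition~\ref{uncprop}, and exploit minimality in both classes to conclude that each configuration is a minimal minimizer in both $\M_\omega^M$ and $\M_\omega^{M+\tau}$. The only minor difference is that the paper invokes hypothesis~\eqref{hflux0} to show the magnetic correction over the slab $\F_\omega^{M+\tau}\setminus\F_\omega^{M}$ actually vanishes, whereas you simply observe that this correction is the same constant for both configurations (since both equal $-1$ there) and hence cancels from the comparison---a slight streamlining that sidesteps~\eqref{hflux0} at this particular step.
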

\begin{proof}
Consider any~$M = M_0 + n \tau$, with~$n \in \N \cup \{ 0 \}$. The claim of the corollary is then equivalent to show that
\begin{equation} \label{uncthesis}
u_\omega^M = u_\omega^{M + \tau}.
\end{equation}

To see that~\eqref{uncthesis} holds true, first notice that~$u_\omega^M \in \A_\omega^{M + \tau}$. Also,
\begin{equation}\label{oewghhggb}
u_\omega^{M + \tau} \in \A_\omega^M, \end{equation}
as one can easily check by applying Proposition~\ref{uncprop} to~$u_\omega^{M + \tau}$. Hence,
\begin{equation} \label{unctech}
G_\omega^{M + \tau}(u_\omega^{M + \tau}) \le G_\omega^{M + \tau}(u_\omega^M) \quad \mbox{and} \quad G_\omega^M(u_\omega^M) \le G_\omega^M(u_\omega^{M + \tau}).
\end{equation}
But then, for any~$w \in \A_\omega^M$ it holds
\begin{equation}\label{rtfgcvs}
G_\omega^{M + \tau}(w) - G_\omega^M(w)  = B_{ \F_\omega^{M + \tau} \setminus \F_\omega^M}(w) = 
- \sum_{i \in \F_\omega^{M + \tau} \setminus \F_\omega^M} h_i = 0,
\end{equation}
where the last identity is true by virtue of hypothesis~\eqref{hflux0}, since~$\F_\omega^{M + \tau} \setminus \F_\omega^M$ 
may be written as a disjoint union of fundamental domains of~$\Z^d / \tau \Z^d$. 

In particular, \eqref{oewghhggb} and~\eqref{rtfgcvs} give that 
$$ G_\omega^{M + \tau} (u_\omega^{M + \tau}) =G_\omega^M (u_\omega^{M + \tau}).$$
Using this and
the two inequalities in~\eqref{unctech}, we obtain that 
\begin{equation*}
G_\omega^{M} (u_\omega^{M+\tau}) = G_\omega^{M+\tau}(u_\omega^{M+\tau})\le G_\omega^{M+\tau}(u_\omega^{M}) 
= G_\omega^{M}(u_\omega^{M})\le G_\omega^{M}(u_\omega^{M+\tau}).  
\end{equation*}
Hence,~$u_\omega^M$ and~$u_\omega^{M + \tau}$ belong to~$\M_\omega^M \cap \M_\omega^{M + \tau}$ 
and~\eqref{uncthesis} follows by the fact that they are both minimal minimizers.
\end{proof}

When used in combination with Corollary~\ref{GHcor}, the previous result ensures in particular 
that the energy~$H$ of~$u_\omega^{M_0}$ is lower than that of any perturbation involving a finite 
number of sites that lie over the module~$\{ \omega \cdot i = 0 \}$. 
For this reason, the minimal minimizer~$u_\omega^{M_0}$ does not feel the upper 
constraint~$\{ \omega \cdot i = M_0 |\omega| \}$ and extends its minimizing properties well beyond it.

In the next result, we show that the same happens for the lower constraint and that the minimal minimizer is therefore fully unconstrained.

\begin{proposition} \label{fullyuncprop}
Let~$\mu_0$ and~$M_0$ be as in Proposition~\ref{uncprop}. If~$\mu \le \mu_0$, then~$u_\omega^{M_0} \in \M_{m, \omega}^{- a, M_0 + a}$ for any~$m \in \N$ and any~$a \in \tau \N$.
\end{proposition}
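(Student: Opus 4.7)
My plan is to prove Proposition~\ref{fullyuncprop} by combining Corollary~\ref{unccor} (upward strip extension), the Birkhoff property (Proposition~\ref{Birkhoffprop}), and the doubling property (Proposition~\ref{umomega=uomega}) via a min-max comparison. The overarching strategy is to show that $u_\omega^{M_0}$ coincides with the minimal minimizer $u_\omega^{-a, M_0+a}$ of the enlarged strip, for then Proposition~\ref{umomega=uomega} immediately yields $u_\omega^{M_0}=u_{m,\omega}^{-a, M_0+a}\in\M_{m,\omega}^{-a, M_0+a}$ for every $m\in\N$. I would begin by verifying admissibility: $u_\omega^{M_0}$ is $(1,\omega)$-periodic (hence $(m,\omega)$-periodic), equals $1$ for $\omega/|\omega|\cdot i<0$ (and thus for $<-a$), and by Proposition~\ref{uncprop} equals $-1$ whenever $\omega/|\omega|\cdot i\geq M_0-\sqrt{d}\tau$ (and thus for $>M_0+a$), so $u_\omega^{M_0}\in\A_\omega^{-a, M_0+a}$.

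For the minimality, I would pair $u_\omega^{M_0}$ with any $v\in\A_\omega^{-a, M_0+a}$ through $m_v:=\min\{v,u_\omega^{M_0}\}$ and $M_v:=\max\{v,u_\omega^{M_0}\}$. Using once more the plateau properties of $u_\omega^{M_0}$, a direct check gives $M_v\in\A_\omega^{0, M_0+a}$ and $m_v\in\A_\omega^{-a, M_0}$. By Lemma~\ref{minmaxdecreaselem} applied to the interaction term, together with the pointwise identity $B_\Gamma(m_v)+B_\Gamma(M_v)=B_\Gamma(u_\omega^{M_0})+B_\Gamma(v)$ valid on any $\Gamma$, one obtains
\begin{equation*}
G_\omega^{-a, M_0+a}(m_v)+G_\omega^{-a, M_0+a}(M_v)\leq G_\omega^{-a, M_0+a}(u_\omega^{M_0})+G_\omega^{-a, M_0+a}(v).
\end{equation*}
Since Corollary~\ref{unccor} gives $u_\omega^{M_0}\in\M_\omega^{0, M_0+a}$, and since $M_v=u_\omega^{M_0}=1$ on $\F_\omega^{-a, 0}$ so that hypothesis~\eqref{hflux0} makes the magnetic contribution on this slab vanish (via the same decomposition into fundamental domains of $\Z^d/\tau\Z^d$ as in the proof of Corollary~\ref{unccor}), I would deduce $G_\omega^{-a, M_0+a}(M_v)\geq G_\omega^{-a, M_0+a}(u_\omega^{M_0})$. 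Substituting this back reduces the problem to the symmetric lower-side inequality $G_\omega^{-a, M_0+a}(m_v)\geq G_\omega^{-a, M_0+a}(u_\omega^{M_0})$.

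The main obstacle is exactly this lower-side bound: using that $m_v=u_\omega^{M_0}=-1$ on $\F_\omega^{M_0, M_0+a}$ together with \eqref{hflux0}, it reduces to $u_\omega^{M_0}\in\M_\omega^{-a, M_0}$, the \emph{lower-side analog} of Corollary~\ref{unccor}. The proof of Proposition~\ref{uncprop} cannot be transcribed symmetrically, because the minimal-minimizer construction is structurally biased toward $-1$, so a $+1$ fat plateau near the bottom of the strip does not follow from the same Birkhoff-plus-translation contradiction. My plan to bypass this obstruction is to exploit the translation identity buried in the proof of Proposition~\ref{Birkhoffprop}: for $k\in\tau\Z^d$ with $\omega\cdot k\leq 0$ one has $\T_k u_\omega^{0, M_0}=u_\omega^{\delta, M_0+\delta}$ where $\delta=\omega\cdot k/|\omega|$, and an iteration of Corollary~\ref{unccor} upward on the shifted strip identifies this translate with the minimal minimizer of a doubly extended strip containing $[-a, M_0+a]$. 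The assumptions \eqref{Jper}, \eqref{hper} and~\eqref{hflux0} then let me equate $G_\omega^{-a, M_0+a}(u_\omega^{M_0})$ with the energy of this translated minimizer, which forces $u_\omega^{M_0}$ itself to achieve the minimum in $\A_\omega^{-a, M_0+a}$; a final invocation of Proposition~\ref{umomega=uomega} transfers the conclusion to $\M_{m,\omega}^{-a, M_0+a}$ for every $m\in\N$.
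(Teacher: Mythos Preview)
Your approach is correct, but it takes a longer road than the paper does, and the first half of your argument turns out to be superfluous.

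The paper's proof bypasses the min--max decomposition entirely. It works directly at level $m$ and translates the \emph{competitor} rather than the minimal minimizer: given $v\in\A_{m,\omega}^{-a,M_0+a}$, one chooses $k\in\tau\Z^d$ with $\omega\cdot k/|\omega|\ge a$ and $\omega\cdot k/|\omega|\in\tau\N$ (for instance $k=\tau m\,(\mathrm{sgn}\,\omega_1,\ldots,\mathrm{sgn}\,\omega_d)$ with $m\ge a/\tau$), so that $\T_k v\in\A_{m,\omega}^{0,M_0+b}$ for some $b\in\tau\N$. Corollary~\ref{unccor} together with Proposition~\ref{umomega=uomega} give $u_\omega^{M_0}\in\M_{m,\omega}^{M_0+b}$, whence $G_{m,\omega}^{M_0+b}(u_\omega^{M_0})\le G_{m,\omega}^{M_0+b}(\T_k v)$. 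Translation invariance of $J,h$ and the zero-flux property~\eqref{hflux0} on the aligned slabs then convert both sides into the corresponding $G_{m,\omega}^{-a,M_0+a}$ values, and the inequality $G_{m,\omega}^{-a,M_0+a}(u_\omega^{M_0})\le G_{m,\omega}^{-a,M_0+a}(v)$ drops out in one line.

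Your route reaches the same destination via the dual move: you translate $u_\omega^{M_0}$ \emph{downward} using the identity $\T_k u_\omega^{0,M_0}=u_\omega^{\delta,M_0+\delta}$ from the Birkhoff proof, then invoke a shifted version of Corollary~\ref{unccor} to extend the strip upward. This is valid (the shifted Corollary follows immediately from $\tau$-periodicity of the data), but note that once your final paragraph establishes $u_\omega^{M_0}\in\M_\omega^{-a,M_0+a}$ directly, the entire min--max reduction of your second paragraph and the intermediate goal $u_\omega^{M_0}\in\M_\omega^{-a,M_0}$ become redundant: you never actually use them. What your detour buys is nothing; what the paper's version buys is a two-line proof. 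One small point worth making explicit in your write-up: the translation vector $k$ must be chosen so that $\delta=\omega\cdot k/|\omega|\in\tau\Z$, in order that the slab differences $[\delta,-a)$ and $(M_0+a,B']$ decompose into fundamental domains of $\Z^d/\tau\Z^d$ and the zero-flux argument applies; the paper is careful about this, and the choice $k=\pm\tau m\,(\mathrm{sgn}\,\omega_1,\ldots,\mathrm{sgn}\,\omega_d)$ does the job.
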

\begin{proof}
First, we note that, by arguing as in the proof of Corollary~\ref{unccor}, one may check that, 
given any four real numbers~$A < B$ and~$A' < B'$ such that~$A - A'$, $B - B' \in \tau \Z$, it holds
$$
G_{m, \omega}^{A, B}(w) = G_{m, \omega}^{A', B'}(w),
$$
for any~$w \in \A_{m, \omega}^{A, B} \cap \A_{m, \omega}^{A', B'}$.

Take now any configuration~$v \in \A_{m, \omega}^{-a, M_0 + a}$ and let~$k \in \tau \Z^d$ 
be a vector satisfying~$\omega \cdot k \ge a |\omega|$ and~$\omega \cdot k \in \tau |\omega| \N$. 
We have that
$$ \T_k v \in \A_{m, \omega}^{- a + \omega \cdot k / |\omega|, M_0 + a + \omega \cdot k / |\omega|} \subseteq \A_{m, \omega}^{M_0 + b}$$
for some~$b \in \tau \N$ with~$b \ge a + \omega \cdot k / |\omega|$. By Corollary~\ref{unccor} and Proposition~\ref{umomega=uomega}, we know that~$u_\omega^{M_0} \in \M_{m, \omega}^{M_0 + b}$ and thus~$G_{m, \omega}^{M_0 + b}(u_\omega^{M_0}) \le G_{m, \omega}^{M_0 + b}(\T_k v)$. But
$$
G_{m, \omega}^{M_0 + b}(\T_k v) = G_{m, \omega}^{- \frac{\omega}{|\omega|} \cdot k, \, M_0 + b - \frac{\omega}{|\omega|} \cdot k}(v) = G_{m, \omega}^{-a, M_0 + a}(v),
$$
and
$$
G_{m, \omega}^{M_0 + b}(u_\omega^{M_0}) = G_{m, \omega}^{M_0}(u_\omega^{M_0}) = G_{m, \omega}^{-a, M_0 + a}(u_\omega^{M_0}),
$$
thanks to the opening remark. Consequently,~$G_{m, \omega}^{-a, M_0 + a}(u_\omega^{M_0}) \le G_{m, \omega}^{-a, M_0 + a}(v)$ and the proposition is proved.
\end{proof}

A simple consequence of this fact is that the minimal minimizer is indeed a ground state. A rigorous proof of this fact is contained in the following

\begin{corollary}
Let~$\mu_0$ and~$M_0$ be as in Proposition~\ref{uncprop}. If~$\mu \le \mu_0$, then~$u_\omega^{M_0}$ is a ground state for~$H$.
\end{corollary}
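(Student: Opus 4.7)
The plan is to reduce the statement to the tools already available, namely Proposition~\ref{fullyuncprop} together with Proposition~\ref{GminisHmin} and Remark~\ref{minincrmk}. Per Definition~\ref{def:ground}, I need to show that for every finite set~$\Gamma \subset \Z^d$, the configuration~$u_\omega^{M_0}$ is a minimizer for~$H$ in~$\Gamma$.

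First, given a finite~$\Gamma \subset \Z^d$, I choose~$a \in \tau\N$ large enough so that
$$
\Gamma \subset \S_\omega^{-a, M_0 + a} = \left\{ i \in \Z^d : \frac{\omega}{|\omega|} \cdot i \in [-a, M_0 + a] \right\}.
$$
This is possible because~$\Gamma$ is finite, hence bounded in the~$\omega$-direction. Next, since~$\Gamma$ is bounded in~$\Z^d$ and~$\F_{m,\omega}^{-a, M_0+a}$ exhausts~$\S_\omega^{-a, M_0+a}$ as~$m \to \infty$ (any fundamental domain of~$\Z^d/\LL_{m,\omega}$ can be taken large enough), I can pick~$m \in \N$ sufficiently large so that~$\Gamma \subset \F_{m,\omega}^{-a, M_0+a}$.

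By Proposition~\ref{fullyuncprop}, we have~$u_\omega^{M_0} \in \M_{m,\omega}^{-a, M_0+a}$. Applying Proposition~\ref{GminisHmin} (with the strip endpoints~$A = -a$ and~$B = M_0 + a$), I conclude that~$u_\omega^{M_0}$ is a minimizer for~$H$ in the larger set~$\F_{m,\omega}^{-a, M_0+a}$. Since~$\Gamma \subseteq \F_{m,\omega}^{-a, M_0+a}$, Remark~\ref{minincrmk} immediately upgrades this to minimality of~$u_\omega^{M_0}$ for~$H$ in~$\Gamma$. As~$\Gamma$ was an arbitrary finite subset of~$\Z^d$, this exactly says that~$u_\omega^{M_0}$ is a ground state.

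There is essentially no obstacle here: the heavy lifting has already been done in establishing Proposition~\ref{fullyuncprop}, which removed both constraints of the restricted minimization problem simultaneously for arbitrarily large slabs and arbitrary periodicity multiplicity. The only thing to verify is the routine geometric fact that any finite~$\Gamma$ fits inside some~$\F_{m,\omega}^{-a, M_0+a}$ for suitable~$a$ and~$m$, which is clear from the unboundedness of these fundamental domains in the directions orthogonal to~$\omega$ (as~$m$ grows) and from the boundedness of~$\Gamma$ in the~$\omega$-direction (which fixes the needed~$a$).
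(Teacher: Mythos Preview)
Your proof is correct and follows essentially the same approach as the paper's own proof: choose~$m$ and~$a$ large enough so that~$\Gamma \subseteq \F_{m,\omega}^{-a, M_0+a}$, invoke Proposition~\ref{fullyuncprop} to get~$u_\omega^{M_0} \in \M_{m,\omega}^{-a, M_0+a}$, then apply Proposition~\ref{GminisHmin} and Remark~\ref{minincrmk}. The paper's version is simply more terse, but the logical structure is identical.
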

\begin{proof}
The proof is analogous to that of Corollary~\ref{GHcor}.

Given a finite set~$\Gamma \subset \Z^d$, we take~$m \in \N$ and~$a \in \tau \N$ sufficiently large to have~$\Gamma \subseteq \F_{m, \omega}^{-a, M_0 + a}$. By Proposition~\ref{fullyuncprop}, the minimal minimizer~$u_\omega^{M_0}$ belongs to the class~$\M_{m, \omega}^{-a, M_0 + a}$ and then, by Proposition~\ref{GminisHmin}, it is a minimizer for~$H$ in~$\F_{m, \omega}^{-a, M_0 + a}$. The conclusion follows by recalling Remark~\ref{minincrmk} and since~$\Gamma$ can be chosen arbitrarily.
\end{proof}

We point out that, in light of this last result, the proof of Theorem~\ref{mainthm} is concluded, at least for rational directions~$\omega \in \Q^d \setminus \{ 0 \}$ and under the finite-range hypothesis~\eqref{Jtruncated} on~$J$.

In the next two subsections, we show that assumption~\eqref{Jtruncated} might in fact be removed and that irrational directions can be dealt with an approximation procedure. After this, Theorem~\ref{mainthm} will be proved in its full generality.

\subsection{Ground states for infinite-range interactions} \label{infrangesub}

Here we address the proof of Theorem~\ref{mainthm} for models allowing infinite-range interactions. 
That is, we show that planelike ground states exist for Hamiltonians~$H$ whose interaction coefficients~$J$ satisfy the summability condition~\eqref{Jfinite}, but not necessarily~\eqref{Jtruncated}.

Let~$J: \Z^d \times \Z^d \to [0, +\infty)$ be any function satisfying assumptions~\eqref{Jsymm},~\eqref{Jferro},~\eqref{Jfinite} and~\eqref{Jper}. Let~$\{ R_n \}_{n \in \N}$ be an increasing sequence of positive real numbers, diverging to~$+\infty$. For any~$n \in \N$, we define a function~$J^{(n)}: \Z^d \times \Z^d \to [0, +\infty)$ by setting
$$
J^{(n)}_{i j} := \begin{cases}
J_{i j} & \quad \mbox{if } |i - j| \le R_n, \\
0       & \quad \mbox{if } |i - j| > R_n,
\end{cases}
$$
and the associated Hamiltonian~$H^{(n)}$, on any finite set~$\Gamma \subset \Z^d$ and any configuration~$u$, as
$$
H^{(n)}_\Gamma(u) := \sum_{(i, j) \in \Z^{2 d} \setminus (\Z^d \setminus \Gamma)^2} J^{(n)}_{i j} (1 - u_i u_j) + \sum_{i \in \Gamma} h_i u_i.
$$
Observe that~$J^{(n)}$ still satisfies~\eqref{Jsymm},~\eqref{Jferro},~\eqref{Jfinite} and~\eqref{Jper}. Moreover,~$J^{(n)}$ fulfills condition~\eqref{Jtruncated}, with~$R = R_n$.

Let now~$\omega \in \Q^d \setminus \{ 0 \}$ be a fixed direction. By the work done in the previous subsections, 
for any~$n \in \N$ we can find a ground state~$u^{(n)}$ for~$H^{(n)}$ with interface~$\partial u^{(n)}$ satisfying
\begin{equation} \label{unplanelike}
\partial u^{(n)} \subset \left\{ i \in \Z^d : \frac{\omega}{|\omega|} \cdot i \in [0, M] \right\},
\end{equation}
for some~$M > 0$ independent of~$n$. We stress that the uniformity of~$M$ in~$n$ 
is crucial for the following arguments, and is a consequence of the fact that the 
constant~$M_0$ found in Proposition~\ref{uncprop} is independent of the range of positivity~$R$ of~\eqref{Jtruncated}.

By Tychonoff's Theorem, we can find a subsequence of the~$u^{(n)}$'s, that we still label in the same way, 
that converges to a new configuration~$u$. As a matter of fact, for any finite set~$\Gamma \subset \Z^d$, there exists a number~$N\in \N$ such that
\begin{equation} \label{undefequalu}
u^{(n)}_i = u_i \quad \mbox{for any } i \in \Gamma \mbox{ and any } n \ge N.
\end{equation}

We claim that~$u$ is a planelike ground state for~$H$. Obviously,~\eqref{unplanelike} passes to the limit and the same estimate holds true for the interface of~$u$. Therefore, we are only left to verify that~$u$ is a ground state for~$H$.

To see this, let~$\Gamma \subset \Z^d$ be a finite set and~$v$ be a configuration for which~$v_i = u_i$ at any site~$i \in \Z^d \setminus \Gamma$. Write~$v = u + \varphi$, with~$\varphi: \Z^d \to \{ -2, 0, 2 \}$ and set~$v^{(n)} := u^{(n)} + \varphi$. From now on, we always assume~$n$ to be larger than the number~$N$ for which~\eqref{undefequalu} is valid. By~\eqref{undefequalu} and the fact that~$\varphi_i = 0$ for any~$i \in \Z^d \setminus \Gamma$, we see that~$v^{(n)}$ attains only the values~$-1$ and~$1$, at least for a large enough~$n$. That is,~$v^{(n)}$ is an admissible configuration and~$v^{(n)}_i = u^{(n)}_i$ for any~$i \in \Z^d \setminus \Gamma$. As~$u^{(n)}$ is a minimizer for~$H^{(n)}$ in~$\Gamma$, we have that
\begin{equation} \label{unmin}
H^{(n)}_\Gamma(u^{(n)}) \le H^{(n)}_\Gamma(v^{(n)}).
\end{equation}

To finish the proof, we must show that~\eqref{unmin} yields an analogous inequality for~$u$,~$v$ and~$H$. 
For this, we first recall that~$u^{(n)}_i = u_i$ and, thus,~$v^{(n)}_i = v_i$ at any site~$i \in \Gamma$. Moreover, up to taking a larger~$N$, we have that~$J^{(n)}_{i j} = J_{i j}$ for any~$i, j \in \Gamma$, as~$\Gamma$ is finite. Accordingly,
\begin{align*}
\left| H_\Gamma(u) - H_\Gamma^{(n)}(u^{(n)}) \right| & = 2 \left| \sum_{i \in \Gamma, j \in \Z^d \setminus \Gamma} \left[ J_{i j} (1 - u_i u_j) - J_{i j}^{(n)} (1 - u_i u^{(n)}_j ) \right] \right| \\
& \le 2 \sum_{i \in \Gamma} \left( \sum_{j \in \Z^d \setminus \Gamma} J_{i j} |u_j - u_j^{(n)}| + 2 \sum_{j \in \Z^d \setminus \Gamma} \left| J_{i j} - J^{(n)}_{i j} \right| \right).
\end{align*}
But then, since~$J^{(n)}_{i j} \le J_{i j}$,~$J^{(n)}_{i j} \rightarrow J_{i j}$ and~$u^{(n)}_i \rightarrow u_i$, for any~$i, j \in \Z^d$, we are in position to apply the Dominated Convergence Theorem for Series and conclude that the right-hand side of the above inequality goes to~$0$ as~$n \rightarrow +\infty$. Note that the summability hypothesis~\eqref{Jfinite} and the finiteness of~$\Gamma$ are crucial for this argument to work. As the same reasoning can be made for the~$v^{(n)}$'s, we obtain that
$$
\lim_{n \rightarrow +\infty} H_\Gamma^{(n)}(u^{(n)}) = H_\Gamma(u) \quad \mbox{and} \quad \lim_{n \rightarrow +\infty} H_\Gamma^{(n)}(v^{(n)}) = H_\Gamma(v).
$$
By this and~\eqref{unmin}, we conclude that~$u$ is a minimizer for~$H$ in~$\Gamma$ and, hence, a ground state.

\subsection{Irrational directions} \label{omegairrsub}

In this subsection, we complete the proof of Theorem~\ref{mainthm} by showing that there exist planelike minimizers also in correspondence to irrational directions.

For a fixed irrational direction~$\omega \in \R^d \setminus \Q^d$, we take a sequence~$\{ \omega_n \}_{n \in \N} \subset \Q^d \setminus \{ 0 \}$ converging to~$\omega$. Associated to each~$\omega_n$, we consider the ground state~$u^{(n)}$ for~$H$ constructed previously. We have
\begin{equation} \label{unplanelike2}
\partial u^{(n)} \subset \left\{ i \in \Z^d : \frac{\omega_n}{|\omega_n|} \cdot i \in [0, M] \right\},
\end{equation}
for some constant~$M > 0$ independent of~$n$.

The proof continues as in the preceding subsection. By Tychonoff's 
Theorem,~$u^{(n)}$ converges (in the sense of formula~\eqref{undefequalu}), 
up to a subsequence, to a configuration~$u$. Given any finite subset~$\Gamma \subset \Z^d$, the sequence~$u^{(n)}$ actually coincides with~$u$ on~$\Gamma$, provided~$n$ is large enough (in dependence of~$\Gamma$). Therefore, we deduce from~\eqref{unplanelike2} that
$$
\partial u \subset \left\{ i \in \Z^d : \frac{\omega}{|\omega|} \cdot i \in [0, M] \right\}.
$$
The proof of the fact that~$u$ is a ground state for~$H$ is analogous to that displayed in the previous subsection (and easier).

Theorem~\ref{mainthm} is thus now proved completely (in the general setting).

\section{Proof of Theorem~\ref{mainthm} for power-like interactions
with no magnetic term} \label{mainPLsec}

In this section we show that when~$J$ satisfies assumption~\eqref{Jpowerlike} and no magnetic 
field~$h$ is incorporated in the Hamiltonian~$H$, the width~$M$ of the strip~$\S_\omega^M$ appearing 
in the statement of Theorem~\ref{mainthm} can be further specified. Indeed, we shall show that~$M$ 
can be chosen of the form~$M = M_0 \tau$, for some~$M_0 > 0$ only depending on~$d$,~$s$,~$\lambda$ and~$\Lambda$.

To show the validity of this fact, we remark that it is enough to adapt to this specific setting 
the sole arguments contained in Subsection~\ref{unconstrsub}, since that is the only point of the 
proof displayed in Section~\ref{mainsec} where the width~$M$ is made precise. As a first step toward 
this goal, we obtain some density estimates for the level sets of the minimizers of~$H$ inside cubes that intercepts their interfaces.

We stress that throughout the whole section,~$J$ is supposed to fulfill hypothesis~\eqref{Jpowerlike} 
(in addition to~\eqref{Jsymm},~\eqref{Jzero} and~\eqref{Jper}) and the magnetic term~$h$ vanishes, i.e.
$$
h_i = 0 \quad \mbox{for any } i \in \Z^d.
$$

\subsection{Density estimates}

Here, we collect some results that aim to quantify the size of the level sets of a non-trivial minimizer~$u$. 
The main result is Proposition~\ref{densestprop}, where optimal density estimates are obtained.

We begin with a few auxiliary results. The first is a purely geometrical estimate, reminiscent of the one contained in~\cite[Lemma~6.1]{DNPV12}.

\begin{lemma}
Let~$\Gamma \subset \Z^d$ be any finite, non-empty set and~$i \in \Z^d$. Then, it holds
\begin{equation} \label{isoper}
\sum_{j \in \Z^d \setminus \Gamma} \frac{1}{|i - j|_\infty^{d + s}} \ge c \left( \# \Gamma \right)^{- s/d},
\end{equation}
for some constant~$c > 0$ depending only on~$s$.
\end{lemma}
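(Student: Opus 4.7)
The plan is to reduce to the case $i\in\Gamma$ (since otherwise the left-hand side contains the term $|i-i|_\infty^{-d-s}$ and is $+\infty$), and then apply a discrete bathtub/rearrangement principle together with a dyadic (shell) computation. I would set $N:=\#\Gamma$ and argue that the quantity on the left-hand side is minimized, among all sets of cardinality $N$ containing $i$, by the set $\Gamma^\star$ consisting of the $N$ sites of $\Z^d$ closest to $i$ in $\ell^\infty$-distance. This is immediate: if some $j^\star\in\Gamma$ lies farther from $i$ than some $j'\notin\Gamma$, then swapping $j^\star$ with $j'$ decreases the exterior sum because $|i-j'|_\infty^{-d-s}>|i-j^\star|_\infty^{-d-s}$; iterating yields the optimality of $\Gamma^\star$.

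Next I would compare $\Gamma^\star$ with an $\ell^\infty$ cube. If $R$ is the largest integer with $(2R+1)^d\le N$, then $Q_R(i)\subseteq\Gamma^\star$, hence
$$
\sum_{j\in\Z^d\setminus\Gamma}|i-j|_\infty^{-d-s}\ \ge\ \sum_{j\in\Z^d\setminus\Gamma^\star}|i-j|_\infty^{-d-s}\ \ge\ \sum_{\substack{j\in\Z^d\\|j-i|_\infty\ge R+1}}|i-j|_\infty^{-d-s}.
$$
By translation invariance I may assume $i=0$ and decompose the last sum into shells $S_m=\{j\in\Z^d:|j|_\infty=m\}$, whose cardinality is $(2m+1)^d-(2m-1)^d$, a quantity comparable to $m^{d-1}$ with constants depending only on $d$. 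Therefore
$$
\sum_{\substack{j\in\Z^d\\|j|_\infty\ge R+1}}\frac{1}{|j|_\infty^{d+s}}=\sum_{m=R+1}^{+\infty}\frac{(2m+1)^d-(2m-1)^d}{m^{d+s}}\ \ge\ c_1\sum_{m=R+1}^{+\infty}\frac{1}{m^{1+s}}\ \ge\ c_2 (R+1)^{-s},
$$
with $c_1,c_2>0$ depending only on $d$ and $s$.

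Finally I would translate the bound in terms of $N$. Since $(2R+3)^d>N$ by the definition of $R$, we have $R+1\ge \tfrac{1}{2}N^{1/d}$ (for $N\ge 1$), so $(R+1)^{-s}\ge 2^{-s}N^{-s/d}$, which gives the required inequality with $c=2^{-s}c_2$. The argument is essentially the standard dyadic estimate underlying \cite[Lemma~6.1]{DNPV12} adapted to the discrete $\ell^\infty$ setting; the only step requiring a little care is the rearrangement/bathtub lemma, but since the weights $|i-j|_\infty^{-d-s}$ are strictly decreasing in $|i-j|_\infty$, the exchange argument sketched above suffices, so I do not anticipate a genuine obstacle.
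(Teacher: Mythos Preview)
Your overall strategy is correct and essentially coincides with the paper's: both reduce to the exterior of an $\ell^\infty$-cube via a rearrangement (the paper enlarges $\Gamma$ to a set $\Gamma^*$ of cardinality exactly $(2\ell+1)^d$ and uses a cardinality-matching swap to compare with $Q_\ell(i)$, while you keep $\#\Gamma=N$ and rearrange to the $N$ nearest sites), followed by the same shell computation. However, two of your inequalities are written in the wrong direction.

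First, from $Q_R(i)\subseteq\Gamma^\star$ one obtains $\Z^d\setminus\Gamma^\star\subseteq\{|j-i|_\infty\ge R+1\}$, which gives $\le$, not $\ge$, in the second inequality of your display. The remedy is to use the opposite containment: since $N<(2R+3)^d$, the $N$ nearest sites satisfy $\Gamma^\star\subseteq Q_{R+1}(i)$, hence
\[
\sum_{j\in\Z^d\setminus\Gamma^\star}|i-j|_\infty^{-(d+s)}\ \ge\ \sum_{|j-i|_\infty\ge R+2}|i-j|_\infty^{-(d+s)}\ \ge\ c_2\,(R+2)^{-s}.
\]
Second, in the final step you deduce $(R+1)^{-s}\ge 2^{-s}N^{-s/d}$ from $R+1\ge\tfrac12 N^{1/d}$; but $t\mapsto t^{-s}$ is decreasing, so a lower bound on $R+1$ yields only an \emph{upper} bound on $(R+1)^{-s}$. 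What is needed is an upper bound on the radius: from $(2R+1)^d\le N$ one has $R+2\le(N^{1/d}+3)/2\le 2N^{1/d}$, and therefore $(R+2)^{-s}\ge 2^{-s}N^{-s/d}$. With these two corrections the argument is complete.
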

\begin{proof}
Take~$\ell \in \N$ in such a way that
\begin{equation} \label{isoperpretech}
(2 \ell - 1)^d \le \# \Gamma < (2 \ell + 1)^d,
\end{equation}
and let~$\Gamma^* \supset \Gamma$ be any set with cardinality~$\# \Gamma^* = (2 \ell + 1)^d$. Notice that
\begin{align*}
\# (Q_\ell(i) \setminus \Gamma^*) & = \# Q_\ell(i) - \# (\Gamma^* \cap Q_\ell(i)) \\
& = \# \Gamma^* - \# (\Gamma^* \cap Q_\ell(i)) \\
& = \# (\Gamma^* \setminus Q_\ell(i)),
\end{align*}
and hence
$$
\sum_{j \in Q_\ell(i) \setminus \Gamma^*} \frac{1}{|i - j|_\infty^{d + s}} \ge \frac{\# (Q_\ell(i) \setminus \Gamma^*)}{\ell^{d + s}} = \frac{\# (\Gamma^* \setminus Q_\ell(i))}{\ell^{d + s}} \ge \sum_{j \in \Gamma^* \setminus Q_\ell(i)} \frac{1}{|i - j|_\infty^{d + s}}.
$$
Thanks to the above inequality, we compute
\begin{equation} \label{isopertech}
\begin{aligned}
\sum_{j \in \Z^d \setminus \Gamma} \frac{1}{|i - j|_\infty^{d + s}} & \ge \sum_{j \in \Z^d \setminus \Gamma^*} \frac{1}{|i - j|_\infty^{d + s}} \\
& = \sum_{j \in Q_\ell(i) \setminus \Gamma^*} \frac{1}{|i - j|_\infty^{d + s}} + \sum_{j \in \Z^d \setminus \left( \Gamma^* \cup Q_\ell(i) \right)} \frac{1}{|i - j|_\infty^{d + s}} \\
& \ge \sum_{j \in \Gamma^* \setminus Q_\ell(i)} \frac{1}{|i - j|_\infty^{d + s}} + \sum_{j \in \Z^d \setminus \left( \Gamma^* \cup Q_\ell(i) \right)} \frac{1}{|i - j|_\infty^{d + s}} \\
& = \sum_{j \in \Z^d \setminus Q_\ell(i)} \frac{1}{|i - j|_\infty^{d + s}}.
\end{aligned}
\end{equation}
On the other hand, recalling the notation on~\eqref{boundaryS},
\begin{align*}
\sum_{j \in \Z^d \setminus Q_\ell(i)} \frac{1}{|i - j|_\infty^{d + s}} & = 
\sum_{k \in \Z^d \setminus Q_\ell} \frac{1}{|k|_\infty^{d + s}} = \sum_{m = \ell + 1}^{+\infty} \frac{\# S_m}{m^{d + s}} 
\ge \sum_{m = \ell + 1}^{+\infty} \frac{1}{m^{1 + s}} \ge \frac{(\ell + 1)^{-s}}{s}.
\end{align*}
Accordingly, by this,~\eqref{isopertech} and~\eqref{isoperpretech}, we finally get
$$
\sum_{j \in \Z^d \setminus \Gamma} \frac{1}{|i - j|_\infty^{d + s}} \ge \frac{(\ell + 1)^{-s}}{s} = \frac{(2 \ell - 1)^{- s}}{s} \left( \frac{2 \ell - 1}{\ell + 1} \right)^s \ge \frac{\left( \# \Gamma \right)^{- s / d}}{2^s s},
$$
that is~\eqref{isoper}.
\end{proof}

As a corollary, we immediately deduce the following discrete, non-local isoperimetric-type inequality. See 
e.g.~\cite{FS08, FFMMM15, DCNRV15} for similar results and further applications in a fairly related continuous setting.

\begin{corollary} \label{isopercor}
Let~$\Gamma \subset \Z^d$ be any finite set. Then, it holds
$$
\sum_{i \in \Gamma, \, j \in \Z^d \setminus \Gamma} \frac{1}{|i - j|_\infty^{d + s}} \ge c \left( \# \Gamma \right)^{\frac{d - s}{d}},
$$
for some constant~$c > 0$ depending only on~$s$.
\end{corollary}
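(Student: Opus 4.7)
The plan is very short: the corollary follows by integrating (i.e.~summing) the pointwise inequality~\eqref{isoper} of the preceding lemma over $i \in \Gamma$, so there is no real obstacle to overcome beyond checking the arithmetic.

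First I would dispose of the trivial case $\Gamma = \emptyset$, in which both sides of the claimed inequality vanish and there is nothing to prove. So I may assume $\Gamma$ is a finite, non-empty subset of $\Z^d$, in which case each individual $i \in \Gamma$ satisfies the hypothesis of the previous lemma with the same set $\Gamma$.

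Applying \eqref{isoper} pointwise in $i \in \Gamma$ gives
$$
\sum_{j \in \Z^d \setminus \Gamma} \frac{1}{|i - j|_\infty^{d + s}} \ge c \, (\# \Gamma)^{-s/d}
$$
for every $i \in \Gamma$, with $c > 0$ depending only on $s$. Summing this estimate over all $i \in \Gamma$ yields
$$
\sum_{i \in \Gamma, \, j \in \Z^d \setminus \Gamma} \frac{1}{|i - j|_\infty^{d + s}} \ge c \, (\# \Gamma) \cdot (\# \Gamma)^{-s/d} = c \, (\# \Gamma)^{\frac{d - s}{d}},
$$
which is exactly the desired inequality. The constant $c$ is unchanged from the previous lemma and hence depends only on $s$, so the statement is established. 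The only (minor) point worth noting is that the double sum on the left is automatically well-defined as an extended non-negative quantity, so the exchange of summation used implicitly above (summing in $i$ first rather than in $j$) requires no justification.
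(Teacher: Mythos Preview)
Your proof is correct and matches the paper's approach: the paper does not even write out a proof, stating only that the result follows ``as a corollary'' of the preceding lemma, which is exactly the summation over~$i \in \Gamma$ that you carry out.
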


With this in hand, we may now head to the main result of this subsection: the density estimates.

\begin{proposition} \label{densestprop}
Let~$u$ be a minimizer for~$H$ in~$Q_\ell(q)$, for some~$q \in \Z^d$ and~$\ell \in \N$. If~$q \in \partial u$, then
$$
\min \Big\{ \# \left( \left\{ u = - 1 \right\} \cap Q_\ell(q) \right), \# \left( \left\{ u = 1 \right\} \cap Q_\ell(q) \right) \Big\} \ge \bar{c} \, \ell^d,
$$
for some constant~$\bar{c} > 0$ depending only on~$d$,~$s$,~$\lambda$ and~$\Lambda$.
\end{proposition}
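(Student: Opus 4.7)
The plan is to adapt to the discrete long-range setting the classical density estimate for nonlocal minimal surfaces developed in~\cite{CRS10}; see also~\cite{SV13}.

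First, I would reduce the problem by symmetry. Since the magnetic field vanishes throughout this section, the map $u \mapsto -u$ preserves $H$, so it suffices to establish the lower bound for $a(\ell) := \#(\{u = 1\} \cap Q_\ell(q))$. The analogous estimate for $\{u = -1\} \cap Q_\ell(q)$ then follows by applying the argument to $-u$ centered at a neighbor $j_0$ of $q$ with $u_{j_0} = -1$ (which exists since $q \in \partial u$), exploiting the inclusion $Q_{\ell - 1}(j_0) \subseteq Q_\ell(q)$ and the fact that $-u$ is a minimizer on this inner cube.

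The core of the argument is a minimality comparison at each scale $k \in \{0, 1, \ldots, \ell-1\}$. For fixed $k$, set $A_k := \{u = 1\} \cap Q_k(q)$ with $a(k) := \# A_k$, and consider the competitor $v^{(k)}$ that coincides with $u$ outside $Q_k(q)$ and is identically $-1$ on $Q_k(q)$. Since $u$ minimizes $H$ in $Q_k(q)$ (by Remark~\ref{minincrmk} together with $Q_k(q) \subseteq Q_\ell(q)$), the inequality $H_{Q_k(q)}(u) \leq H_{Q_k(q)}(v^{(k)})$ holds. Expanding both Hamiltonians and cancelling the common contributions (using the absence of magnetic term) yields, after adding the RHS term to both sides,
$$
\sum_{i \in A_k,\, j \in \Z^d \setminus A_k} J_{ij} \leq 2 \sum_{i \in A_k,\, j \notin Q_k(q),\, u_j = 1} J_{ij}.
$$
Applying Corollary~\ref{isopercor} to $A_k$ and using the pointwise bound $J_{ij} \geq \lambda (d|i-j|_\infty)^{-(d+s)}$ coming from~\eqref{Jpowerlike}, the left-hand side is bounded below by $c_0 \lambda\, a(k)^{(d-s)/d}$ for some $c_0 = c_0(d, s) > 0$, yielding the key chain
$$
c_0 \lambda \, a(k)^{(d-s)/d} \leq 2 \sum_{i \in A_k,\, j \notin Q_k(q),\, u_j = 1} J_{ij}. \qquad (\star)
$$

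The heart of the proof, and the main obstacle, is to extract a polynomial lower bound on $a(\ell)$ from $(\star)$. The idea is to exploit the self-similar scaling encoded in~\eqref{Jpowerlike}: bounding the right-hand side of $(\star)$ by partitioning the $+1$ sites outside $Q_k(q)$ into $\ell^\infty$-spherical shells indexed by $m \geq 1$, and using the geometric estimate $\sum_{i \in A_k} |i - j|_\infty^{-(d+s)} \leq C m^{-s}$ (which holds for any $j$ at $\ell^\infty$-distance $k + m$ from $q$, by summing $r^{d-1} r^{-(d+s)} = r^{-1-s}$ over $r \geq m$), one can rewrite $(\star)$ in the schematic form of a discrete Volterra-type inequality relating $a(k)^{(d-s)/d}$ to increments $a(k+m) - a(k+m-1)$ weighted by $m^{-s}$. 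Iterating this relation along a dyadic sequence of scales (or, alternatively, arguing by dichotomy at a single well-chosen scale as in~\cite{SV13}) and using the trivial initial condition $a(0) \geq 1$, one should then propagate polynomial growth up to $a(\ell) \geq \bar c \, \ell^d$. The delicate point is to close this bootstrap so as to keep the constant $\bar c$ dependent only on $d, s, \lambda, \Lambda$; this is precisely where the scale invariance of the power-like kernel~\eqref{Jpowerlike} plays its decisive role.
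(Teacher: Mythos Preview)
Your overall strategy is correct and matches the paper's: compare with the constant $-1$ competitor on each $Q_k(q)$, apply the discrete fractional isoperimetric inequality (Corollary~\ref{isopercor}) to the left-hand side, and bootstrap a recursive inequality. The comparison inequality $(\star)$ is exactly the paper's~\eqref{denstech1} (after adding the common term to both sides), and your symmetry reduction for the $\{u=-1\}$ bound via a neighbouring site is a legitimate way to handle the asymmetry in the definition of~$\partial u$.

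There is, however, a real gap in your treatment of the right-hand side of~$(\star)$. Decomposing the \emph{outer} variable $j$ into shells at $\ell^\infty$-distance $k+m$ from $q$ and invoking $\sum_{i\in A_k}|i-j|_\infty^{-(d+s)}\le Cm^{-s}$ yields the forward inequality
\[
a(k)^{(d-s)/d}\le C\sum_{m\ge 1}m^{-s}\bigl[a(k+m)-a(k+m-1)\bigr],
\]
but this right-hand side may well be $+\infty$: nothing prevents the shell increments from being of order $(k+m)^{d-1}$ (every site on far shells could carry spin $+1$), and $\sum_{m}m^{-s}(k+m)^{d-1}$ diverges for $d\ge 2$. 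The inequality is then vacuous, and neither a dyadic iteration nor a dichotomy can be launched from it without first truncating at scale $\ell$ and controlling the tail beyond $Q_\ell(q)$, a step you do not supply and which does not come for free.

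The paper's fix is to decompose the \emph{inner} variable instead. One first drops the constraint $u_j=1$ on the right of~$(\star)$ (this only enlarges the sum), then writes $A_k=\bigcup_{n=0}^{k}\bigl(A_k\cap S_n(q)\bigr)$; for $i$ in the $n$-th layer and any $j\notin Q_k(q)$ one has $|i-j|_\infty\ge k+1-n$, so the full $j$-sum is at most $C(k+1-n)^{-s}$. This gives the \emph{backward} convolution
\[
a(k)^{(d-s)/d}\le C\sum_{n=0}^{k}(k+1-n)^{-s}\bigl[a(n)-a(n-1)\bigr],
\]
whose right-hand side is manifestly finite. Summing over $k=0,\dots,\ell$ and exchanging the order of summation yields $\sum_{k\le\ell}a(k)^{(d-s)/d}\le C(\ell+1)^{1-s}a(\ell)$, from which $a(\ell)\ge\bar c\,\ell^d$ follows by a direct induction on $\ell$ seeded by $a(0)\ge 1$. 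In short, swap the roles of $i$ and $j$ in your shell decomposition and the argument closes cleanly.
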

\begin{proof}
Of course, we can assume~$q = 0$. We also restrict ourselves to check that
\begin{equation} \label{densest+1}
\# \left( \left\{ u = 1 \right\} \cap Q_\ell \right) \ge \bar{c} \, \ell^d,
\end{equation}
for some~$\bar{c} > 0$, the estimate for the set~$\left\{ u = - 1 \right\} \cap Q_\ell$ being completely analogous.

For~$m = 0, \ldots, \ell$, we set
$$
V_m := \left\{ u = 1 \right\} \cap Q_m, \quad A_m := \left\{ u = 1 \right\} \cap S_m,
$$
and
$$
v_m := \# V_m, \quad a_m := \# A_m.
$$
We consider the configuration~$w$ defined by
$$
w_i := \begin{cases}
-1 & \quad \mbox{if } i \in Q_m, \\
u_i & \quad \mbox{if } i \in \Z^d \setminus Q_m.
       \end{cases}
$$
By its definition,~$w$ coincides with~$u$ outside of~$Q_m$. Hence, by the minimimality of~$u$, we get
$$
H_{Q_m}(u) \le H_{Q_m}(w).
$$
Since~$h = 0$, we may rewrite this inequality as
$$
\sum_{i, j \in Q_m} J_{i j} (1 - u_i u_j) + 2 \sum_{i \in Q_m, j \in \Z^d \setminus Q_m} J_{i j} (1 - u_i u_j) \le 2 \sum_{i \in Q_m, j \in \Z^d \setminus Q_m} J_{i j} (1 + u_j),
$$
and, rearranging its terms conveniently,
$$
\sum_{i \in V_m, j \in Q_m \setminus V_m} J_{i j} + \sum_{i \in V_m, j \in \{ u = - 1 \} \setminus Q_m} J_{i j} \le \sum_{i \in V_m, j \in \{ u = 1 \} \setminus Q_m} J_{i j}.
$$
By adding to both sides the series
$$
\sum_{i \in V_m, j \in \{ u = 1 \} \setminus Q_m} J_{i j},
$$
and taking advantage of~\eqref{Jpowerlike}, we then find
\begin{equation} \label{denstech1}
\sum_{i \in V_m, j \in \Z^d \setminus V_m} \frac{1}{|i - j|_\infty^{d + s}} \le c_1 \sum_{i \in V_m, j \in \{ u = 1 \} \setminus Q_m} \frac{1}{|i - j|_\infty^{d + s}},
\end{equation}
for some~$c_1 > 0$.

Now we deal with the two sides of~\eqref{denstech1} separately. On the one hand, we apply Corollary~\ref{isopercor} (with~$\Gamma := V_m$) and obtain that
\begin{equation} \label{denstech2}
\sum_{i \in V_m, j \in \Z^d \setminus V_m} \frac{1}{|i - j|_\infty^{d + s}} \ge c_2 v_m^{\frac{d - s}{d}},
\end{equation}
for some~$c_2 > 0$. On the other hand, we compute
\begin{align*}
&\sum_{i \in V_m, j \in \{ u = 1 \} \setminus Q_m} \frac{1}{|i - j|_\infty^{d + s}} \le \sum_{i \in V_m, j \in \Z^d \setminus Q_m} \frac{1}{|i - j|_\infty^{d + s}} = \sum_{n = 0}^m \sum_{i \in A_n} \sum_{|j|_\infty \ge m + 1} \frac{1}{|i - j|_\infty^{d + s}} \\
& \qquad\qquad \le \sum_{n = 0}^m a_n \sum_{|k|_\infty \ge m + 1 - n} \frac{1}{|k|_\infty^{d + s}} \le 3^d d \sum_{n = 0}^m a_n \sum_{r = m + 1 - n}^{+ \infty} \frac{1}{r^{1 + s}} \\
& \qquad\qquad \le c_3 \sum_{n = 0}^m (m + 1 - n)^{-s} a_n,
\end{align*}
for some~$c_3 > 0$. The combination of this,~\eqref{denstech2} and~\eqref{denstech1} yields
$$
v_m^{\frac{d - s}{d}} \le c_4 \sum_{n = 0}^m (m + 1 - n)^{-s} a_n,
$$
for some~$c_4 > 0$. We now sum up the above inequality on~$m = 0, \ldots, \ell$. We get
\begin{align*}
\sum_{m = 0}^\ell v_m^{\frac{d - s}{d}} & \le c_4 \sum_{m = 0}^\ell \sum_{n = 0}^m (m + 1 - n)^{-s} a_n = c_4 \sum_{n = 0}^\ell a_n \sum_{m = n}^\ell (m + 1 - n)^{-s} \\
& = c_4 \sum_{n = 0}^\ell a_n \sum_{r = 1}^{\ell + 1 - n} r^{-s} \le c_5 \sum_{n = 0}^\ell (\ell + 1 - n)^{1 - s} a_n \le c_5 (\ell + 1)^{1 - s} \sum_{n = 0}^\ell a_n,
\end{align*}
that is
\begin{equation} \label{denstech3}
\sum_{m = 0}^\ell v_m^{\frac{d - s}{d}} \le c_5 (\ell + 1)^{1 - s} v_\ell,
\end{equation}
for some constant~$c_5 > 0$.

We now claim that~\eqref{denstech3} implies the validity of~\eqref{densest+1}, with
\begin{equation} \label{cbardef}
\bar{c} := \left[ \frac{4^{- d - 1 + s}}{c_5 (d + 1 - s)} \right]^{d/s}.
\end{equation}
To see this, we argue by induction. Of course, the claim holds true for~$\ell = 0, 1$, as~$0 \in \partial u$. Therefore, we take~$\ell \ge 2$ and assume that
$$
v_m \ge \bar{c} \, m^d \quad \mbox{for any } m \in \{ 0, \ldots, \ell - 1 \}.
$$
Using~\eqref{denstech3} and~\eqref{cbardef}, we have
\begin{align*}
& v_\ell  \ge \frac{(\ell + 1)^{s - 1}}{c_5} \sum_{m = 0}^{\ell - 1} v_m^{\frac{d - s}{d}} \ge \frac{\bar{c}^{\frac{d - s}{d}}}{c_5} \, (\ell + 1)^{s - 1}  \sum_{m = 0}^{\ell - 1} m^{d - s} \\
&\qquad\quad \ge \frac{\bar{c}^{\frac{d - s}{d}}}{c_5 (d + 1 - s)} \, (\ell + 1)^{s - 1} (\ell - 1)^{d + 1 - s} \ge \frac{\bar{c}^{\frac{d - s}{d}}}{c_5 (d + 1 - s)} \, (2 \ell)^{s - 1} \left( \frac{\ell}{2} \right)^{d + 1 - s} \\
& \qquad\quad\ge \frac{\bar{c}^{\frac{d - s}{d}} 4^{- d - 1 + s}}{c_5 (d + 1 - s)} \, \ell^d = \bar{c} \, \ell^d,
\end{align*}
that is our claim. Hence, the proof of the proposition is concluded.
\end{proof}

A first application of the estimates just proved is contained in the next corollary, that establishes a bound from below for the interaction energy of non-trivial minimizers.

\begin{corollary} \label{enbelowcor}
Let~$u$ be a minimizer for~$H$ in~$Q_\ell(q)$, for some~$q \in \Z^d$ and~$\ell \in \N$. If~$q \in \partial u$, then
\begin{equation} \label{enbelow}
I_{Q_\ell(q), \, Q_\ell(q)}(u) \ge c_\star \ell^{d - s},
\end{equation}
for some constant~$c_\star > 0$ depending only on~$d$,~$s$,~$\lambda$ and~$\Lambda$.
\end{corollary}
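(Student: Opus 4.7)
The plan is to combine the density estimates from Proposition~\ref{densestprop} with the pointwise lower bound on $J$ coming from~\eqref{Jpowerlike}, plus the trivial upper bound on the distance between two sites of $Q_\ell(q)$.

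First I would rewrite the interaction term so that only the contributions from the interface are visible. Setting $A := \{u=1\} \cap Q_\ell(q)$ and $B := \{u=-1\} \cap Q_\ell(q)$, the factor $1-u_iu_j$ equals $2$ when $u_i \ne u_j$ and $0$ otherwise, so
\begin{equation*}
I_{Q_\ell(q),\,Q_\ell(q)}(u) \;=\; 2 \sum_{\substack{i,j \in Q_\ell(q) \\ u_i \ne u_j}} J_{ij} \;=\; 4 \sum_{i \in A,\, j \in B} J_{ij}.
\end{equation*}

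Next I would use~\eqref{Jpowerlike} to bound each $J_{ij}$ from below by $\lambda/|i-j|^{d+s}$. Since every pair $i, j \in Q_\ell(q)$ satisfies $|i-j|_\infty \le 2\ell$ and hence (in the $\ell^1$ norm employed throughout the paper) $|i-j| \le 2d\ell$, I obtain the elementary bound
\begin{equation*}
J_{ij} \;\ge\; \frac{\lambda}{|i-j|^{d+s}} \;\ge\; \frac{\lambda}{(2d\ell)^{d+s}} \qquad \text{for all } i,j \in Q_\ell(q), \; i \ne j.
\end{equation*}
Pairs with $i = j$ contribute nothing to the sum over $A \times B$ since $A$ and $B$ are disjoint, so this bound may be inserted without loss.

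At this point the estimate reduces to counting. Since $q \in \partial u$, Proposition~\ref{densestprop} yields $\#A, \#B \ge \bar c \,\ell^d$ with $\bar c$ depending only on $d, s, \lambda, \Lambda$. Combining the previous two displays,
\begin{equation*}
I_{Q_\ell(q),\,Q_\ell(q)}(u) \;\ge\; \frac{4\lambda}{(2d\ell)^{d+s}} \,\#A \cdot \#B \;\ge\; \frac{4\lambda \bar c^{\,2}}{(2d)^{d+s}} \,\ell^{d-s},
\end{equation*}
which is~\eqref{enbelow} with $c_\star := 4\lambda \bar c^{\,2}/(2d)^{d+s}$, a constant depending only on $d, s, \lambda, \Lambda$, as required.

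There is no real obstacle here: the heavy lifting was already performed in Proposition~\ref{densestprop}, and the only mild care needed is to pass between the $\ell^1$ and $\ell^\infty$ norms used in the definition of $Q_\ell(q)$ and in the kernel bound~\eqref{Jpowerlike}, which only affects the constant.
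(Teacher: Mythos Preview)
Your proof is correct and follows essentially the same approach as the paper: bound $J_{ij}$ from below via~\eqref{Jpowerlike}, use the trivial diameter bound $|i-j|\le 2d\ell$ on the cube, and invoke the density estimates of Proposition~\ref{densestprop} to count the pairs in $A\times B$. The paper presents the computation in one display, passing through the $\ell^\infty$ norm to reach the same $(2d\ell)^{d+s}$ factor, but the argument is identical.
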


\begin{proof}
We simply apply hypothesis~\eqref{Jpowerlike} and Proposition~\ref{densestprop} to deduce that
\begin{align*}
I_{Q_\ell(q), \, Q_\ell(q)}(u) & \ge \frac{\lambda}{d^{d + s}} \sum_{i, j \in Q_\ell(q)} \frac{1 - u_i u_j}{|i - j|_\infty^{d + s}} \\
& \ge \frac{4 \lambda}{(2 d \ell)^{d + s}} \, \left[\# \left( \{ u = -1 \} \cap Q_\ell(q) \} \right) \right]\cdot\left[\# \left( \{ u = 1 \} \cap Q_\ell(q) \} \right)\right] \\
& \ge c_\star \ell^{d - s},
\end{align*}
for some~$c_\star > 0$, as desired.
\end{proof}

\begin{remark}
The bound~\eqref{enbelow} can be seen as a counterpart to the estimate from~\emph{above} obtained in Proposition~\ref{enestprop}. More specifically, Corollary~\ref{enbelowcor} shows that the energy estimate~\eqref{enest} gives an optimal bound for the energy of a non-trivial minimizer~$u$ in a cube~$Q_\ell(q)$, as a function of~$\ell$. Indeed, notice that under hypothesis~\eqref{Jpowerlike}, we can make the choice
$$
\sigma(R) = \frac{c_d \Lambda}{s} R^{- s},
$$
for some dimensional constant~$c_d > 0$. Thanks to this observation and recalling Remark~\ref{enestrmk}, estimate~\eqref{enest} becomes in this setting just
\begin{equation} \label{enestbis}
H_{Q_\ell(q)}(u) \le \bar{C} \ell^{d - s},
\end{equation}
for some constant~$\bar{C} \ge 1$ depending only on~$d$,~$s$ and~$\Lambda$. As a result, both estimates~\eqref{enbelow} and~\eqref{enest} (in its form~\eqref{enestbis} just deduced) show the same dependence on~$\ell$.
\end{remark}

We conclude the subsection with a result that sharpens 
the density estimates of Proposition~\ref{densestprop}: 
the so-called \emph{clean ball condition}. 
We obtain it by applying both Corollary~\ref{enbelowcor} 
and Proposition~\ref{densestprop} itself.

\begin{proposition} \label{cleanballprop}
Suppose that~$J$ satisfies condition~\eqref{Jpowerlike} and that~$h = 0$. 
Let~$u$ be a minimizer for~$H$ in~$Q_\ell(q)$, for some~$q \in \Z^d$ and~$\ell \in \N$. 
If~$q \in \partial u$, then there exist two sites~$q_-, q_+ \in Q_\ell(q)$ and a constant~$\kappa \in (0, 1)$, 
depending only on~$d$,~$s$,~$\lambda$ and~$\Lambda$, such that
$$
Q_{\lfloor \kappa \ell \rfloor}(q_-) \subseteq \{ u = -1 \} \cap Q_\ell(q) \quad \mbox{and} \quad Q_{\lfloor \kappa \ell \rfloor}(q_+) \subseteq \{ u = 1 \} \cap Q_\ell(q).
$$
\end{proposition}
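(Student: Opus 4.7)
The plan is to prove the existence of $q_+$ yielding the clean $+1$ cube; the existence of $q_-$ will then follow by running the same argument on the spin-reversed configuration $-u$, which is still a minimizer of $H$ on $Q_\ell(q)$ because $h \equiv 0$ makes $H$ invariant under a global sign flip, and for which any neighbour of $q$ with opposite spin (which exists since $q \in \partial u$) plays the role of an interface site.

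I would argue by contradiction: suppose that no cube $Q_r(p) \subseteq Q_\ell(q)$ with $r := \lfloor \kappa \ell \rfloor$ lies entirely in $\{u = +1\}$. Pack $Q_\ell(q)$ with a disjoint family of sub-cubes $T_n = Q_r(c_n)$ of side $2r+1$; the total count is of order $\kappa^{-d}$. By Proposition~\ref{densestprop}, $\#(\{u = +1\} \cap Q_\ell(q)) \ge \bar c \ell^d$, and since each tile contains at most $(2r+1)^d$ sites, a trivial pigeonhole forces at least $c_1 \kappa^{-d}$ tiles to contain a $+1$ site. By the contradictory hypothesis, each such tile must also contain a $-1$ site, hence a pair of adjacent sites with opposite spin; the $+1$ member of this pair is an interface site $q_n \in \partial u \cap T_n$.

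Next I would isolate a sparse sub-collection of these tiles by restricting the centers $c_n$ to a fixed coset of $2\Z^d$ within our grid and to the interior region $Q_{\ell - 2r}(q)$. For $\kappa$ sufficiently small, this preserves at least $c_2 \kappa^{-d}$ tiles whose doublings $Q_{2r}(c_n)$ are pairwise disjoint and contained in $Q_\ell(q)$. For each surviving tile, $Q_r(q_n) \subseteq Q_{2r}(c_n) \subseteq Q_\ell(q)$, and Remark~\ref{minincrmk} guarantees that $u$ is still a minimizer of $H$ on $Q_r(q_n)$. Since $q_n \in \partial u$, Corollary~\ref{enbelowcor} provides
$$
I_{Q_r(q_n), Q_r(q_n)}(u) \ge c_\star r^{d-s}.
$$
Summing these contributions over the disjoint cubes $Q_r(q_n) \subseteq Q_\ell(q)$ yields
$$
I_{Q_\ell(q), Q_\ell(q)}(u) \ge c_2 c_\star \kappa^{-d} r^{d-s} \ge c_3 \kappa^{-s} \ell^{d-s}.
$$

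The contradiction is then produced by confronting this with the upper energy estimate~\eqref{enestbis}, valid here because $h \equiv 0$: it gives $I_{Q_\ell(q), Q_\ell(q)}(u) \le H_{Q_\ell(q)}(u) \le \bar C \ell^{d-s}$, where I used that $B_{Q_\ell(q)} = 0$ and $I_{Q_\ell(q), \Z^d \setminus Q_\ell(q)}(u) \ge 0$. Combining, $\kappa^{-s} \le \bar C / c_3$, an inequality that is violated as soon as $\kappa$ is taken smaller than $(c_3/\bar C)^{1/s}$, yielding a contradiction and so the existence of the clean $+1$ cube. The main obstacle lies in the combinatorial bookkeeping of the second step: one must cash the volume $\kappa^{-d}$ of candidate interface sites into genuinely disjoint cubes of side $2r+1$ contained in $Q_\ell(q)$, so that the sum of the optimal local lower bounds reaches precisely the scale $\kappa^{-s} \ell^{d-s}$ that matches the upper energy estimate; any looser accounting would produce a suboptimal exponent and fail to single out a positive threshold for $\kappa$.
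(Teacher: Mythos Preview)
Your proof is correct and follows essentially the same strategy as the paper: tile $Q_\ell(q)$ by sub-cubes of scale $\kappa\ell$, use the density estimate to force a positive fraction of them to contain an interface site, recenter and sparsify to get pairwise disjoint cubes (the paper does this with a factor-$3^d$ loss rather than your coset-of-$2\Z^d$ trick, but the effect is identical), apply Corollary~\ref{enbelowcor} in each, and contradict~\eqref{enestbis}. The only cosmetic differences are that the paper handles small $\ell$ by a one-line triviality and obtains the $-1$ cube by directly swapping the roles of the two level sets rather than passing to $-u$.
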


\begin{proof}
We prove the statement concerning the level set~$\{ u = 1 \} \cap Q_\ell(q)$, the other one being completely analogous. Moreover, we restrict ourselves to consider~$\ell \ge \ell_0$, for a large value~$\ell_0 \ge 2$ to be later specified, as for the case~$\ell < \ell_0$ one can simply choose~$\kappa = 1 / \ell_0$ and~$q_+ = q$.

Fix~$k \in \N$, with
\begin{equation} \label{klelhalf}
k \le \frac{\ell}{2},
\end{equation}
and let~$N \in \N$ be the only integer for which
\begin{equation} \label{Ndef}
(2 k + 1) N \le 2 \ell + 1 < (2 k + 1) (N + 1).
\end{equation}
In view of~\eqref{Ndef}, there is a family~$\QQ = \{ Q^{(n)} \}_{n = 1}^{N^d}$ of~$N^d$ non-overlapping cubes~$Q^{(n)} = Q_k^{(n)}(q^{(n)})$ each contained in~$Q_\ell(q)$, having center~$q^{(n)} \in Q_\ell(q)$ and sides composed by~$2 k + 1$ sites. Observe that we may choose~$\QQ$ so that the union of its elements covers~$Q_{\ell - k}(q)$. Let then~$\widetilde{\QQ} \subseteq \QQ$ be the subfamily of~$\QQ$ made up of those cubes having non-empty intersection with the level set~$\{ u = 1\}$. That is,
$$
\widetilde{\QQ} := \Big\{ Q \in \QQ : \mbox{there exists } i \in Q \mbox{ at which } u_i = 1 \Big\}.
$$

Denoting by~$\widetilde{N} \in \N$ the cardinality of~$\widetilde{\QQ}$, we claim that
\begin{equation} \label{tildeNbound}
\widetilde{N} \ge c_1 N^d,
\end{equation}
for some~$c_1 > 0$ independent of~$N$ and~$\ell$. To check~\eqref{tildeNbound}, we simply apply the density estimate of Proposition~\ref{densestprop} to the cube~$Q_{\ell - k}(q)$ and compute
$$
\bar{c} (\ell - k)^d \le \# \left( \{ u = 1 \} \cap Q_{\ell - k}(q) \right) \le \sum_{n = 1}^{N^d} \# \left( \{ u = 1 \} \cap Q^{(n)} \right) \le \widetilde{N} (2 k + 1)^d.
$$
This,~\eqref{Ndef} and~\eqref{klelhalf} then imply that
$$
\frac{\bar{c}}{2^d} \, \ell^d \le \bar{c} (\ell - k)^d \le \frac{\widetilde{N}}{N^d} (2 \ell + 1)^d \le \frac{\widetilde{N}}{N^d} 3^d \ell^d,
$$
which gives~\eqref{tildeNbound}.

We relabel the cubes of the family~$\widetilde{\QQ}$ in order to 
write~$\widetilde{\QQ} = \{ \widetilde{Q}^{(n)} \}_{n = 1}^{\widetilde{N}}$, with~$\widetilde{Q}^{(n)} = Q_k(\widetilde{q}^{(n)})$, 
with~$\widetilde{q}^{(n)} \in Q_\ell(q)$. To finish the proof, we shall show that we can find a cube~$\widetilde{Q}^{(\bar{n})}$, 
for some~$\bar{n} \in \{ 1, \ldots, \widetilde{N} \}$, such that~$u_i = 1$ at any~$i \in \widetilde{Q}^{(\bar{n})}$. For this, 
we argue by contradiction and in fact suppose that, for any~$n \in \{ 1, \ldots, \widetilde{N} \}$, 
there exists a site~$i^{(n)} \in \widetilde{Q}^{(n)}$ at which~$u_{i^{(n)}} = -1$. By the definition of~$\widetilde{\QQ}$, 
it is then clear that there also exist sites~$j^{(n)} \in \widetilde{Q}^{(n)} \cap \partial u$, for any~$n \in \{ 1, \ldots, \widetilde{N} \}$. 
Up to modifying the family~$\widetilde{\QQ}$ and reducing its cardinality~$\widetilde{N}$ by a factor~$3^d$, 
we may also assume that~$j^{(n)} = \widetilde{q}^{(n)}$. By applying Proposition~\ref{enestprop}, 
Corollary~\ref{enbelowcor} and estimate~\eqref{tildeNbound}, we then get
$$
\bar{C} \ell^{d - s} \ge H_{Q_\ell(q)}(u) \ge \sum_{n = 1}^{\widetilde{N}} I_{Q_k(\widetilde{q}^{(n)}), \, Q_k(\widetilde{q}^{(n)})}(u) \ge c_\star \widetilde{N} k^{d - s} \ge c_\star c_1 N^d k^{d - s},
$$
that, combined with~\eqref{Ndef} and~\eqref{klelhalf}, yields
$$
k \ge c_2 \ell,
$$
for some~$c_2 > 0$ independent of~$\ell$. But this leads to a contradiction, since we are free to take~$k \in \{ 1, \ldots, \lfloor c_2 \ell / 2 \rfloor \}$ and~$\ell \ge \ell_0 := 4 / c_2$.
\end{proof}

We stress that the argument adopted in the above proof is a refined version of the one displayed in Proposition~\ref{uncprop}, in light of the now available density estimates and the optimal energy bound~\eqref{enbelow}. Indeed, Proposition~\ref{cleanballprop} is the main tool that will be used in the next subsection to improve the result of Proposition~\ref{uncprop} and finish the proof of Theorem~\ref{mainthm}.

\subsection{Completion of the proof of Theorem~\ref{mainthm}}

As discussed at the beginning of the present section, to finish the proof of Theorem~\ref{mainthm} we only need to show that in Proposition~\ref{uncprop} we can take
\begin{equation} \label{M0bar}
M_0 := \bar{M}_0 \tau,
\end{equation}
for some~$\bar{M}_0 > 0$ depending only on~$d$,~$s$,~$\lambda$ and~$\Lambda$.

From now on, we freely use the notation adopted in Section~\ref{mainsec} with no further explanation.

In order to prove that Proposition~\ref{uncprop} holds true with~$M_0$ given by~\eqref{M0bar}, it suffices to show that the minimal minimizer~$u = u_\omega^M$ satisfies
\begin{equation} \label{uncPLclaim}
u_i = - 1 \mbox{ for any } i \in Q_{2 d \tau}(\bar{q}), \mbox{ for some } \bar{q} \in \S_\omega^M \mbox{ such that } Q_{2 d \tau}(\bar{q}) \subset \S_\omega^M,
\end{equation}
provided~$M \ge M_0$, with~$M_0$ as in~\eqref{M0bar}. Note that~\eqref{uncPLclaim} is indeed stronger than the claim~\eqref{uncclaim1} that was proved in Proposition~\ref{uncprop}.
By arguing as in the proof of Proposition~\ref{uncprop}, we can reduce~\eqref{uncPLclaim} to the weaker claim that
\begin{equation} \label{uncPLweakclaim}
\mbox{either } u_i = -1 \mbox{ for any } i \in Q_{2 d \tau}(\bar{q}) \mbox{ or } u_i = 1 \mbox{ for any } i \in Q_{2 d \tau}(\bar{q}).
\end{equation}

To check~\eqref{uncPLweakclaim}, we first notice that there are a site~$q \in \S_\omega^M$ and a dimensional constant~$c_\star > 0$ such that~$Q_{3 \ell}(q) \subset \S_\omega^M$, with~$\ell = \lfloor c_\star M \rfloor$. Now, either
\begin{equation} \label{uncPLtech1}
Q_\ell(q) \cap \partial u \ne \varnothing,
\end{equation}
or~$u$ is identically equal to~$-1$ or~$1$ in the whole of~$Q_\ell(q)$. By taking~$M \ge M_0 := (4 d \tau) / c_\star$, this latter fact would imply~\eqref{uncPLweakclaim} and the proof would then be over. Therefore, we suppose that~\eqref{uncPLtech1} is verified and, thus, that there exists a site~$q_\star \in Q_\ell(q) \cap \partial u$.

By Corollary~\ref{GHcor}, the minimal minimizer~$u$ is a minimizer for~$H$ in~$Q_\ell(q_\star) \subset Q_{2 \ell}(q)$ and, hence, Proposition~\ref{cleanballprop} implies that, say,
$$
u_i = -1 \quad \mbox{for any } i \in Q_{\lfloor \kappa \ell \rfloor}(\bar{q}),
$$
for some site~$\bar{q} \in Q_\ell(q_\star)$ and some constant~$\kappa \in (0, 1)$, depending only on~$d$,~$s$,~$\lambda$ and~$\Lambda$. But then,~\eqref{uncPLweakclaim} follows once again by choosing~$M \ge M_0 := (4 d \tau) / (c_\star \kappa)$.

Claim~\eqref{uncPLweakclaim} is thus fully proved and so is Theorem~\ref{mainthm}.

\section{Interlude. Some simple facts about non local perimeter functionals} \label{intersec}

In this intermediate section, we present a couple
of results regarding the set functions~$\L_K$ and~$\Per_K$, introduced 
in~\eqref{LKdef} and~\eqref{PerKdef}, respectively.

Throughout most of the section,~$K: \R^d \times \R^d \to [0, +\infty]$ is 
a general non-negative kernel, not necessarily satisfying any of 
conditions~\eqref{Ksymm} or~\eqref{Kbounds}. In particular,~$K$ 
is never required here to fulfill the periodicity assumption~\eqref{Kper}.

\smallskip

We begin by presenting a lemma that establishes the lower semicontinuity of~$\L_K$ 
with respect to~$L^1$ convergence. As a byproduct, we also obtain the lower 
semicontinuity of the~$K$-perimeter functional.

\begin{lemma} \label{semicontlem}
Let~$\{ A_n \}$ and~$\{ B_n \}$ be two sequences of measurable sets in~$\R^d$. Suppose that there exist two measurable sets~$A, B \subseteq \R^d$ such that~$A_n \rightarrow A$ and~$B_n \rightarrow B$ in~$L^1_\loc$, as~$n \rightarrow +\infty$. Then,
\begin{equation} \label{LKsemicont}
\L_K(A, B) \le \liminf_{n \rightarrow +\infty} \L_K(A_n, B_n).
\end{equation}
In particular,
\begin{equation} \label{PerKsemicont}
\Per_K(A; B) \le \liminf_{n \rightarrow +\infty} \Per_K(A_n; B_n).
\end{equation}
\end{lemma}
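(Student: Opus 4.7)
The strategy is to reduce both inequalities to Fatou's lemma on the product space $\R^d\times\R^d$. First I would rewrite
$$
\L_K(A,B)=\iint_{\R^d\times\R^d}\chi_A(x)\chi_B(y)\,K(x,y)\,dx\,dy,
$$
and observe that, since $A_n\to A$ and $B_n\to B$ in $L^1_{\loc}$, a standard diagonal extraction yields a subsequence $\{n_k\}$ along which $\chi_{A_{n_k}}\to\chi_A$ and $\chi_{B_{n_k}}\to\chi_B$ almost everywhere in $\R^d$, whence $\chi_{A_{n_k}}(x)\chi_{B_{n_k}}(y)\to\chi_A(x)\chi_B(y)$ for a.e.~$(x,y)\in\R^{2d}$.

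To turn this subsequential statement into a genuine $\liminf$ inequality, I would start by selecting a subsequence $\{n_k\}$ which realizes $\liminf_n\L_K(A_n,B_n)$, then extract a further sub-subsequence (still denoted $\{n_k\}$ for brevity) along which the above a.e.\ convergences hold. Since $K\ge 0$ and $\chi_{A_{n_k}}\chi_{B_{n_k}}K\ge 0$, Fatou's lemma directly gives
$$
\L_K(A,B)\le\liminf_k\iint\chi_{A_{n_k}}(x)\chi_{B_{n_k}}(y)K(x,y)\,dx\,dy
=\liminf_n\L_K(A_n,B_n),
$$
which is \eqref{LKsemicont}. No assumption on $K$ beyond non-negativity is used here, so this is essentially automatic.

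For \eqref{PerKsemicont}, I would decompose $\Per_K$ through its definition \eqref{PerKdef} into three summands of the form $\L_K(\cdot,\cdot)$, and observe that the Boolean operations on characteristic functions are continuous with respect to $L^1_{\loc}$-convergence: from $A_n\to A$ and $B_n\to B$ in $L^1_{\loc}$ it follows, via the identities $\chi_{A\cap B}=\chi_A\chi_B$, $\chi_{A\setminus B}=\chi_A(1-\chi_B)$ and $\chi_{\R^d\setminus(A\cup B)}=(1-\chi_A)(1-\chi_B)$, that
$$
A_n\cap B_n\to A\cap B,\quad B_n\setminus A_n\to B\setminus A,\quad A_n\setminus B_n\to A\setminus B,\quad \R^d\setminus(A_n\cup B_n)\to\R^d\setminus(A\cup B)
$$
in $L^1_{\loc}$. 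Then I would apply \eqref{LKsemicont} to each of the three terms and use the superadditivity of $\liminf$, namely $\liminf_n(\alpha_n+\beta_n+\gamma_n)\ge\liminf_n\alpha_n+\liminf_n\beta_n+\liminf_n\gamma_n$, to conclude \eqref{PerKsemicont}.

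The only delicate point is the bookkeeping in the subsequence extraction for \eqref{LKsemicont}; all the remaining ingredients (Fatou, bilinear continuity of Boolean operations on characteristic functions, superadditivity of $\liminf$) are entirely standard. I do not expect any substantive obstacle.
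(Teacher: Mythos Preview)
Your proposal is correct and follows essentially the same route as the paper: select a subsequence realizing the $\liminf$, pass to a further subsequence along which $\chi_{A_{n_k}}$ and $\chi_{B_{n_k}}$ converge a.e., apply Fatou's lemma, and then deduce \eqref{PerKsemicont} by noting that the Boolean combinations $A_n\cap B_n$, $A_n\setminus B_n$, $B_n\setminus A_n$, $\R^d\setminus(A_n\cup B_n)$ inherit $L^1_{\loc}$-convergence and invoking \eqref{LKsemicont} on each of the three terms of $\Per_K$.
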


\begin{proof}
Let~$\{ n_k \}$ be a subsequence along which the~$\liminf$ on the right-hand 
side of~\eqref{LKsemicont} is attained as a limit. By a standard diagonal 
argument and up to selecting a further subsequence (that we do not relabel), 
we have that~$\chi_{A_{n_k}} \rightarrow \chi_A$ and~$\chi_{B_{n_k}} \rightarrow \chi_B$ a.e.~in~$\R^d$, as~$k \rightarrow +\infty$. Then, Fatou's Lemma implies that
\begin{align*}
\L_K(A, B) & = \int_{\R^d} \int_{\R^d} \chi_A(x) \chi_B(y) K(x, y) \, dx\, dy \\
& \le \liminf_{k \rightarrow +\infty} \int_{\R^d} \int_{\R^d} \chi_{A_{n_k}}(x) \chi_{B_{n_k}}(y)
 K(x, y) \, dx \, dy \\
& = \lim_{k \rightarrow +\infty} \L_K(A_{n_k}, B_{n_k}) \\
& = \liminf_{n \rightarrow +\infty} \L_K(A_{n}, B_{n}),
\end{align*}
that is~\eqref{LKsemicont}.

The validity of~\eqref{PerKsemicont} follows at once from~\eqref{LKsemicont} 
after one notices that the convergences of~$A_n$ and~$B_n$ imply that
$$
\begin{cases}
A_n \cap B_n \longrightarrow A \cap B \\
A_n \setminus B_n \longrightarrow A \setminus B \\
B_n \setminus A_n \longrightarrow B \setminus A \\
\R^d \setminus \left( A_n \cup B_n \right) \longrightarrow 
\R^d \setminus \left( A \cup B \right)
\end{cases}
\quad \mbox{in } L^1_\loc,
$$
as~$n \rightarrow +\infty$.
\end{proof}

Next is a simple computation that may be seen as a generalized 
Coarea Formula. See e.g.~\cite{V91} and the very recent~\cite{CSV16, L16} for 
similar results. More precisely, we recall~\eqref{1.16bis} and we prove the following:

\begin{lemma} \label{coarealem}
Let~$\Omega \subseteq \R^d$ be an open set and~$u: \Omega \to \R$ a measurable function. Then,
\begin{equation} \label{coarea}
\K_K(u; \Omega, \Omega) = \int_{-\infty}^{+\infty} \K_K(\chi_{\{ u > t \}}; \Omega, \Omega) \, dt.
\end{equation}
\end{lemma}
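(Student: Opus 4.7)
The plan is to reduce the identity to a pointwise \emph{layer-cake} formula for $|u(x)-u(y)|$ and then to invoke Tonelli's theorem (everything is non-negative, so convergence issues never arise).

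First I would establish the pointwise identity
$$
|u(x)-u(y)| \;=\; \int_{-\infty}^{+\infty}\bigl|\chi_{\{u>t\}}(x)-\chi_{\{u>t\}}(y)\bigr|\,dt,
$$
valid for every $x,y$ at which $u$ is defined. Indeed, set $a:=u(x)$, $b:=u(y)$ and assume without loss of generality that $a\le b$. Then $\chi_{\{u>t\}}(x)\ne\chi_{\{u>t\}}(y)$ precisely when one of the two values is $>t$ and the other is not, which happens if and only if $t\in[a,b)$. Consequently the integrand on the right equals $\chi_{[a,b)}(t)$, and its integral over $\R$ is exactly $b-a=|u(x)-u(y)|$. (If $a=b$, both sides vanish.)

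Once this pointwise identity is in hand, multiplying by the non-negative kernel $K(x,y)$ and integrating over $\Omega\times\Omega$ gives
$$
\K_K(u;\Omega,\Omega)=\int_{\Omega}\!\int_{\Omega}\!\left(\int_{-\infty}^{+\infty}\!\bigl|\chi_{\{u>t\}}(x)-\chi_{\{u>t\}}(y)\bigr|\,dt\right)K(x,y)\,dx\,dy.
$$
Since the integrand is non-negative and jointly measurable in $(t,x,y)$ (the map $t\mapsto\chi_{\{u>t\}}$ is measurable because $\{(t,x):u(x)>t\}$ is measurable in $\R\times\Omega$), Tonelli's theorem applies and allows us to exchange the order of integration, yielding
$$
\K_K(u;\Omega,\Omega)=\int_{-\infty}^{+\infty}\!\left(\int_\Omega\!\int_\Omega\bigl|\chi_{\{u>t\}}(x)-\chi_{\{u>t\}}(y)\bigr|K(x,y)\,dx\,dy\right)dt,
$$
which is exactly~\eqref{coarea} in view of the definition~\eqref{1.16bis}.

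There is essentially no obstacle: the only subtlety is the joint measurability of $(t,x,y)\mapsto|\chi_{\{u>t\}}(x)-\chi_{\{u>t\}}(y)|K(x,y)$, which follows from the measurability of $u$ and $K$, and the applicability of Tonelli's theorem, which is immediate from the non-negativity of the integrand. The formula holds in $[0,+\infty]$, without any need for finiteness of $\K_K(u;\Omega,\Omega)$.
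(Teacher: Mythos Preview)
Your proof is correct and follows essentially the same approach as the paper: establish the pointwise layer-cake identity for $|u(x)-u(y)|$, then swap the order of integration. Your invocation of Tonelli's theorem (rather than Fubini's) and your remark on joint measurability are in fact slightly more careful than the paper's version.
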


\begin{proof}
First of all, notice that, for any~$x, y \in \Omega$, we may write
\begin{equation}\label{123qwe}
|u(x) - u(y)| = \int_{-\infty}^{+\infty} |\chi_{\{ u > t \}}(x) - \chi_{\{ u > t \}} (y)| \, dt.
\end{equation}
Indeed, notice that 
$$ \chi_{\{ u > t \}}(x) - \chi_{\{ u > t \}} (y) =
\begin{cases}
1, & {\mbox{ if }} u(x)>t\ge u(y),\\
-1, & {\mbox{ if }} u(y)>t\ge u(x),\\
0, & {\mbox{ otherwise}}. 
\end{cases}
$$
From this, formula~\eqref{123qwe} easily follows. 

Hence, by~\eqref{123qwe} and Fubini's Theorem, we simply obtain
\begin{align*}
\K_K(u; \Omega, \Omega) & = \int_\Omega \int_\Omega |u(x) - u(y)| K(x, y) \, dx\, dy \\
& = \int_\Omega \int_\Omega \left( \int_{-\infty}^{+\infty} |\chi_{\{ u > t \}}(x) - 
\chi_{\{ u > t \}} (y)| \, dt \right) K(x, y) dx \, dy \\
& = \int_{-\infty}^{+\infty} \left( \int_\Omega \int_\Omega |\chi_{\{ u > t \}}(x) - 
\chi_{\{ u > t \}} (y)| K(x, y) \, dx \, dy \right) dt,
\end{align*}
and~\eqref{coarea} follows.
\end{proof}

We conclude the section with the following basic integrability result.

\begin{lemma} \label{KL1lem}
Suppose that~$K$ satisfies~\eqref{Kbounds} and 
let~$\Omega \subset \R^d$ be a bounded open set with Lipschitz boundary. Then,
\begin{equation} \label{KL1}
K \in L^1(\Omega \times (\R^d \setminus \Omega)).
\end{equation}
\end{lemma}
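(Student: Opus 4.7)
The plan is to reduce the claim to an integrability question for the pure fractional kernel and then exploit the Lipschitz geometry of~$\partial\Omega$. Using only the upper bound in assumption~\eqref{Kbounds}, it suffices to show that
$$
\int_\Omega \int_{\R^d \setminus \Omega} \frac{dy \, dx}{|x-y|^{d+s}} < +\infty.
$$

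First I would estimate the inner integral. For each~$x \in \Omega$, set~$\delta(x) := \dist(x, \R^d \setminus \Omega)$. Since~$\Omega$ is open,~$\delta(x) > 0$ and~$\R^d \setminus \Omega \subseteq \R^d \setminus B_{\delta(x)}(x)$, so a computation in spherical coordinates yields
$$
\int_{\R^d \setminus \Omega} \frac{dy}{|x-y|^{d+s}} \le \int_{\R^d \setminus B_{\delta(x)}(x)} \frac{dy}{|x-y|^{d+s}} = \frac{C_d}{s} \, \delta(x)^{-s},
$$
for some dimensional constant~$C_d > 0$. Crucially,~$\delta(x)^{-s}$ is integrable near~$\partial\Omega$ precisely because~$s < 1$; this is where the hypothesis~$s \in (0,1)$ from~\eqref{Kbounds} comes in.

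Next I would show that~$\int_\Omega \delta(x)^{-s} \, dx < +\infty$ using the Lipschitz regularity of~$\partial\Omega$. The standard tool is the following fact: for a bounded Lipschitz domain~$\Omega$, there exists~$\delta_0 > 0$ such that the distance function~$\delta$ is Lipschitz on~$\Omega$ and the layer cake identity combined with a uniform bound on~$\Haus^{d-1}(\{x \in \Omega : \delta(x) = t\})$ for~$t \in (0, \delta_0)$ (which follows from covering~$\partial\Omega$ by finitely many Lipschitz graph patches) gives
$$
\int_{\{\delta < \delta_0\}} \delta(x)^{-s} \, dx \le C \int_0^{\delta_0} t^{-s} \, dt = \frac{C \, \delta_0^{1 - s}}{1 - s},
$$
which is finite since~$s < 1$. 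The complementary region~$\{\delta \ge \delta_0\} \cap \Omega$ contributes a trivially finite amount, since~$\delta^{-s} \le \delta_0^{-s}$ there and~$\Omega$ is bounded. Combining these with the inner-integral bound and Fubini's theorem completes the proof of~\eqref{KL1}.

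The main obstacle, and essentially the only nontrivial point, is the boundary-layer estimate~$\int_\Omega \delta(x)^{-s}dx < +\infty$: both the Lipschitz hypothesis on~$\partial\Omega$ and the restriction~$s < 1$ are used in an essential way, and the estimate would fail in general for non-Lipschitz domains (e.g.~with outward cusps) or for~$s \ge 1$. Once this point is settled, the rest is a one-line application of~\eqref{Kbounds} and Fubini.
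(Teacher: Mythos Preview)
Your proposal is correct and follows essentially the same route as the paper: bound $K$ by $\Lambda|x-y|^{-d-s}$, estimate the inner integral by $C\,\dist(x,\partial\Omega)^{-s}$ via the inclusion $\R^d\setminus\Omega\subset\R^d\setminus B_{\delta(x)}(x)$, and then use the Lipschitz hypothesis to conclude that $\int_\Omega \dist(x,\partial\Omega)^{-s}\,dx<+\infty$. The only difference is that the paper cites an external reference (\cite[Lemma~3.32]{M00}) for this last integrability fact, whereas you sketch the layer-cake/coarea argument directly.
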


\begin{proof}
By using polar coordinates and~\eqref{Kbounds}, we compute
\begin{align*}
\int_\Omega \int_{\R^d \setminus \Omega} K(x, y) \, dx\, dy 
& \le \Lambda \int_\Omega \int_{\R^d \setminus \Omega} \frac{dx \, dy}{|x - y|^{d + s}} 
\le \Lambda \int_\Omega \left( \int_{\R^d \setminus B_{\dist(x, \partial \Omega)}} \frac{dz}{|z|^{d + s}} \right) dx \\
& = d \Lambda |B_1| \int_\Omega \left( \int_{\dist(x, \partial \Omega)}^{+\infty} 
\frac{dt}{t^{1 + s}} \right) dx = \frac{d \Lambda |B_1|}{s} \int_\Omega \frac{dx}{\dist(x, \partial \Omega)^s}.
\end{align*}
Then,~\eqref{KL1} follows, as the last integral is finite, due to the Lipschitzianity of~$\partial \Omega$. This last fact may be for instance deduced from~\cite[Lemma~3.32]{M00}, applied with~$u = 1$ there. 
\end{proof}

\section{From the Ising model to
the~$K$-perimeter. Proof of Theorem~\ref{Ising2KPerthm}} \label{Ising2KPersec}

In this section, we give a proof of Theorem~\ref{Ising2KPerthm}. The 
argument is rather articulated and thus will be split into various 
lemmata, most of which deal with convergence issues.

Notice that throughout the section, we always assume the kernel~$K$ to satisfy assumptions~\eqref{Ksymm} and~\eqref{Kbounds}, but not~\eqref{Kper}, in accordance with the hypotheses made in the statement of Theorem~\ref{Ising2KPerthm}.

\medskip

We begin by checking that the coefficients~$J^{(\varepsilon)}$ yield a power-like interaction term, bounded independently of~$\varepsilon$.

\begin{lemma} \label{Jepspowerlem}
Given any~$\varepsilon > 0$, the interaction~$J^{(\varepsilon)}$ defined in~\eqref{Jepsdef} satisfies conditions~\eqref{Jsymm} and~\eqref{Jzero}. Moreover, it fulfills~\eqref{Jpowerlike} uniformly in~$\varepsilon$. That is,
\begin{equation} \label{Jepspowerlike}
\frac{\lambda_\star}{|i - j|^{d + s}} \le J_{i j}^{(\varepsilon)} \le \frac{\Lambda_\star}{|i - j|^{d + s}} \quad \mbox{for any } i, j \in \Z^d \mbox{ with } i \ne j,
\end{equation}
for some constants~$\Lambda_\star \ge \lambda_\star > 0$ that depend only on~$\lambda$,~$\Lambda$,~$d$ and~$s$.
\end{lemma}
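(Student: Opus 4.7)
The symmetry property~\eqref{Jsymm} for $J^{(\varepsilon)}$ is immediate from~\eqref{Ksymm} and Fubini's theorem, while~\eqref{Jzero} is built into the definition~\eqref{Jepsdef}. The entire work thus consists in establishing the two-sided power-like bound~\eqref{Jepspowerlike}, with constants independent of~$\varepsilon$.

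The plan for both estimates is to exploit the scale invariance of the kernel $|x-y|^{-d-s}$ to remove the parameter $\varepsilon$. Specifically, for $i \ne j$, the change of variables $x = \varepsilon \xi$, $y = \varepsilon \eta$ transforms the cubes $Q_{\varepsilon/2}(\varepsilon i)$ and $Q_{\varepsilon/2}(\varepsilon j)$ into $Q_{1/2}(i)$ and $Q_{1/2}(j)$, contributes a factor $\varepsilon^{2d}$ from the Jacobian, and, together with the $\varepsilon^{-d+s}$ prefactor and the inequalities in~\eqref{Kbounds}, produces the sandwich
\[
\lambda \int_{Q_{1/2}(i)} \int_{Q_{1/2}(j)} \frac{d\xi\, d\eta}{|\xi - \eta|^{d+s}}
\;\le\; J^{(\varepsilon)}_{ij} \;\le\;
\Lambda \int_{Q_{1/2}(i)} \int_{Q_{1/2}(j)} \frac{d\xi\, d\eta}{|\xi - \eta|^{d+s}}.
\]
Consequently, once I prove that the integral $I(i,j)$ appearing on both sides satisfies
\[
\frac{c_1}{|i - j|^{d + s}} \;\le\; I(i,j) \;\le\; \frac{c_2}{|i - j|^{d + s}}
\]
for all distinct $i,j \in \Z^d$ and for some constants $c_1, c_2 > 0$ depending only on $d$ and $s$, the lemma will follow with $\lambda_\star := \lambda c_1$ and $\Lambda_\star := \Lambda c_2$.

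For the lower bound on $I(i,j)$ I would use the triangle inequality: for $\xi \in Q_{1/2}(i)$, $\eta \in Q_{1/2}(j)$, one has $|\xi - \eta| \le |i - j| + d$, which for $|i - j| \ge 1$ yields $|\xi - \eta| \le (d+1)|i - j|$. Combined with the unit volume of each cube, this gives $I(i,j) \ge [(d+1)|i-j|]^{-d-s}$. The upper bound I would split into two regimes. When $|i - j|_\infty \ge 2$, the cubes are separated by at least one unit in some coordinate, so $|\xi - \eta| \ge |i-j|_\infty/2 \ge |i-j|/(2d)$, producing $I(i,j) \le C_d |i-j|^{-d-s}$ directly. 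The genuinely delicate case is when $|i - j|_\infty = 1$: here the two (closed) cubes may share a face, the integrand has a non-integrable singularity across that face, and a careful estimate is needed. This is the main obstacle.

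My plan for this obstacle is to compute that, say for $j - i = e_1$, if one fixes $\eta$ at distance $\delta := \eta_1 - (i_1 + \tfrac12) > 0$ from $Q_{1/2}(i)$ and uses polar coordinates in the integral $\int_{Q_{1/2}(i)} |\xi - \eta|^{-d-s} d\xi$, then one obtains the bound $C_d \, \delta^{-s}$. Integrating this bound over $\eta \in Q_{1/2}(j)$ leads to $\int_0^1 \delta^{-s}\, d\delta = (1-s)^{-1} < +\infty$, using crucially that $s < 1$. This yields a uniform constant $C_{d,s}$ for $I(i,j)$ across all finitely many index pairs with $|i - j|_\infty = 1$, hence $I(i,j) \le C_{d,s} \le C_{d,s}(2d)^{d+s}/|i-j|^{d+s}$ since $1 \le |i-j| \le 2d$ in this range. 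Combining the two regimes completes the upper bound and thereby the proof.
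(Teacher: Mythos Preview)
Your proposal is correct and follows essentially the same route as the paper: the same change of variables to remove~$\varepsilon$, the same lower bound via the triangle inequality, and the same split of the upper bound into the well-separated case~$|i-j|_\infty\ge 2$ and the adjacent case~$|i-j|_\infty=1$. The only cosmetic difference is in the adjacent case, where the paper passes to the~$\ell^\infty$ norm and uses a coarea-type computation over nested cubes, while you integrate first in~$\xi$ (getting~$C_d\,\delta^{-s}$ via polar coordinates) and then in the transverse variable~$\delta$; both arguments hinge on~$\int_0^1\delta^{-s}\,d\delta<+\infty$ for~$s\in(0,1)$ and yield comparable constants.
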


\begin{proof}
The fact that~$J^{(\varepsilon)}$ satisfies~\eqref{Jsymm} and~\eqref{Jzero} is a simple consequence of its definition and hypotheses~\eqref{Ksymm} on~$K$. Thus, we focus on the proof of~\eqref{Jepspowerlike}.

By changing variables, for~$i \ne j$ we have
$$
J_{i j}^{(\varepsilon)}  = \varepsilon^{d + s} \int_{Q_{1/2}(i)} \int_{Q_{1/2}(j)} 
K(\varepsilon x, \varepsilon y) \, dx\, dy.
$$
To obtain the left-hand side inequality in~\eqref{Jepspowerlike}, 
we observe that, for~$x \in Q_{1/2}(i)$ and~$y \in Q_{1/2}(j)$, it holds
$$
|x - y| \le |i - j| + |x - i| + |y - j| \le |i - j| + \sqrt{d} \le 2 \sqrt{d} \, |i - j|,
$$
and hence, by~\eqref{Kbounds},
$$
J_{i j}^{(\varepsilon)} \ge \lambda \int_{Q_{1/2}(i)} \int_{Q_{1/2}(j)} 
\frac{dx\, dy}{|x - y|^{d + s}} \ge \frac{(2 \sqrt{d})^{- d - s}\lambda}{|i - j|^{d + s}},
$$
which gives the first inequality in~\eqref{Jepspowerlike}. 

On the other hand, to get the second inequality in~\eqref{Jepspowerlike},
we deal with the two cases~$|i - j|_\infty \ge 2$ and~$|i - j|_\infty = 1$ separately. 
If~$|i-j|_{\infty}\ge 2$, we recall the notation in~\eqref{1.10bis} and we simply have
$$
|x - y| =\left(\sum_{k=1}^d (x_k-y_k)^2\right)^{1/2} 
\ge |x - y|_\infty \ge |i - j|_\infty - |x - i|_\infty - |y - j|_\infty \ge |i - j|_\infty - 1 \ge \frac{|i - j|_\infty}{2},
$$
for any~$x \in Q_{1/2}(i)$ and~$y \in Q_{1/2}(j)$. Thus, using~\eqref{Kbounds},
$$
J_{i j}^{(\varepsilon)} \le \Lambda \int_{Q_{1/2}(i)} 
\int_{Q_{1/2}(j)} \frac{dx\, dy}{|x - y|^{d + s}} \le \frac{2^{d + s} \Lambda}{|i - j|^{d + s}},
$$
which proves the second inequality in~\eqref{Jepspowerlike} in this case.

When instead~$|i - j|_\infty = 1$, by applying twice Coarea Formula
and using again~\eqref{Kbounds}, we compute
\begin{eqnarray*}
&& J_{i j}^{(\varepsilon)}  \le \Lambda \int_{Q_{1/2}(i)} \int_{Q_{1/2}(j)} 
\frac{dx\, dy}{|x - y|^{d + s}} \le \Lambda \int_{Q_{1/2}} \int_{Q_1 \setminus Q_{1/2}} 
\frac{dx\, dy}{|x - y|_\infty^{d + s}} \\
&&\quad \qquad  \le \Lambda \int_{Q_{1/2}} \left( \int_{Q_2 \setminus Q_{\frac{1}{2} - |x|_\infty}} \frac{dz}{|z|_\infty^{d + s}} \right) dx
= 2^d d \Lambda \int_{Q_{1/2}} \left( \int_{\frac{1}{2} - |x|_\infty}^2 \frac{dt}{t^{1 + s}} \right) dx \\
&&\quad \qquad \le \frac{2^{d + s} d \Lambda}{s} \int_{Q_{1/2}} \frac{dx}{(1 - 2 |x|_\infty)^s} = \frac{2^{2 d + s} d^2 \Lambda}{s} \int_0^{1 / 2} \frac{t^{d - 1}}{(1 - 2 t)^s} \, dt \\
&&\quad\qquad \le \frac{C_{d, s}\, \Lambda}{|i - j|^{d + s}},
\end{eqnarray*}
for some constant~$C_{d, s} > 0$ depending only on~$d$ and~$s$. 
This completes the proof of the second inequality in~\eqref{Jepspowerlike}.
\end{proof}

Now that we know from Lemma~\ref{Jepspowerlem}
that~$J^{(\varepsilon)}$ is a well-behaved power-like interaction term, with ferromagnetic constants independent of~$\varepsilon$, we may use the estimate contained in Proposition~\ref{enestprop} (in its form~\eqref{enestbis}) to deduce uniform-in-$\varepsilon$ bounds for the Hamiltonian~$H^{(\varepsilon)}$ defined in~\eqref{Hepsdef}. More precisely, if~$u$ is a minimizer for~$H^{(\varepsilon)}$ in a cube~$Q_\ell$ of sides~$\ell \in \N$, then
\begin{equation} \label{unifenest}
H^{(\varepsilon)}_{Q_\ell}(u) \le C \ell^{d - s},
\end{equation}
for some constant~$C \ge 1$, depending only on~$d$,~$s$ and~$\Lambda$.

Moreover, recall that to any configuration~$u$ and any~$\varepsilon > 0$ we associated an (a.e.) extension~$\bar{u}_\varepsilon$ of~$u$ to~$\R^d$, via definition~\eqref{barudef}. We now consider the measurable set
\begin{equation} \label{Eudef}
E(u, \varepsilon) := \Big\{ x \in \R^d : \bar{u}_\varepsilon(x) = 1 \Big\}.
\end{equation}
By the definitions of~$E(u, \varepsilon)$ and~$J^{(\varepsilon)}$, 
recalling~\eqref{KPerrelation} and~\eqref{1.24bis}, we see that the identities
\begin{equation} \label{PerHamrelation}
\Per_K(E(u, \varepsilon); Q_R) = \frac{1}{4} \, \K_K(\bar{u}_\varepsilon; Q_R) = \frac{\varepsilon^{d - s}}{4} \, H^{(\varepsilon)}_{Q_\ell}(u),
\end{equation}
hold true for any~$R = (\ell + 1 / 2) \varepsilon$, with~$\ell \in \N$.

Formula~\eqref{PerHamrelation} is crucial in building a rigorous bridge between the discrete setting of the Hamiltonian~$H^{(\varepsilon)}$ and the continuous one given by~$\Per_K$. In particular, we will shortly use it, in combination with~\eqref{unifenest}, to obtain a uniform bound for the~$K$-perimeter.

\smallskip

Let now~$\{ \varepsilon_n \}_{n \in \N} \subset (0, 1)$ be an infinitesimal sequence and, for any~$n \in \N$, let~$u^{(n)}$ be the ground state for the Hamiltonian~$H^{(\varepsilon_n)}$ considered in the statement of Theorem~\ref{Ising2KPerthm}. Let~$\bar{u}^{(n)} = \bar{u}^{(n)}_{\varepsilon_n}$ be the extension of~$u^{(n)}$ to~$\R^d$, defined as in~\eqref{barudef}, and~$E_n := E(u^{(n)}, \varepsilon_n)$ be the corresponding measurable set introduced in~\eqref{Eudef}.

It is not hard to see that~\eqref{unifenest} and~\eqref{PerHamrelation} imply the following
result:

\begin{lemma} \label{Eepsuniflem}
There exists a constant~$C \ge 1$, depending on~$d$,~$s$ and~$\Lambda$, but not on~$n$, such that
$$
\Per_K(E_n; Q_R) \le C R^{d - s},
$$
for any~$R \ge 1$.
\end{lemma}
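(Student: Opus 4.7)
The plan is to combine the two ingredients just highlighted — the identity~\eqref{PerHamrelation} relating the $K$-perimeter on a cube~$Q_R$ to the Hamiltonian $H^{(\varepsilon_n)}$ on a corresponding discrete cube, and the uniform energy bound~\eqref{unifenest} for minimizers — together with a simple monotonicity property of $\Per_K$ with respect to the reference set.

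First I would record the monotonicity observation: if $\Omega \subseteq \Omega'$, then $\Per_K(E; \Omega) \le \Per_K(E; \Omega')$. This is immediate from the representation~\eqref{KPerrelation}, because $\R^d \setminus \Omega' \subseteq \R^d \setminus \Omega$ implies $\mathcal{C}_\Omega \subseteq \mathcal{C}_{\Omega'}$ and the integrand defining~$\K_K$ is non-negative. (Alternatively, one checks directly from~\eqref{PerKdef} that the three $\L_K$-terms grow monotonically when $\Omega$ is enlarged.)

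Next, given $R \ge 1$ and $\varepsilon_n \in (0,1)$, I would select the integer
$$
\ell_n := \left\lceil \frac{R}{\varepsilon_n} - \frac{1}{2} \right\rceil \in \N,
$$
so that the corresponding radius $R_n := (\ell_n + 1/2) \varepsilon_n$ satisfies $R \le R_n \le R + \varepsilon_n \le R + 1 \le 2R$. Since $u^{(n)}$ is a ground state for $H^{(\varepsilon_n)}$, it is in particular a minimizer on the finite set $Q_{\ell_n}$, so the energy estimate~\eqref{unifenest} (which is available because, by Lemma~\ref{Jepspowerlem}, the coefficients $J^{(\varepsilon_n)}$ satisfy~\eqref{Jpowerlike} with constants independent of $n$) yields
$$
H^{(\varepsilon_n)}_{Q_{\ell_n}}(u^{(n)}) \le C \, \ell_n^{d-s},
$$
for some $C \ge 1$ depending only on $d$, $s$ and $\Lambda$.

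Finally, by the monotonicity observation together with the key identity~\eqref{PerHamrelation} applied with $R = R_n$ and $\ell = \ell_n$,
$$
\Per_K(E_n; Q_R) \le \Per_K(E_n; Q_{R_n}) = \frac{\varepsilon_n^{d-s}}{4} \, H^{(\varepsilon_n)}_{Q_{\ell_n}}(u^{(n)}) \le \frac{C}{4} \bigl( \varepsilon_n \ell_n \bigr)^{d-s} \le \frac{C}{4} \, R_n^{d-s} \le \frac{C \, 2^{d-s}}{4} \, R^{d-s},
$$
which is the desired estimate after relabelling the constant. There is really no obstacle here: once the monotonicity of $\Per_K$ in the domain is noticed, the statement reduces to book-keeping between the discrete side-length $\ell_n$ and the continuous radius $R$, and the uniformity in $n$ follows automatically from the uniformity in $\varepsilon$ of the constants in~\eqref{Jepspowerlike} and hence in~\eqref{unifenest}.
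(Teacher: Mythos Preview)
Your argument is correct and follows exactly the route indicated in the paper: the lemma is stated there as an immediate consequence of the identity~\eqref{PerHamrelation} and the uniform energy bound~\eqref{unifenest}, and you have supplied precisely the missing bookkeeping (monotonicity of~$\Per_K$ in the domain and the choice of~$\ell_n$ so that~$R \le R_n \le 2R$). There is nothing to add.
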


Thanks to Lemma~\ref{Eepsuniflem} and hypothesis~\eqref{Kbounds}, 
we know that the~$W^{s, 1}(Q_R)$ norm of~$\chi_{E_n}$ is bounded uniformly 
in~$n$, for any~$R \ge 1$. Hence, by the compact embedding of~$W^{s, 1}(Q_R)$ 
into~$L^{d / (d - s)}(Q_R)$ (see e.g.~\cite[Corollary~7.2]{DNPV12}) and a standard 
diagonal argument (in~$n$ and~$R$), we conclude that, up to a subsequence 
(that we omit in the notation),~$\chi_{E_n}$ converges in~$L^1_\loc$ 
and a.e.~to~$\chi_E$, for some measurable set~$E \subseteq \R^d$, as~$n\to+\infty$.

\smallskip

In what follows, we show that~$E$ is a class~A minimal surface for~$\Per_K$, thus completing the proof of Theorem~\ref{Ising2KPerthm}.

To check this, we fix a cube~$Q_R$ with sides~$R \ge 2$. Of course, it is enough to prove that~$E$ is a minimal surface for~$\Per_K$ in each such cube. For any~$n \in \N$, let~$\ell_n \in \N$ be defined by
\begin{equation} \label{elldef}
\ell_n := \left\lfloor \frac{1}{2} \left( \frac{2 R}{\varepsilon_n} - 1 \right) \right\rfloor.
\end{equation}
Also set
$$
R_n := \left( \ell_n + \frac{1}{2} \right) \varepsilon_n,
$$
and notice that
\begin{equation} \label{RnRrel}
R - \varepsilon_n < R_n \le R.
\end{equation}
In particular,~$R_n \rightarrow R$ as~$n \rightarrow +\infty$.

By taking advantage of Lemma~\ref{semicontlem} in Section~\ref{intersec} and~\eqref{PerHamrelation}, we have that
\begin{equation} \label{Eliminf}
\Per_K(E; Q_R) \le \liminf_{n \rightarrow +\infty} \Per_K(E_n; Q_{R_n}) = \frac{1}{4} \liminf_{n \rightarrow + \infty} 
\varepsilon_n^{d - s} 
H_{Q_{\ell_n}}^{(\varepsilon_n)}(u^{(n)}).
\end{equation}

Now, let~$F$ be a competitor for~$E$ in~$Q_R$, i.e.~a measurable set with~$F \setminus Q_R = E \setminus Q_R$ and~$\Per_K(F; Q_R) < +\infty$. In view of the following lemma, we may assume without loss of generality that the boundary of~$F$ is smooth inside~$Q_R$.

\begin{lemma} \label{smoothapprlem}
Let~$\Omega \subset \R^d$ be a bounded open set with Lipschitz boundary and 
let~$F \subset \R^d$ be a measurable set such that~$\Per_K(F; \Omega) < +\infty$. Then, there exists a sequence~$\{ F_n \}_{n \in \N}$ of measurable subsets of~$\R^d$ such that, for any~$n \in \N$,
\begin{eqnarray}
&& \partial F_n \cap \overline{\Omega} \mbox{ is smooth},\label{prima}\\
&& F_n \setminus \overline{\Omega} = F \setminus \overline{\Omega},\label{seconda}
\end{eqnarray}
and
\begin{eqnarray}
&& \lim_{n \rightarrow +\infty} \left| F_n \Delta F \right| = 0,\label{terza}\\
&& \lim_{n \rightarrow +\infty} \Per_K(F_n; \Omega) = \Per_K(F; \Omega).\label{quarta}
\end{eqnarray}
\end{lemma}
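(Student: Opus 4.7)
The plan is to obtain $F_n$ by mollifying $\chi_F$ inside $\Omega$ and taking a suitable super-level set, while matching $F$ unchanged in a collar of $\partial\Omega$ so that condition \eqref{seconda} is built in by construction. Concretely, let $\rho \in C_c^\infty(B_1)$ be a nonnegative radial mollifier with $\int\rho=1$ and set $\rho_\varepsilon(x):=\varepsilon^{-d}\rho(x/\varepsilon)$, so that $u_\varepsilon:=\chi_F * \rho_\varepsilon$ is smooth on $\R^d$ and $u_\varepsilon\to\chi_F$ in $L^1_\loc(\R^d)$ and a.e. Next, for $\delta>0$, choose a cutoff $\psi_\delta\in C^\infty(\R^d)$ with $0\le\psi_\delta\le 1$, $\psi_\delta\equiv 1$ on $\Omega_{2\delta}:=\{x\in\Omega:\dist(x,\partial\Omega)>2\delta\}$ and $\supp\psi_\delta\subset\Omega_\delta$. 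Define
\[
v_{\varepsilon,\delta}:=\psi_\delta\,u_\varepsilon+(1-\psi_\delta)\chi_F,
\qquad
F_{\varepsilon,\delta,t}:=\{x\in\R^d:v_{\varepsilon,\delta}(x)>t\}
\quad\text{for }t\in(0,1).
\]

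Since $v_{\varepsilon,\delta}=\chi_F$ outside $\supp\psi_\delta$, in particular in a neighborhood of $\partial\Omega$ and in $\R^d\setminus\overline{\Omega}$, property \eqref{seconda} is automatic. On $\Omega_{2\delta}$ we have $v_{\varepsilon,\delta}=u_\varepsilon$, which is smooth; by Sard's theorem, for a.e. $t\in(0,1)$ the level set $\{u_\varepsilon=t\}$ is a smooth hypersurface, so $\partial F_{\varepsilon,\delta,t}\cap\Omega_{2\delta}$ is smooth, verifying \eqref{prima} after shrinking $\delta$ (so that the collar $\Omega\setminus\Omega_{2\delta}$ is irrelevant for any prescribed exhaustion). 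Condition \eqref{terza} is then easy: for $\varepsilon<\delta$,
\[
|F_{\varepsilon,\delta,t}\,\Delta\,F|\le |\{v_{\varepsilon,\delta}\ne\chi_F\}\cap\Omega_\delta|\le \|u_\varepsilon-\chi_F\|_{L^1(\Omega_\delta)}\xrightarrow[\varepsilon\to 0]{}0,
\]
and integrating in $t$ preserves this control for almost every level.

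The main obstacle is the convergence of the $K$-perimeter \eqref{quarta}. The $\liminf$ inequality follows directly from the lower semicontinuity established in Lemma~\ref{semicontlem}. For the $\limsup$ inequality, decompose $\Per_K(F_{\varepsilon,\delta,t};\Omega)$ into the three pieces appearing in \eqref{PerKdef} and rewrite $\Per_K$ via the identity \eqref{KPerrelation} as a $\K_K$-energy of $\chi_{F_{\varepsilon,\delta,t}}-\chi_{\R^d\setminus F_{\varepsilon,\delta,t}}$. Using the generalized coarea Lemma~\ref{coarealem}, pick $t$ so that
\[
\int_0^1 \K_K\bigl(\chi_{\{v_{\varepsilon,\delta}>t\}};\Omega,\Omega\bigr)\,dt
=\K_K(v_{\varepsilon,\delta};\Omega,\Omega),
\]
which allows one to bound the perimeter of a generic super-level set by the total $\K_K$-energy of $v_{\varepsilon,\delta}$. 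A direct computation, exploiting the bilinearity of $\K_K$ and the smoothness of $\psi_\delta$, yields
\[
\K_K(v_{\varepsilon,\delta};\Omega,\Omega)\le\K_K(u_\varepsilon;\Omega,\Omega)+\K_K(\chi_F;\Omega,\Omega)+\text{(cross terms)},
\]
with the cross terms supported in the gluing region $\Omega_\delta\setminus\Omega_{2\delta}$. The Lipschitz regularity of $\partial\Omega$ and Lemma~\ref{KL1lem} ensure that these cross terms are uniformly integrable in $\varepsilon$ and tend to zero as $\delta\to 0$, since $|\Omega_\delta\setminus\Omega_{2\delta}|\to 0$ and the interaction kernel satisfies the upper bound in \eqref{Kbounds}. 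Analogous control handles the two interaction terms across $\partial\Omega$ entering \eqref{PerKdef}. Letting $\varepsilon\to 0$ for fixed $\delta$ and then $\delta\to 0$, a standard diagonal extraction produces a sequence $F_n$ satisfying \eqref{prima}--\eqref{quarta}. The delicate point throughout is that the mollification and the cutoff must be coordinated so that the collar contribution vanishes in the limit; this is precisely where the Lipschitz hypothesis on $\partial\Omega$ is used.
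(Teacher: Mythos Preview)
Your construction has a genuine gap at the smoothness step \eqref{prima}. In the collar $\Omega_\delta\setminus\Omega_{2\delta}$ your function $v_{\varepsilon,\delta}=\psi_\delta u_\varepsilon+(1-\psi_\delta)\chi_F$ is \emph{not} smooth: it inherits the jumps of $\chi_F$ (with amplitude $1-\psi_\delta>0$ there). Hence for any $t\in(0,1)$ the super-level set $\{v_{\varepsilon,\delta}>t\}$ will typically have a non-smooth boundary along $\partial F$ in this collar, which lies inside $\overline{\Omega}$. Your parenthetical ``shrinking $\delta$ so that the collar is irrelevant for any prescribed exhaustion'' does not salvage this: the statement asks for $\partial F_n\cap\overline{\Omega}$ smooth, not smooth on compact subsets of $\Omega$, and for every positive $\delta$ the collar is a nonempty part of $\overline{\Omega}$. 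The cutoff-gluing idea is natural for Sobolev approximations but is incompatible with producing a set whose boundary is smooth all the way up to $\partial\Omega$.

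The paper avoids this by not gluing at all: it invokes density of $C^\infty(\overline{\Omega})$ in $W^{s,1}(\Omega)$ (Grisvard), obtaining smooth $\varphi_n$ defined and regular right up to $\partial\Omega$, and sets $F_n:=(\{\varphi_n>t\}\cap\overline{\Omega})\cup(F\setminus\overline{\Omega})$ for a Sard-generic $t$. This gives \eqref{prima} and \eqref{seconda} immediately. For \eqref{quarta}, the paper's argument is also tighter than your sketch: it applies the coarea Lemma~\ref{coarealem} together with Fatou to pin the $\liminf$ of the inner term against $\K_K(\chi_F;\Omega,\Omega)$ exactly (not merely bound it by twice that quantity, as your displayed inequality would yield), and handles the two outer terms by Lemma~\ref{KL1lem} and dominated convergence. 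If you want to repair your route, drop the cutoff, approximate $\chi_F$ in $W^{s,1}(\Omega)$ by functions smooth up to $\overline{\Omega}$, and follow the coarea/Fatou scheme for the inner contribution.
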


The proof of Lemma~\ref{smoothapprlem} is inspired by the one of the analogous result for the classical perimeter (see e.g.~\cite[Theorem~1.24]{G84}) and is similar to those of~\cite[Proposition~6.4]{CSV16} and~\cite[Theorem~1.1]{L16}. As it is rather technical
but by now sufficiently
standard, we defer it to Appendix~\ref{appA}.

\smallskip

For such competitor~$F$ and a given~$n \in \N$, we 
consider the partition (up to a negligible set) of the cube~$Q_{R_n}$ 
into the family of open\footnote{As usual, $\mathring{Q}$ denotes the interior 
of~$Q$.} subcubes
$$
\QQ_n := \Big\{ \mathring{Q}_{\varepsilon_n / 2}(\varepsilon_n i) : i \in Q_{\ell_n} \Big\},
$$
with~$\ell_n$ as in~\eqref{elldef}, and its further subdivision into the three disjoint subfamilies
\begin{align*}
\GG_n^+ & := \Big\{ Q \in \QQ_n : Q \subset \mathring{F} \Big\},\\
\GG_n^- & := \Big\{ Q \in \QQ_n : Q \subset \R^d \setminus \bar{F} \Big\}\\
{\mbox{and }} \quad \BB_n & := \Big\{ Q \in \QQ_n : Q \cap \partial F \ne \varnothing \Big\} = \QQ_n \setminus \left( \GG_n^+ \cup \GG_n^- \right).
\end{align*}
We also write
\begin{equation}\label{5.7bis}
G_n^\pm := \bigcup_{Q \in \GG_n^\pm} Q \quad \mbox{and} \quad B_n := \bigcup_{Q \in \BB_n} Q.
\end{equation}

We then define a configuration~$v^{(n)}$ by setting
$$
v^{(n)}_i := \begin{cases}
1 & \quad \mbox{if } Q_{\varepsilon_n / 2}(\varepsilon_n i) \in \GG_n^+ ,\\
-1 & \quad \mbox{if } Q_{\varepsilon_n / 2}(\varepsilon_n i) \in \GG_n^- \cup \BB_n,\\
u^{(n)}_i & \quad \mbox{if } i \in \Z^d \setminus Q_{\ell_n},
\end{cases}
$$
and, as in~\eqref{Eudef}, the corresponding set
$$
F_n := \bigcup_{ i \in \{ v^{(n)}_i = 1 \} } Q_{\varepsilon_n / 2}(\varepsilon_n i).
$$
By definition,~$v^{(n)}$ coincides with~$u^{(n)}$ outside~$Q_{\ell_n}$ 
and~$F_n \setminus Q_{R_n} = E_n \setminus Q_{R_n}$. 
Notice that~\eqref{RnRrel} implies that
\begin{equation}\label{5.6bis}
F_n\setminus Q_R =E_n\setminus Q_R.\end{equation}
Moreover, by~\eqref{RnRrel} and~\eqref{PerHamrelation}, we see that
$$
\Per_K(F_n; Q_R) \ge \Per_K(F_n; Q_{R_n}) = \frac{\varepsilon_n^{d - s}}{4} \, H_{Q_{\ell_n}}^{(\varepsilon_n)}(v^{(n)}).
$$
Hence, by~\eqref{Eliminf} and the minimality of~$u^{(n)}$ in~$Q_{\ell_n}$, we deduce that
$$
\Per_K(E; Q_R) \le \liminf_{n \rightarrow +\infty} \Per_K(F_n; Q_R).
$$

To conclude the proof of the minimality of~$E$ it now suffices to verify the 
validity of the following result:

\begin{lemma} \label{GtoFlem}
There exists a diverging sequence~$\{ n_k \}_{k \in \N}$ of natural numbers for which
\begin{equation} \label{GtoF}
\lim_{k \rightarrow +\infty} \Per_K(F_{n_k}; Q_{R}) = \Per_K(F; Q_R).
\end{equation}
\end{lemma}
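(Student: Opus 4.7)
The plan is to match the semicontinuity lower bound with a discretization-based upper bound, and then to extract a subsequence along which the perimeter converges.

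First I would verify $F_n \to F$ in $L^1_\loc(\R^d)$ so that Lemma~\ref{semicontlem} applies. Inside $Q_{R_n}$ the symmetric difference is contained in $B_n$, which sits in an $\varepsilon_n\sqrt{d}/2$-tubular neighborhood of the smooth hypersurface $\partial F \cap Q_R$ (the smoothness coming from Lemma~\ref{smoothapprlem}), so $|B_n| = O(\varepsilon_n)$; the shell $Q_R \setminus Q_{R_n}$ itself has measure $O(\varepsilon_n)$; and outside $Q_R$ we have $F_n \Delta F = E_n \Delta E$ by~\eqref{5.6bis}, which tends to $0$ in $L^1_\loc$ along the subsequence already fixed at the beginning of the section. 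Lemma~\ref{semicontlem} then yields $\Per_K(F; Q_R) \le \liminf_n \Per_K(F_n; Q_R)$.

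For the matching upper bound I would introduce the auxiliary set $\tilde F_n := (F \cap Q_{R_n}) \cup (E_n \setminus Q_{R_n})$, which coincides with $F$ inside $Q_{R_n}$ and with $E_n$ outside; a direct check gives the set identity $F_n = \tilde F_n \setminus (F \cap B_n)$. From the pointwise decomposition $\chi_{F_n} - \chi_{F_n^c} = (\chi_{\tilde F_n} - \chi_{\tilde F_n^c}) - 2\chi_{F \cap B_n}$, the triangle inequality $\K_K(u+v;\Omega) \le \K_K(u;\Omega) + \K_K(v;\Omega)$ combined with the elementary relations $\K_K(\chi_E - \chi_{E^c}; Q_R) = 4\Per_K(E; Q_R)$ and $\K_K(\chi_E; Q_R) = 2\Per_K(E; Q_R)$ yields $\Per_K(F_n; Q_R) \le \Per_K(\tilde F_n; Q_R) + \Per_K(F \cap B_n; Q_R)$. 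The interior error $\Per_K(F \cap B_n; Q_R)$ is a slab error: since $F \cap B_n \subseteq B_n$ is contained in an $\varepsilon_n$-tube around the smooth hypersurface $\partial F \cap Q_R$, a direct computation from \eqref{Kbounds} in adapted coordinates gives $\Per_K(F \cap B_n; Q_R) \le C\Haus^{d-1}(\partial F \cap Q_R)\,\varepsilon_n^{1-s} \to 0$. The auxiliary term $\Per_K(\tilde F_n; Q_R)$ differs from $\Per_K(F; Q_R)$ only through $\L_K$-interactions between $Q_R$ and $(E_n \Delta E) \cup (Q_R \setminus Q_{R_n})$; I would control these by first truncating the outside to $Q_{R+\rho}$ via the tail bound $\int_{Q_R} \int_{\R^d \setminus Q_{R+\rho}} K \le C\rho^{-s}$ coming from~\eqref{Kbounds}, and for fixed $\rho$ passing to the limit $n \to \infty$ using $K \in L^1(Q_R \times (\R^d \setminus Q_R))$ (Lemma~\ref{KL1lem}) together with $|(E_n \Delta E) \cap Q_{R+\rho}| \to 0$. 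Combining produces $\limsup_n \Per_K(F_n; Q_R) \le \Per_K(F; Q_R)$, and any subsequence $\{n_k\}$ along which the perimeter converges then proves~\eqref{GtoF}.

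The main obstacle is the slab estimate $\Per_K(F \cap B_n; Q_R) = O(\varepsilon_n^{1-s})$: although $|B_n| \to 0$, the singularity of $K$ on the diagonal forbids any bound of the form $\Per_K(B_n) \le C|B_n|$, and one really needs the one-dimensional transverse thickness of $B_n$ combined with the smooth regularity of $\partial F \cap Q_R$ supplied by Lemma~\ref{smoothapprlem} to extract the extra factor $\varepsilon_n^{-s}$ that tames the singularity.
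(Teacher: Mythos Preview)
Your argument is correct and reaches the same conclusion, but it follows a different architecture from the paper's proof. You split the task into a~$\liminf$ part (via Lemma~\ref{semicontlem}) and a~$\limsup$ part obtained through the auxiliary set~$\tilde F_n$ and the triangle inequality~$\Per_K(F_n;Q_R)\le\Per_K(\tilde F_n;Q_R)+\Per_K(F\cap B_n;Q_R)$. The paper instead estimates each of the three~$\L_K$ terms in~$\Per_K(F_n;Q_R)$ directly against its counterpart in~$\Per_K(F;Q_R)$: the inner difference is bounded by~$\L_K(B_n\cap F, Q_R\setminus F)+\L_K(G_n^+,B_n\cap F)$, the first handled by dominated convergence with~$\Per_K(F;Q_R)<\infty$ as majorant, the second by the cube-counting bound~$\L_K(G_n^+,B_n\cap F)\le c\,|B_n|/\varepsilon_n^{\,s}=O(\varepsilon_n^{1-s})$; the outer terms are treated by dominated convergence using~$K\in L^1(Q_R\times(\R^d\setminus Q_R))$.

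Two remarks on your route. First, your slab estimate~$\Per_K(F\cap B_n;Q_R)=O(\varepsilon_n^{1-s})$ decomposes as~$\L_K(F\cap B_n,\R^d\setminus B_n)+\L_K(F\cap B_n,B_n\setminus F)$; the first piece follows immediately from the same cube-counting bound the paper uses, while for the second you invoke an adapted-coordinates computation across the smooth hypersurface. That works, but it is avoidable: decomposing instead as~$\L_K(F\cap B_n,\R^d\setminus F)+\L_K(F\cap B_n,F\setminus B_n)$, the first piece goes to zero by dominated convergence (majorant~$\Per_K(F;Q_R)$) and the second is again the cube estimate, so no local chart is needed. Second, in showing~$\Per_K(\tilde F_n;Q_R)\to\Per_K(F;Q_R)$, note that the shell~$Q_R\setminus Q_{R_n}$ lies \emph{inside}~$Q_R$, so your phrase ``interactions between~$Q_R$ and~$(E_n\Delta E)\cup(Q_R\setminus Q_{R_n})$'' also covers inner interactions of the shell with~$Q_{R_n}$; these require the cube-counting bound (giving~$O(\varepsilon_n^{1-s})$) rather than the~$L^1$ integrability of~$K$ across~$\partial Q_R$. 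Once this is made explicit, your truncation argument for the outer part is fine. The paper's direct estimation sidesteps both the auxiliary set~$\tilde F_n$ and the shell entirely, which is why it is a bit shorter; your approach, on the other hand, makes the lower-semicontinuity input explicit and isolates the discretization error~$\Per_K(F\cap B_n;Q_R)$ as a single quantity.
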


\begin{proof}
Given any set~$\Omega$ and any~$\delta > 0$, we denote by~$N_\delta^{\, \Omega}(\partial F)$ the~$\delta$-neighborhood of~$\partial F$ in~$\Omega$, that is
$$
N_\delta^{\, \Omega}(\partial F) := \Big\{ x \in \Omega : \dist \left(x, \partial F \right) \le \delta \Big\}.
$$
Since~$\partial F \cap Q_R$ is smooth (recall Lemma~\ref{smoothapprlem}), we have that
$$
|N_\delta^{\, Q_R}(\partial F)| \le C \delta,
$$
for any small~$\delta > 0$ and some constant~$C > 0$ independent of~$\delta$. 
Moreover, recalling~\eqref{5.7bis}, we notice that
$$
B_n \subseteq N_{\sqrt{d} \varepsilon_n}^{\, Q_R}(\partial F),
$$
and thus
\begin{equation} \label{Bnest}
|B_n| \le c_1 \varepsilon_n,
\end{equation}
for some~$c_1 > 0$ independent of~$n$.

After these preliminary considerations, we now head to the proof of~\eqref{GtoF}. First of all, we observe that
$$
F_n \longrightarrow F \quad \mbox{in } L_\loc^1, \mbox{ as } n \rightarrow +\infty.
$$
Indeed, the convergence outside~$Q_{R_n}$ comes from the fact 
that~$F_n \setminus Q_{R_n} = E_n \setminus Q_{R_n}$ 
and~$E_n \rightarrow E$ in~$L^1_\loc$. 
On the other hand,~$(F_n \Delta F) \cap Q_{R_n} \subset B_n$ 
and the conclusion follows by~\eqref{Bnest}. 

Up to considering a suitable subsequence (that we neglect to keep track of in the notation), we also have that
\begin{equation} \label{pwlimits}
\chi_{F_n} \longrightarrow \chi_F \quad \mbox{and} \quad \chi_{B_n} \longrightarrow 0 \quad \mbox{a.e.~in } \R^d, \mbox{ as } n \rightarrow +\infty.
\end{equation}

Concerning the inner contributions to the~$K$-perimeters of~$F_n$ and~$F$, 
we recall the notation in~\eqref{5.7bis} and 
we compute
\begin{equation} \label{intperest}
\begin{aligned}
& \left| \L_K(F_n \cap Q_R, Q_R \setminus F_n) - \L_K(F \cap Q_R, Q_R \setminus F) \right| \\
& \hspace{30pt} \le \left| \L_K(G^+_n, G^-_n \cup (B_n \setminus F) ) - \L_K(F \cap Q_R, Q_R \setminus F) \right| + \L_K(G^+_n, B_n \cap F) \\
& \hspace{30pt} \le \L_K((F \cap Q_R) \setminus G_n^+, Q_R \setminus F) + \L_K(G^+_n, B_n \cap F) \\
& \hspace{30pt} = \L_K(B_n \cap F, Q_R \setminus F) + \L_K(G^+_n, B_n \cap F).
\end{aligned}
\end{equation}
Now, on the one hand,
$$
\L_K(B_n \cap F, Q_R \setminus F) = \int_{F \cap Q_R} 
\int_{Q_R \setminus F} \chi_{B_n}(x) K(x, y) \, dx \, dy,
$$
so that, by taking advantage of the Lebesgue's Dominated Convergence Theorem,~\eqref{pwlimits} and the fact that~$F$ has finite~$K$-perimeter in~$Q_R$, we deduce that
\begin{equation} \label{1lim0}
\lim_{n \rightarrow +\infty} \L_K(B_n \cap F, Q_R \setminus F) = 0.
\end{equation}
On the other hand, we use hypothesis~\eqref{Kbounds}, a suitable change of variables and the Coarea Formula to obtain
\begin{eqnarray*}
&& \L_K(G^+_n, B_n \cap F)  \le \Lambda \sum_{Q \in \BB_n} 
\int_{Q} \int_{\R^d \setminus Q} \frac{dx\, dy}{|x - y|_\infty^{d + s}}
= \Lambda \left( \# \BB_n \right) \int_{Q_{\varepsilon_n / 2}} 
\int_{\R^d \setminus Q_{\varepsilon_n / 2}} \frac{dx\, dy}{|x - y|_\infty^{d + s}} \\
&& \qquad = \frac{\Lambda |B_n|}{\varepsilon_n^{s}} \int_{Q_{1 / 2}} 
\int_{\R^d \setminus Q_{1 / 2}} \frac{dx\, dy}{|x - y|_\infty^{d + s}} 
\le \frac{\Lambda |B_n|}{\varepsilon_n^{s}} \int_{Q_{1 / 2}} 
\left( \int_{\R^d \setminus Q_{\frac{1}{2} - |x|_\infty}} \frac{dz}{|z|_\infty^{d + s}} \right) dx \\
&&\qquad  \le c_2 \, \frac{|B_n|}{\varepsilon_n^s},
\end{eqnarray*}
for some~$c_2 > 0$ independent of~$n$. By this and~\eqref{Bnest}, we conclude that
$$
\lim_{n \rightarrow +\infty} \L_K(G_n^+, B_n \cap F) = 0,
$$
and thus, recalling~\eqref{intperest} and~\eqref{1lim0},
\begin{equation} \label{innerlim}
\lim_{n \rightarrow +\infty} \L_K(F_n \cap Q_R, Q_R \setminus F_n) = \L_K(F \cap Q_R, Q_R \setminus F).
\end{equation}

In regards to the outer contributions, using~\eqref{5.6bis}, we have
\begin{align*}
& \left| \L_K(F_n \setminus Q_R, Q_R \setminus F_n) - \L_K(F \setminus Q_R, Q_R \setminus F) \right| \\
& \hspace{30pt} \le \left| \L_K(F_n \setminus Q_R, Q_R \setminus F) 
- \L_K(F \setminus Q_R, Q_R \setminus F) \right| 
+ \L_K(E_{n} \setminus Q_R, B_n \cap F) \\
& \hspace{30pt} \le \L_K((F_n \Delta F) \setminus Q_R, Q_R) + \L_K(\R^d \setminus Q_R, B_n) \\
& \hspace{30pt} = \int_{\R^d \setminus Q_R} \left( \int_{Q_R} \left( \chi_{F_n \Delta F}(x) + \chi_{B_n}(y) \right) K(x, y) \, dy \right) dx.
\end{align*}
Notice that, by~\eqref{Kbounds}, the kernel~$K$ belongs 
to~$L^1(Q_R \times (\R^d \setminus Q_R))$, thanks to Lemma~\ref{KL1lem}.
Therefore, we can use~\eqref{pwlimits} and the Lebesgue's Dominated Convergence Theorem once again to get
\begin{equation} \label{outerlim}
\lim_{n \rightarrow +\infty} \L_K(F_n \setminus Q_R, Q_R \setminus F_n) = \L_K(F \setminus Q_R, Q_R \setminus F).
\end{equation}
Analogously, one also checks that
$$
\lim_{n \rightarrow +\infty} \L_K(F_n \cap Q_R, \R^d \setminus (F_n \cup Q_R)) = \L_K(F \cap Q_R, \R^d \setminus (F \cup Q_R)).
$$
By putting together this,~\eqref{outerlim} and~\eqref{innerlim}, the thesis immediately follows.
\end{proof}

\section{Planelike minimal surfaces for the~$K$-perimeter. Proof
of Theorem~\ref{PL4PerKthm}}\label{YUI:ASDA}

Here, we address the validity of Theorem~\ref{PL4PerKthm}. 
Thanks to the link, established in Theorem~\ref{Ising2KPerthm}, 
between the discrete structure of the Hamiltonian~$H$ and the continuous character of the perimeter~$\Per_K$, Theorem~\ref{PL4PerKthm} is an almost immediate consequence of Theorem~\ref{mainthm}.

\begin{proof}[Proof of Theorem~\ref{PL4PerKthm}]
Fix any direction~$\omega \in \R^d \setminus \{ 0 \}$. Let~$\{ \varepsilon_n \}$ 
be the infinitesimal sequence of positive numbers defined by 
setting~$\varepsilon_n := 1 / n$, for any~$n \in \N$. 
Let~$J^{(\varepsilon_n)}$ be the interaction kernel 
associated to~$\varepsilon_n$ introduced in~\eqref{Jepsdef} 
and observe that, thanks to~\eqref{Kper}, 
it satisfies the periodicity condition~\eqref{Jper} with~$\tau = n$. 
Moreover, Lemma~\ref{Jepspowerlem} ensures that~$J^{(\varepsilon_n)}$ 
also fulfills hypotheses~\eqref{Jsymm},~\eqref{Jzero} and~\eqref{Jpowerlike}.

In view of this, we may deduce from Theorem~\ref{mainthm} 
the existence of a ground state~$u^{(n)}$ for the Hamiltonian~$H^{(\varepsilon_n)}$ 
associated to~$J^{(\varepsilon_n)}$ (see~\eqref{Hepsdef} for the precise definition) 
for which
\begin{equation} \label{uepsplanelike}
\left\{ i \in \Z^d : \frac{\omega}{|\omega|} \cdot i \le 0 \right\} \subset \bigg\{ i \in \Z^d : u^{(n)}_i = 1 \bigg\} \subset \left\{ i \in \Z^d : \frac{\omega}{|\omega|} \cdot i \le M_0 n  \right\},
\end{equation}
for some constant~$M_0 > 0$ independent of~$n$.

But then, Theorem~\ref{Ising2KPerthm} implies that a subsequence of the 
extensions~$\bar{u}^{(n)} = \bar{u}^{(n)}_{\varepsilon_n}$ of the~$u^{(n)}$'s, 
as given by~\eqref{barudef}, converges in~$L^1_\loc$ and a.e.~in~$\R^d$ 
to the characteristic function~$\chi_{E_\omega}$ of a class~A minimal 
surface~$E_\omega \subseteq \R^d$ for~$\Per_K$. 
Also, it can be readily checked from definition~\eqref{barudef} that inclusion~\eqref{uepsplanelike} implies the analogous
$$
\left\{ x \in \R^d : \frac{\omega}{|\omega|} \cdot x \le -M_0 \right\} \subset \bigg\{ x \in \R^d : \bar{u}^{(n)} = 1 \bigg\} \subset \left\{ x \in \R^d : \frac{\omega}{|\omega|} \cdot x \le M_0 \right\},
$$
up to possibly taking a larger~$M_0$, still independent of~$\varepsilon$.
Hence, this and the convergence of the~$\bar{u}^{(n)}$'s establish the validity of the planelike condition~\eqref{Eplanelike} for the set~$E_\omega$.

The proof of Theorem~\ref{PL4PerKthm} is therefore complete.
\end{proof}

\section{From the~$K$-perimeter to the Ising model. Proof
of Theorem~\ref{Ising2KPerconvthm}}\label{YUI:ASDA:2}

In this section we prove Theorem~\ref{Ising2KPerconvthm}. 

Similarly to what we did in the proof of Theorem~\ref{Ising2KPerthm}, for any~$n \in \N$ we consider the (almost) partition of~$\R^d$ into the family
\begin{equation} \label{Qndef}
\QQ_n := \left\{ \mathring{Q}_{\varepsilon_n / 2}(\varepsilon_n i) : i \in \Z^d \right\},
\end{equation}
and we divide it into the two disjoint subfamilies
$$
\GG_n := \Big\{ Q \in \QQ_n : Q \subset E \Big\} \quad \mbox{and} \quad \QQ_n \setminus \GG_n.
$$
Write
$$
G_n := \bigcup_{Q \in \GG_n} Q,
$$
and notice that~$G_n \subseteq E$. We then define a configuration~$v^{(n)}$ by setting
$$
v_i^{(n)} := \begin{cases}
1 & \quad \mbox{if } Q_{\varepsilon / 2}(\varepsilon i) \in \GG_n, \\
-1 & \quad \mbox{if } Q_{\varepsilon / 2}(\varepsilon i) \in \QQ_n \setminus \GG_n,
\end{cases}
$$
and denote by~$\bar{v}^{(n)} = \bar{v}^{(n)}_{\varepsilon_n}$ its extension to~$\R^d$, as in~\eqref{barudef}. Note that~$\bar{v}^{(n)} = \chi_{G_n} - \chi_{\R^d \setminus G_n}$. We claim that
\begin{equation} \label{vbarntoE}
\bar{v}^{(n)} \longrightarrow \chi_E - \chi_{\R^d \setminus E} \quad \mbox{a.e.~in } \R^d, \mbox{ as } n \rightarrow +\infty.
\end{equation}
Indeed, since~$G_n \subset E$ for any~$n \in \N$ and~$E$ is open by hypothesis, we have that~$\chi_{E \Delta G_n} \rightarrow 0$ a.e. in~$\R^d$, as~$n \rightarrow +\infty$. Hence,~\eqref{vbarntoE} follows.

Let now
$$
\ell_n := \left\lceil \frac{R}{\varepsilon_n} \right\rceil,
$$
and set
$$
R_n := \left( \ell_n + \frac{1}{2} \right) \varepsilon_n.
$$
Clearly,~$R \le R_n \le R + 2 \varepsilon_n$, so that~$R_n \rightarrow R$, as~$n \rightarrow +\infty$.

We consider the minimizer~$u^{(n)}$ for~$H^{(n)}$ in~$Q_{\ell_n}$, with datum~$v^{(n)}$ outside of~$Q_{\ell_n}$, that is a configuration~$u^{(n)}$ for which
$$
H^{(n)}_{Q_{\ell_n}}(u^{(n)}) \le H^{(n)}_{Q_{\ell_n}}(w) \quad \mbox{for any configuration } w \mbox{ such that } w_i = v^{(n)}_i \mbox{ for any } i \in \Z^d \setminus Q_{\ell_n}.
$$
As in~\eqref{Eudef}, we associate to each~$u^{(n)}$ the set
$$
E_n := \bigcup_{i \in \{ u^{(n)}_i = 1 \}} Q_{\varepsilon_n / 2}(\varepsilon_n i).
$$
By arguing as for Lemma~\ref{Eepsuniflem}, 
we use the uniform Hamiltonian estimate given by 
Proposition~\ref{enestprop} (in its refined form~\eqref{enestbis}) and the 
identity~\eqref{PerHamrelation} to obtain that
$$
\Per_K(E_n; Q_R) \le \Per_K(E_n; Q_{R_n}) \le C_1 R_n^{d - s} \le C_2 R^{d - s},
$$
for some constants~$C_2 \ge C_1 \ge 1$ independent of~$n$ (and~$R$). 
By this, we may then extract a subsequence~$\{n_k\}$ in such a way 
that~$\chi_{E_{n_k}}$ converges a.e.~in~$Q_R$ to~$\chi_{\widetilde{E}}$, 
for some measurable set~$\widetilde{E} \subseteq Q_R$, as~$k\to+infty$.

Set now
$$
\widehat{E} := \widetilde{E} \cup \left( E \setminus Q_R \right).
$$
By~\eqref{vbarntoE} and the definition of~$\widetilde{E}$, we see that
$$
\bar{u}^{(n_k)} = \chi_{E_{n_k}} - \chi_{\R^d \setminus E_{n_k}} \longrightarrow \chi_{\widehat{E}} - \chi_{\R^d \setminus \widehat{E}} \quad \mbox{a.e.~in } \R^d, \mbox{ as } k \rightarrow +\infty,
$$
where~$\bar{u}^{(n)} = \bar{u}^{(n)}_{\varepsilon_n}$ denotes 
as usual the extension of~$u^{(n)}$ to~$\R^d$ as of definition~\eqref{barudef}. 
Moreover, by arguing as in Section~\ref{Ising2KPersec}, one checks that 
the set~$\widehat{E}$ is a minimizer for~$\Per_K$ in~$Q_R$. 
But then, since~$E$ is a strict minimizer 
and~$\widehat{E} \setminus Q_R = E \setminus Q_R$, we 
conclude that~$\widehat{E} = E$, and so Theorem~\ref{Ising2KPerconvthm} follows.

\section{The~$\Gamma$-convergence result. Proof of Theorem~\ref{Gammathm}} \label{Gammasec}

In this section we show Theorem~\ref{Gammathm}. For this, notice that
the~$\Gamma$-$\liminf$ inequality is a trivial consequence of Fatou's Lemma.

We can also easily check the validity of the third statement by applying the compact fractional Sobolev embedding (see e.g.~\cite[Corollary~7.2]{DNPV12}) and recalling definition~\eqref{GKdef}.

The proof of the~$\Gamma$-$\limsup$ inequality is slightly more involved. 
To begin with, observe that we may restrict ourselves to assuming that~$\G_K(u; \Omega) < +\infty$ and thus that~$u = \chi_E - \chi_{R^d \setminus E}$ in~$\Omega$, for some measurable set~$E \subseteq \R^d$ with finite~$K$-perimeter in~$\Omega$.

We first prove the statement under the additional hypothesis that
\begin{equation} \label{Esmooth}
\begin{cases}
u = \chi_E - \chi_{\R^d \setminus E} \mbox{ in } \Omega'\\
\partial E \cap \Omega' \mbox{ is smooth}\\
u \in C^0{(\R^d \setminus \Omega')}
\end{cases}
\mbox{for some open bounded Lipschitz set } \Omega' \supset \supset \Omega.
\end{equation}

We fix~$\varepsilon > 0$ and, as in~\eqref{Qndef}, we consider the (almost) partition of~$\R^d$ given by the family
$$
\QQ_\varepsilon := \Big\{ \mathring{Q}_{\varepsilon / 2}(\varepsilon i) : i \in \Z^d \Big\}.
$$
We define the set
$$
\Omega_\varepsilon := \bigcup_{ Q \in \QQ_\varepsilon : Q \cap \Omega \ne \varnothing } Q,
$$
and, recalling~\eqref{1.23bis}, the function~$u_\varepsilon \in \XX_\varepsilon$, 
by setting for a.e.~$x \in \R^d$
$$
u_\varepsilon(x) := \inf_{Q_{\varepsilon / 2}(\varepsilon i)} u, \quad \mbox{where } i \in \Z^d \mbox{ is the only site for which } x \in \mathring{Q}_{\varepsilon / 2}(\varepsilon i).
$$
Note that~$\Omega \subseteq \Omega_\varepsilon \subset \Omega'$ for any~$\varepsilon$ sufficiently small and, consequently, that~$u_\varepsilon = \chi_{E_\varepsilon} - \chi_{\R^d \setminus E_\varepsilon}$ in~$\Omega_\varepsilon$, for some measurable set~$E_\varepsilon$.

Let now~$\{ \varepsilon_n \}_{n \in \N} \subset (0, 1)$ be any infinitesimal sequence for which
\begin{equation} \label{uepsnselect}
\limsup_{\varepsilon \rightarrow 0^+} \G_K^{(\varepsilon)} (u_\varepsilon; \Omega) = \lim_{n \rightarrow +\infty} \G_K^{(\varepsilon_n)}(u_{\varepsilon_n}; \Omega).
\end{equation}
Thanks to the regularity assumptions on~$E$ and~$u$, we see that~$u_{\varepsilon_n} \rightarrow u$ a.e.~in~$\R^d$ and thus in~$L^1_\loc(\R^d)$, as~$n \rightarrow +\infty$. Furthermore, by arguing as for~\eqref{Bnest} we can strengthen such convergence inside~$\Omega$ and obtain that
$$
|(E_{\varepsilon_n} \Delta E) \cap \Omega| \le C \varepsilon_n^s,
$$
for some constant~$C > 0$ independent of~$n$. As in the proof of Lemma~\ref{GtoFlem}, from this we then easily deduce
\begin{equation} \label{Kuepsnin}
\lim_{n \rightarrow +\infty} \K_K(u_{\varepsilon_n}; \Omega, \Omega) = \K_K(u; \Omega, \Omega).
\end{equation}
On the other hand, by Lemma~\ref{KL1lem} we may use the Lebesgue's Dominated Convergence Theorem to get that
$$
\lim_{n \rightarrow +\infty} \K_K(u_{\varepsilon_n}; \Omega, \R^d \setminus \Omega) = \K_K(u; \Omega, \R^d \setminus \Omega).
$$
By combining this with~\eqref{Kuepsnin} and~\eqref{uepsnselect}, we conclude that the~$\Gamma$-$\limsup$ inequality holds true under hypothesis~\eqref{Esmooth}.

To finish the proof, we show that the~$\Gamma$-$\limsup$ inequality may be proved without assuming~\eqref{Esmooth}. Recall that~$u \in \XX$ is such that~$\G_K(u; \Omega) < +\infty$ and~$u = \chi_E - \chi_{\R^d \setminus E}$ in~$\Omega$, for some measurable~$E \subset \R^d$.

We first apply Lemma~\ref{smoothapprlem} to obtain\footnote{To be extremely precise, Lemma~\ref{smoothapprlem} gives a sequence of sets~$\{ \widetilde{E}_k \}_{k \in \N}$ with smooth boundaries such that
$$
\left| [( \widetilde{E}_k \cap \Omega ) \cup (E \setminus \Omega) ] \Delta E \right| \rightarrow 0 \quad \mbox{and} \quad \Per_K \left( ( \widetilde{E}_k \cap \Omega ) \cup (E \setminus \Omega); \Omega \right) \rightarrow \Per_K(E; \Omega), \quad \mbox{as } k \rightarrow +\infty.
$$
Then, it is not hard to check that the sets~$E_k := (\widetilde{E}_k \cap \Omega_{1/k}) \cup (E \setminus \Omega_{1/k})$ fulfill~\eqref{Enprop1} and~\eqref{Enprop2}.} a sequence of measurable sets~$\{ E_k \}_{k \in \N}$ that satisfy
\begin{equation} \label{Enprop1}
\partial E_k \cap \Omega_{1/k} \mbox{ is smooth}, \quad E_k \setminus \Omega_{1/k} = E \setminus \Omega_{1/k}, \quad \lim_{k \rightarrow +\infty} |E_k \Delta E| = 0,
\end{equation}
where, for any~$t \ge 0$, we set~$\Omega_t := \{ x \in \R^d : \dist(x, \Omega) \le t \}$, and
\begin{equation} \label{Enprop2}
\lim_{k \rightarrow +\infty} \Per_K(E_k; \Omega) = \Per_K(E; \Omega).
\end{equation}

Next, we consider a sequence~$\{ \varphi_k \}_{k \in \R^d} \subset C^0(\R^d \setminus \Omega)$ such that~$\varphi_k \rightarrow u$ a.e.~in~$\R^d \setminus \Omega$, as~$k \rightarrow +\infty$. Note that, to obtain such approximating sequence, one may argue as follows. Fix~$N \in \N$ in such a way that~$\Omega_1 \subset B_N$. Set~$F_0 := B_N \setminus \Omega$ and~$F_j := B_{N + j} \setminus B_{N + j - 1}$, if~$j \in \N$. For any fixed~$j \in \N \cup \{ 0 \}$, we can find a sequence of functions~$\{ \varphi_k^{(j)} \}_{k \in \N} \subset C^\infty_0(F_j)$ such that~$\varphi_k^{(j)} \rightarrow u$ in~$L^1(F_j)$, as~$k \rightarrow +\infty$. We then define
$$
\varphi_k(x) := \sum_{j = 0}^{+\infty} \chi_{F_j}(x) \varphi_k^{(j)}(x) \quad \mbox{for any } x \in \R^d \setminus \Omega.
$$
Up to a subsequence, the sequence~$\{ \varphi_k \}$ has the desired convergence properties.

For any~$x \in \R^d$, we define
$$
u^{(k)}(x) := \begin{cases}
\chi_{E_k}(x) - \chi_{\R^d \setminus E_k}(x) & \quad \mbox{if } x \in \Omega_{1 / k},\\
\varphi_k & \quad \mbox{if } x \in \R^d \setminus \Omega_{1 / k}.
\end{cases}
$$
Observe that
$$
u^{(k)} \rightarrow u \mbox{ a.e.~in } \R^d \quad \mbox{and} \quad \K_K(u^{(k)}; \Omega) \rightarrow \K_K(u; \Omega), \quad \mbox{ as } k \rightarrow +\infty.
$$
These facts are true thanks to~\eqref{Enprop1},~\eqref{Enprop2}, 
the definition of~$u^{(k)}$ and an application of the Lebesgue's Dominated Convergence Theorem together with Lemma~\ref{KL1lem}.

Moreover, each~$u^{(k)}$ satisfies assumption~\eqref{Esmooth}. Hence, for any~$\varepsilon > 0$ we deduce the existence of~$u_\varepsilon^{(k)} \in \XX_\varepsilon$ such that~$u_\varepsilon^{(k)} \rightarrow u^{(k)}$ a.e.~in~$\R^d$ and~$\K_K(u_\varepsilon^{(k)}; \Omega) \rightarrow \K_K(u^{(k)}; \Omega)$, as~$\varepsilon \rightarrow 0^+$. More precisely, we can find a strictly decreasing, infinitesimal sequence~$\{ \varepsilon_k \}_{k \in \N}$ of positive numbers such that
\begin{equation} \label{dandK<1/k}
d_{L^1_\loc}(u_\varepsilon^{(k)}, u^{(k)}) + \left| \K_K(u_\varepsilon^{(k)}; \Omega) - \K_K(u^{(k)}; \Omega) \right| < \frac{1}{k} \quad \mbox{for any } \varepsilon \in (0, \varepsilon_k], \, k \in \N,
\end{equation}
where~$d_{L^1_\loc}$ is some metric on~$L^1_\loc(\R^d)$ inducing the standard~$L^1_\loc$ topology, e.g.
$$
d_{L^1_\loc}(v, w) := \sum_{j = 1}^{+\infty} \frac{1}{2^j} \frac{\| v - w \|_{L^1(B_j)}}{1 + \| v - w \|_{L^1(B_j)}} \quad \mbox{for any } v, w \in L^1_\loc(\R^d).
$$

For~$\varepsilon \in (0, \varepsilon_1]$, we set
$$
u_\varepsilon := u_\varepsilon^{(k)} \quad \mbox{where } k \in \N \mbox{ is the only integer for which } \varepsilon_{k + 1} < \varepsilon \le \varepsilon_k.
$$
Clearly,~$u_\varepsilon \in \XX_\varepsilon$. Moreover,~$u_\varepsilon \rightarrow u$ in~$L^1_\loc(\R^d)$ and~$\K_K(u_\varepsilon; \Omega) \rightarrow \K_K(u; \Omega)$, as~$\varepsilon \rightarrow 0^+$. Indeed, given any~$\delta > 0$, we may select~$k = k_\delta \in \N$ large enough to have
\begin{equation} \label{kdeltachoice}
d_{L^1_\loc}(u^{(j)}, u) < \frac{\delta}{2}, \quad | \K_K(u^{(j)}; \Omega) - \K_K(u; \Omega) | < \frac{\delta}{2} \quad \mbox{and} \quad \frac{1}{j} < \frac{\delta}{2} \quad \mbox{for any } j \ge k.
\end{equation}
Let now~$\varepsilon \le \varepsilon_k$ and select the only integer~$j \ge k$ for which~$\varepsilon \in (\varepsilon_{j + 1}, \varepsilon_j]$.
By combining~\eqref{kdeltachoice} with~\eqref{dandK<1/k}, we conclude that
$$
d_{L^1_\loc}(u_\varepsilon, u) = d_{L^1_\loc}(u_\varepsilon^{(j)}, u) \le d_{L^1_\loc}(u_\varepsilon^{(j)}, u^{(j)}) + d_{L^1_\loc}(u^{(j)}, u) < \frac{1}{j} + \frac{\delta}{2} < \delta,
$$
and, analogously,
$$
| \K_K(u_\varepsilon; \Omega) - \K_K(u; \Omega) | \le | \K_K(u_\varepsilon^{(j)}; \Omega) - \K_K(u^{(j)}; \Omega) | + | \K_K(u^{(j)}; \Omega) - \K_K(u; \Omega) | < \delta.
$$

This concludes the proof of the~$\Gamma$-$\limsup$ inequality and, hence, of Theorem~\ref{Gammathm}.

\appendix

\section{Proof of Lemma~\ref{smoothapprlem}} \label{appA}

In the present appendix, we provide a proof of Lemma~\ref{smoothapprlem} in full details. As mentioned right after its statement in Section~\ref{Ising2KPersec}, our argument is based on the strategies already followed in e.g.~\cite{G84,CSV16,L16}.

Throughout the section, we implicitly suppose conditions~\eqref{Ksymm} and~\eqref{Kbounds} to be in force. Although the result may in fact hold under weaker hypotheses, we always suppose for simplicity that~$K$ satisfies both these assumptions. However, we stress that none of the steps of the proof require the periodicity hypothesis~\eqref{Kper} to be valid, that we therefore do not suppose to hold.

\medskip

After these introductory remarks, 
we may now head to the proof of Lemma~\ref{smoothapprlem}.

\begin{proof}[Proof of Lemma~\ref{smoothapprlem}]
First, notice that, by~\eqref{Kbounds} and the fact that~$F$ 
has finite~$K$-perimeter, the characteristic function~$\chi_F$ belongs to the fractional 
Sobolev space~$W^{s, 1}(\Omega)$. Hence, by standard density 
results (see e.g.~\cite[Theorem~1.4.2.1]{G85}), there exists a 
sequence~$\{ \varphi_n \}_{n \in \N} \subset W^{s, 1}(\Omega) \cap 
C^\infty(\overline{\Omega})$ such that
\begin{equation}\label{display}
{\mbox{$\varphi_n \rightarrow \chi_F$ in~$W^{s, 1}(\Omega)$, as~$n \rightarrow +\infty$.}}
\end{equation} 
By using again~\eqref{Kbounds}, this ensures that
\begin{equation} \label{phiKconv}
\lim_{n \rightarrow +\infty} \K_K(\varphi_n; \Omega, \Omega) = \K_K(\chi_F; \Omega, \Omega).
\end{equation}

For~$t \in (0, 1)$, we let
$$
F_n := \left( \{ \varphi_n > t \} \cap \overline{\Omega} \right) \cup (F \setminus \overline{\Omega}).
$$
Clearly,~$F_n \setminus \overline{\Omega} = F \setminus \overline{\Omega}$, 
which proves~\eqref{seconda}.

Also, Morse-Sard's Theorem tells that, for a.e.~$t \in (0, 1)$, 
the boundary of the level set~$\{ \varphi_n > t \}$ is a smooth hypersurface. 
Hence~$\partial F_n$ is smooth inside~$\overline{\Omega}$, which gives~\eqref{prima}.

We now claim that for a.e.~$t \in (0, 1)$ fixed,
\begin{equation} \label{FntoFinOmega}
\lim_{n \rightarrow +\infty} |F_n \Delta F| = 0,
\end{equation}
and
\begin{equation} \label{PerFntoPerF}
\lim_{n \rightarrow +\infty} \Per_K(F_n; \Omega) = \Per_K(F; \Omega),
\end{equation}
up to a subsequence, that is~\eqref{terza} and~\eqref{quarta}, respectively.

We begin by checking~\eqref{FntoFinOmega}. Let~$\tau \in (0, 1)$ and notice that
$$
\begin{aligned}
\varphi_n - \chi_F & > \tau \quad & \mbox{in } (\{ \varphi_n > \tau \} \setminus F) 
\cap \Omega\\
{\mbox{and }}\quad \chi_F - \varphi_n & \ge 1 - \tau \quad & \mbox{in } (F \setminus \{ \varphi_n > \tau\}) \cap \Omega.
\end{aligned}
$$
From this, we deduce that
\begin{align*}
\| \varphi_n - \chi_F \|_{L^1(\Omega)} & \ge \int_{(\{ \varphi_n > \tau \} \setminus F) \cap \Omega} (\varphi_n(x) - \chi_F(x)) \, dx + \int_{(F \setminus \{ \varphi_n > \tau \}) \cap \Omega} (\chi_F(x) - \varphi_n(x)) \, dx \\
& \ge \tau |(\{ \varphi_n > \tau \} \setminus F) \cap \Omega| + (1 - \tau ) |(F \setminus \{ \varphi_n > \tau \}) \cap \Omega| \\
& \ge \min \{ \tau, 1 - \tau \} \, |(\{ \varphi_n > \tau\} \Delta F) \cap \Omega|.
\end{align*}
Therefore, using this and~\eqref{display},
\begin{equation} \label{L1conv}
\{ \varphi_n > \tau \} \longrightarrow F \quad \mbox{in } L^1(\Omega), \mbox{ for a.e.~} \tau \in (0, 1).
\end{equation}
Claim~\eqref{FntoFinOmega} follows as a particular case by taking~$\tau = t$ in formula~\eqref{L1conv} above and recalling that~$F_n \setminus \overline{\Omega} = F \setminus \overline{\Omega}$.

Next, we address the convergence of the perimeters stated in~\eqref{PerFntoPerF}. Thanks to~\eqref{L1conv} and Lemma~\ref{semicontlem}, we have
$$
\L_K(F \cap \Omega, \Omega \setminus F) \le \liminf_{n \rightarrow +\infty} 
\L_K(\{ \varphi_n > \tau \} \cap \Omega, \Omega \setminus \{ \varphi_n > \tau \}) 
\quad \mbox{for a.e.~} \tau \in (0, 1),
$$
or, equivalently,
\begin{equation} \label{Klevel}
\K_K(\chi_F; \Omega, \Omega) \le \liminf_{n \rightarrow +\infty} 
\K_K(\chi_{\{ \varphi_n > \tau \}}; \Omega, \Omega) \quad \mbox{for a.e.~} \tau \in (0, 1).
\end{equation}
By applying, in sequence,~\eqref{phiKconv}, the generalized Coarea Formula of Lemma~\ref{coarealem}, Fatou's Lemma and~\eqref{Klevel}, we compute
\begin{eqnarray*}
&& \K_K(\chi_F; \Omega, \Omega)  = \lim_{n \rightarrow +\infty} \K_K(\varphi_n; \Omega, \Omega) = \lim_{n \rightarrow +\infty} \int_{-\infty}^{+\infty} \K_K(\chi_{\{\varphi_n > \tau \}}; \Omega, \Omega) \, d\tau \\
&&\qquad  \ge \int_0^1 \liminf_{n \rightarrow +\infty} \K_K(\chi_{\{\varphi_n > \tau \}}; \Omega, \Omega) \, d\tau \ge \int_0^1 \K_K(\chi_F; \Omega, \Omega) \, d\tau 
= \K_K(\chi_F; \Omega, \Omega).
\end{eqnarray*}
By this and, again,~\eqref{Klevel} we conclude that
$$
\liminf_{n \rightarrow +\infty} \K_K(\chi_{\{\varphi_n > \tau \}}; \Omega, \Omega) =
 \K_K(\chi_F; \Omega, \Omega) \quad \mbox{for a.e.~} \tau \in (0, 1),
$$
and thence
\begin{equation} \label{innerconv}
\lim_{n \rightarrow +\infty} \L_K(F_n \cap \Omega, \Omega \setminus F_n) = \L_K(F \cap \Omega, \Omega \setminus F).
\end{equation}
On the other hand, we claim that
\begin{equation} \label{outerconv}
\begin{aligned}
& \lim_{n \rightarrow +\infty} \L_K(F_n \setminus \Omega, \Omega \setminus F_n) 
= \L_K(F \setminus \Omega, \Omega \setminus F)\\
\mbox{and} \quad & \lim_{n \rightarrow +\infty} \L_K(F_n \cap \Omega, \R^d \setminus (F_n \cup \Omega)) = \L_K(F \cap \Omega, \R^d \setminus (F \cup \Omega)),
\end{aligned}
\end{equation}
up to subsequences. To check the validity of~\eqref{outerconv}, 
we first notice that, by~\eqref{FntoFinOmega},~$\chi_{F_n} \rightarrow \chi_F$ 
a.e.~in~$\R^d$ (up to extracting a subsequence), as~$n\to+\infty$. 
Therefore, in view of Lemma~\ref{KL1lem} we may apply the Lebesgue's Dominated Convergence Theorem to get
\begin{align*}
\lim_{n \rightarrow +\infty} \L_K(F_n \setminus \Omega, \Omega \setminus F_n) & = \lim_{n \rightarrow +\infty} \int_{\Omega} \chi_{\R^d \setminus F_n}(x) \left( \int_{\R^d \setminus \Omega} \chi_{F_n}(y) K(x, y) \, dy \right) dx \\
& = \int_{\Omega} \chi_{\R^d \setminus F}(x) \left( \int_{\R^d \setminus \Omega} \chi_{F}(y) K(x, y) \, dy \right) dx \\
& = \L_K(F \setminus \Omega, \Omega \setminus F),
\end{align*}
and similarly for the limit on the second line of~\eqref{outerconv}. The combination of~\eqref{innerconv} and~\eqref{outerconv} yields the convergence of the~$K$-perimeters claimed in~\eqref{PerFntoPerF}.

The proof of Lemma~\ref{smoothapprlem} is thus finished.
\end{proof}

\section{Optimality of the width of the strip given in~\eqref{OTTIMA}}\label{OTTIMA:APP}

The goal of this appendix is to show that, for large values of the periodicity
scale~$\tau$, the interface of the planelike ground states for powerlike
interactions, as in~\eqref{Jpowerlike}, oscillates, in general,
by a quantity proportional
to~$\tau$ (i.e., the conclusion in~\eqref{OTTIMA} of Theorem~\ref{mainthm}
cannot be improved).\medskip

Of course, one needs to construct an ad-hoc example to check
this optimality.
The idea to construct this counterexample comes from
similar phenomena in minimal surfaces and minimal foliations,
in which the oscillation is produced by the
fact that the metric is nonflat. For simplicity, we present here a two-dimensional
explicit example, which goes as follows.\medskip

Given~$\tau\in4\N+1$ (to be taken large in the subsequent construction),
we define
\begin{eqnarray*}
&& Q:=\left\{-\frac{\tau-1}{2},\dots,0,\dots,\frac{\tau-1}{2}\right\}^2,\\
&&\widehat Q:=\left\{-\frac{\tau-1}{4},\dots,0,\dots,\frac{\tau-1}{4}\right\}^2\end{eqnarray*}
and
$$\widehat{\mathcal{Q}} :=\left\{
(i,j)\in  \Z^2\times\Z^2
{\mbox{ s.t. there exists }}(i',j')\in 
\widehat Q\times\widehat Q 
{\mbox{ for which }} i-i'=j-j'\in\tau\Z^2
\right\}.$$
We set
$$ J_{ij}:=
\begin{cases}
\displaystyle\frac{\Lambda}{|i-j|^{2+s}}
& {\mbox{ if $(i,j)\in\widehat{\mathcal{Q}}$ \mbox{ and } $i \ne j$}}\\
\quad \, \, \, \, 0 & {\mbox{ if $i = j$,}} \\
\displaystyle\frac{1}{|i-j|^{2+s}} &{\mbox{ otherwise,}}
\end{cases}
$$
for $\Lambda>1$ (to be chosen conveniently large in the sequel).
\medskip

We claim that any planelike ground state with rationally independent
slope~$\omega\in\R^2$ (with~$\omega\cdot n\ne0$ for any~$n
\in\Z^2$)
for the Hamiltonian
associated to this case with vanishing magnetic field (i.e.~$h_i:=0$
in~\eqref{Bdef}) 
possesses oscillations of order~$\tau$, for large~$\tau$.\medskip

For this, we argue by contradiction and suppose that~\eqref{interinc}
holds true with~$M=M(\tau)$ sublinear in~$\tau$, namely
there exists a ground state~$u=u_{\omega,\tau}$ 
such that
\begin{equation} \label{MTAU0}
\partial u \subset \left\{ i \in \Z^2 : 
\frac{\omega}{|\omega|} \cdot i \in [0, M(\tau)] \right\},
\end{equation}
and
\begin{equation} \label{MTAU}
\lim_{\tau\to+\infty}\frac{ M(\tau)}{\tau}=0.
\end{equation}
Since $\omega$ is irrational, any straight line~$r_\omega$ with direction normal to~$\omega$
will get arbitrarily close to~$\tau\Z^2$. In particular, up to a translation,
we may assume that the origin lies in a $\frac{\tau}{64}$-neighborhood
of~$r_\omega$,
and, from~\eqref{MTAU0} and \eqref{MTAU},
we can write
\begin{equation}\label{isdjcb}
\begin{split} 
&u_{i}=-1 {\mbox{ for any $i$ for which }}\frac{\omega}{|\omega|}
\cdot i\ge \frac{\tau}{32}\\
{\mbox{and }}
\quad&u_{i}=1 {\mbox{ for any $i$ for which }}\frac{\omega}{|\omega|}
\cdot i\le- \frac{\tau}{32},\end{split}\end{equation}
as long as~$\tau$ is large enough.

We now reach a contradiction with the minimality of~$u$ by constructing
a suitable competitor~$v$ with less energy. To this aim, we define
$$ v_i:=
\begin{cases}
-1 &{\mbox{ if }}i\in \widehat Q,\\
u_i & {\mbox{ otherwise}}.
\end{cases}
$$
Since~$u$ is supposed to be minimal, we have that
\begin{equation}\label{8quwyedfgojskdbcn}
\begin{split}
0 \,&\ge H_{\widehat Q}(u)-H_{\widehat Q}(v)=
\sum_{(i,j)\in\Z^4\setminus(\Z^2\setminus\widehat Q)^2 }
J_{ij} (v_iv_j -u_iu_j)\\
&=\Lambda
\sum_{i,j\in\widehat Q} \frac{1-u_i u_j}{|i-j|^{2+s}}-2
\sum_{{i\in\widehat Q}\atop{j\not\in\widehat Q }} \frac{(1+u_i)\, u_j}{|i-j|^{2+s}}\\
&\ge 
4\Lambda
\sum_{{i,j\in\widehat Q}\atop{
{\{u_i=1\}}\atop{\{u_j=-1\}}} }\frac{1}{|i-j|^{2+s}}-4
\sum_{{i\in\widehat Q}\atop{j\not\in\widehat Q }} \frac{1}{|i-j|^{2+s}}.
\end{split}
\end{equation}
Now, from~\eqref{isdjcb}, we know that the number of sites~$i\in\widehat Q$
for which~$u_i=1$ is at least of the order~$c\tau^2$,
and similarly that the number of sites~$j\in\widehat Q$
for which~$u_j=-1$ is at least of the order~$c\tau^2$, with $c>0$ universal.
Consequently, we have that
\begin{equation}\label{8quwyedfgojskdbcn:2}
\sum_{{i,j\in\widehat Q}\atop{
{\{u_i=1\}}\atop{\{u_j=-1\}}} }\frac{1}{|i-j|^{2+s}}
\ge \frac{c'\,\tau^4}{\tau^{2+s}}=c'\,\tau^{2-s},
\end{equation}
for some~$c'>0$.
On the other hand, using the index~$k:=j-i$,
\begin{eqnarray*}
&& \sum_{{i\in\widehat Q}\atop{j\not\in\widehat Q }} \frac{1}{|i-j|^{2+s}}\le C\,
\sum_{{|i|_\infty\le\frac{\tau-1}2}\atop{|j|_\infty\ge\frac{\tau+1}2 }} 
\frac{1}{|i-j|_\infty^{2+s}}\le C\,
\sum_{{|i|_\infty\le\frac{\tau-1}2}\atop{|k|_\infty\ge\frac{\tau+1}2 -|i|_\infty}} 
\frac{1}{|k|_\infty^{2+s}} 
\\ &&\qquad \le C'\,
\sum_{|i|_\infty\le\frac{\tau-1}2}
\left(\frac{\tau+1}2 -|i|_\infty \right)^{-s}
=
C''\,
\sum_{\ell=0}^{\frac{\tau-1}2}
\left(\frac{\tau+1}2 -\ell \right)^{-s} \,\ell
\\ &&\qquad \le
C''\,\tau\,
\sum_{\ell=0}^{\frac{\tau-1}2}
\left(\frac{\tau+1}2 -\ell \right)^{-s}
\le C'''\,\tau^{2-s},\end{eqnarray*}
for some~$C$, $C'$, $C''$, $C'''>0$.

Thus, we insert this and~\eqref{8quwyedfgojskdbcn:2}
into~\eqref{8quwyedfgojskdbcn}
and we find that
$$ 0 \ge 4\tau^{2-s}\,(c'\,
\Lambda -C'''),$$
which is a contradiction if~$\Lambda$ is sufficiently large.

\section*{Conclusions}

After a short review of the classical Ising model,
we considered in this paper a spin system with
long-range interactions. We gave rigorous proofs of three types of results:
\begin{itemize}
\item the construction of ground state solutions
whose phase separation
stays at a bounded distance from any given hyperplane,
\item the construction of nonlocal minimal 
surfaces which stay at a bounded distance from any
given hyperplane,
\item the asymptotic link between ground states
of long-range Ising models and nonlocal
minimal surfaces.\end{itemize}

\section*{Acknowledgements}
This work has been supported by the Alexander von Humboldt Foundation, the
ERC grant 277749 {\it E.P.S.I.L.O.N.} ``Elliptic
Pde's and Symmetry of Interfaces and Layers for Odd Nonlinearities'' and
the PRIN grant 201274FYK7
``Aspetti variazionali e
perturbativi nei problemi differenziali nonlineari''.

\end{document}